\newtheorem{theorem}{Theorem}[section]
\newtheorem{proposition}[theorem]{Proposition}
\newtheorem{lemma}[theorem]{Lemma}
\newtheorem{corollary}[theorem]{Corollary}
\theoremstyle{definition}
\theoremstyle{remark}
\newtheorem{remark}[theorem]{Remark}
\newtheorem{definition}[theorem]{Definition}
\newtheorem{example}[theorem]{Example}
\begin{document}

\title{\textbf{Geometric structures \\
associated with the Chern connection \\
attached to a SODE}}

\author{\textsc{J. Mu\~{n}oz Masqu\'e}\\
{\small Instituto de F\'{\i}sica Aplicada, CSIC}\\
{\small C/ Serrano 144, 28006-Madrid, Spain} \\
{\small \emph{E-mail:\/} jaime@iec.csic.es}
\and
\textsc{M. Eugenia Rosado Mar\'{\i}a}\\
{\small Departamento de Matem\'{a}tica Aplicada}\\
{\small Escuela T\'ecnica Superior de Arquitectura, UPM}\\
{\small Avda.\ Juan de Herrera 4, 28040-Madrid, Spain}\\
{\small \emph{E-mail:\/} eugenia.rosado@upm.es}}

\date{}

\maketitle

\begin{abstract}
\noindent To each second-order ordinary differential
equation $\sigma $ on a smooth manifold $M$
a $G$-structure $P^\sigma $ on $J^1(\mathbb{R},M)$
is associated and the Chern connection $\nabla ^\sigma $
attached to $\sigma $ is proved to be reducible to
$P^\sigma $; in fact, $P^\sigma $ coincides generically
with the holonomy bundle of $\nabla ^\sigma $.
The cases of unimodular and orthogonal holonomy
are also dealt with. Two characterizations of the Chern
connection are given: The first one in terms
of the corresponding covariant derivative and
the second one as the only principal connection
on $P^\sigma $ with prescribed torsion tensor field.
The properties of the curvature tensor field
of $\nabla ^\sigma $ in relationship to the existence
of special coordinate systems for $\sigma $ are
studied. Moreover, all the odd-degree characterictic
classes on $P^\sigma $ are seen to be exact
and the usual characteristic classes induced
by $\nabla ^\sigma $ determine the Chern classes
of $M$. The maximal group of automorphisms
of the projection
$p\colon \mathbb{R}\times M\to \mathbb{R}$ with respect
to which $\nabla ^\sigma $ has a functorial behaviour,
is proved to be the group of $p$-vertical automorphisms.
The notion of a differential invariant under such a group
is defined and stated that second-order differential
invariants factor through the curvature mapping;
a structure is thus established for KCC theory.
\end{abstract}

\noindent \emph{Mathematics Subject Classification 2010:\/}
Primary: 53B05, 
Secondary: 53A55, 
58A20, 
58A32, 
53C05, 
53C10, 
53C29. 

\smallskip

\noindent \emph{Key words and phrases:\/}
Chern connection, $G$-structure, linear frame,
diffeomorphism invariance, jet bundle,
second-order ordinary differential equation
(SODE).

\smallskip

\noindent \emph{Acknowledgements:\/}
Supported by Ministerio of Ciencia
e Innovaci\'on of Spain, under grant
\#MTM2008--01386.

\section{Introduction}

S.-S. Chern introduced several
connections---currently referred
to as `Chern connections'---in
different geometric settings; notably,
in: i) the theory of second-order
ordinary differential equations
\cite{Chern} in the context
of the correspondence between Cartan
and Kosambi (cf.\ \cite{Cartan},
\cite{Kosambi}), ii) Finsler geometry
(see \cite{Chern3} and the references
therein), iii) $3$-web geometry
\cite{Chern2}, and iv) almost Hermitian
manifolds \cite{Chern2.5} (the second
canonical connection, cf.\ \cite{Lich}).
These connections usually play a role
in modern research on such topics; e.g.,
see \cite{Anastasiei}, \cite{AB},
\cite{BaoChernShen}, \cite{DGM},
\cite{Muzsnay}, \cite{Nakai},
among others.

Due to their importance, Chern classes
have eclipsed in part the rest of the works
of Chern (cf.\ \cite{Wu}); in particular,
this happens with the connections above.

In this paper, we consider exclusively
the Chern connection of item i).

Let $M$ be a connected $C^\infty $ manifold
with natural projections
$p\colon \mathbb{R}\times M\to \mathbb{R}$
and $p^\prime \colon \mathbb{R}\times M\to M$.
The bundle of $r$-jets of smooth maps from
$\mathbb{R}$ into $M$ is denoted by
$p^r\colon M^r=J^r(\mathbb{R},M)\to \mathbb{R}$,
with natural projections $p^{rs}\colon M^r\to M^s$
for $r>s$, and the $r$-jet prolongation of the curve
$\gamma \colon \mathbb{R}\to M$ is denoted by
$j^r\gamma \colon \mathbb{R}\to M^r$. Every coordinate
system $(x^i)$, $1\leq i\leq n=\dim  M$, on $M$ induces
a coordinate system $(t,x^i;x^j_k)$, $i,j=1,\dotsc,n$,
$0\leq k\leq r$ on $M^r$ as follows:
\[
x^j_k(j^r_{t_0}\gamma )
=\frac{d^k(x^j\circ \gamma )}{dt^k}(t_0),
\quad x^j_0=x^j.
\]
Below, however, for first and second orders
we use the more usual `dot' notation; namely,
$\dot{x}^j=x^j_1$, and $\ddot{x}^j=x^j_2$.

A second-order ordinary differential equation
\begin{equation}
\label{SODE}
\ddot{x}^i=F^i
\left(
t,x^i,\dot{x}^i
\right) ,
\quad
F^i\in C^\infty (M^1),
\; 1\leq i\leq n,
\end{equation}
can be better understood as a section
$\sigma \colon M^1\to M^2$
of the map $p^{21}\colon M^2\to M^1$
by simply setting
$\ddot{x}^i\circ \sigma =F^i$.
The correspondence
$\sigma \leftrightarrow (F^i)_{i=1}^n$
is natural and bijective.

\begin{remark}
\label{remark1}
The second-order ordinary differential
equation considered in the original paper
by Chern \cite{Chern} (also see \cite{Cartan},
\cite{Kosambi}) is
$\ddot{x}^i+F^i(t,x^i,\dot{x}^i)=0$,
$1\leq i\leq n$, instead of \eqref{SODE}.
Hence, in all the formulas below, $F^i$
should be replaced by $-F^i$ in order to compare
with the formulas in \cite{Chern}.
\end{remark}

Since its introduction in
\cite{Cartan, Chern, Kosambi},
the Chern connection associated to a SODE on $M$
has been studied by several authors; e.g., see
\cite{Byrnes1, Byrnes2, CrampinMartinezSarlet, MassaPagani}.

In the sections \ref{section_G-structure},
\ref{section_characterizations} below,
the Chern connection is presented in a similar
way as the Levi-Civita connection is introduced
in Riemannian Geometry; namely,
\begin{enumerate}
\item
The notion of a linear frame of $M^1$ `adapted'
to a SODE $\sigma $ is defined (corresponding
to the notion of an orthonormal frame
in the Riemannian case).
\item
The structure of the set of such frames,
is analysed and proved to be a $G$-structure
$P^\sigma $ of the linear frames of $M^1$;
the first prolongation (cf.\ \cite{Guillemin, LL, L})
of the Lie algebra of the Lie group of $P^\sigma $
vanishes and the adjoint bundle of $P^\sigma $
is described in terms of the geometry of $\sigma $.
The Chern connection $\nabla ^\sigma $ is then
proved to be reducible to this $G$-structure.
\item
Two caracterizations of the Chern connection
are provided: The first one characterizes
the covariant derivative $\nabla ^\sigma $
as a derivation law on the tangent bundle
of $M^1$ and the second one as a principal
connection on $P^\sigma $ with prescribed
torsion tensor field. (Observe also the analogy
with the Riemannian case.)
\end{enumerate}

In the section \ref{section_curvature},
given a SODE $\sigma $ on $M$, the existence
of a fibred coordinate system $(t,x^1,\dotsc,x^n)$
for the projection $p\colon M^0\to \mathbb{R}$
such that,
\[
\ddot{x}^i\circ \sigma
=f_0^i(t,x^1,\dotsc,x^n)
+f_j^i(t,x^1,\dotsc,x^n)\dot{x}^j,
\quad 1\leq i\leq n,
\]
is given in terms of the curvature tensor
of $\nabla ^\sigma $. The particular cases
$f^i_0\equiv 0$ and $f^i_0\equiv 0$,
$f^i_j\equiv 0$ are specially considered.

The odd-degree characteristic classes on
$P^\sigma $---as defined in \cite{AM}---are
seen to be exact and the standard
characteristic classes induced by
$\nabla ^\sigma $ are shown to determine
the Chern classes of the ground manifold
$M$ by means of the natural isomorphism
$H^\bullet (M^1;\mathbb{R})
\cong H^\bullet (M;\mathbb{R})$.

In the section \ref{section_naturality}
the functorial character of the Chern connection
is studied. We confine ourselves to consider
the group of $p$-vertical automorphisms
of the submersion $p$, denoted by $\mathrm{Aut}^v(p)$
(see its precise definition at the beginning
of this section), as this subgroup is the largest
group of transformations for which the functoriality
holds. This corresponds to the ``problem (B)''
in the terminology introduced by Chern in \cite{Chern}.
Chern solves the problem of functoriality
of his connection by simply saying (cf.\
\cite[p.\ 208]{Chern}):
``Il en r\'{e}sulte que la choix $\beta ^i_k
=\frac{1}{2}\frac{\partial F^i}{\partial y^k}$
a un caract\`{e}re intrins\`{e}que.''

Finally, the section \ref{section_invariants}
is devoted to introduce the notion of a differential
invariant for SODEs with respect to the group
$\mathrm{Aut}^v(p)$. The main result states
that invariant functions factor through
the curvature mapping induced by the curvature
$K^\sigma $ of the splitting $H^\sigma $ attached
to each SODE $\sigma $ (see the formulas
\eqref{Hsigma} and \eqref{K_sigma} below),
which almost coincides with the torsion tensor
field of the Chern connection $\nabla ^\sigma $,
see the formulas \eqref{torsion}, \eqref{Torsion}.
This explains the role of the Chern connection
in KCC theory. We also remark the similarity
between the aforementioned result and the geometric
version of the Utiyama theorem (e.g., see
\cite[10.2]{Bleecker}) in gauge theories.

\section{Preliminaries}

\subsection{Dynamical flows}

Every SODE $\sigma $ defines a vector field
$X^\sigma \in \mathfrak{X}(M^1)$, called `the dynamical
flow' associated to $\sigma $ (cf.\ \cite{MassaPagani}),
as follows:
$(X^\sigma )_\xi =(j^1\gamma )_\ast (d/dt)_{t_0}$,
$\forall \xi \in (p^1)^{-1}(t_0)$, where
$\gamma ^i=x^i\circ \gamma $, $1\leq i\leq n$,
is the only solution to \eqref{SODE} satisfying
the initial conditions
$\gamma ^i(t_0)=x^i(\xi )$ and
$d\gamma ^i/dt(t_0)=\dot{x}^i(\xi )$.
The local expression of the dynamical flow is
$X^\sigma =\partial/\partial t
+\dot{x}^i\partial/\partial x^i
+F^i\partial/\partial \dot{x}^i$.

\subsection{The splitting induced
by a SODE\label{splitting}}

As is known, $p^{10}\colon M^1\to M^0
=\mathbb{R}\times M$ is an affine bundle
modelled over $p^{\prime \ast }TM$;
in fact, given $v\in T_{x_0}M$ and
$j_{t_0}^1\gamma \in M^1$,
with $\gamma (t_0)=x_0$, then
$v+j_{t_0}^1\gamma
=j_{t_0}^1\gamma ^\prime $ is defined
as follows: 1) $\gamma ^\prime (t_0)
=x_0$, and 2) $\gamma _\ast ^\prime
(d/dt)_{t_0}=v+\gamma _\ast (d/dt)_{t_0}$.
Hence, the following exact sequence
of vector bundles over $M^1$ holds:
\begin{equation}
\label{exactseq}
0\to
\left(
p^\prime \circ p^{10}
\right) ^\ast
TM\overset{\varepsilon}{\longrightarrow}V
\left( p^{10}\right) \to TM^1
\overset{(p^{10})_\ast }{-\!\!\!\!\longrightarrow }
(p^{10})^\ast TM^0\to 0,
\end{equation}
where $V\left( p^{10}\right) $ denotes the vector
subbundle of $p^{10}$-vertical vectors
and $\varepsilon $ is defined by the directional
derivative, namely
\begin{equation}
\label{epsilon}
\varepsilon
\left(
j_{t_0}^1\gamma ,v
\right) (f)
=\lim _{t\to 0}
\frac{f(tv+j_{t_0}^1\gamma )
-f(j_{t_0}^1\gamma )}{t},
\quad
v\in T_{\gamma (t_0)}M,\;f\in C^\infty (M^1).
\end{equation}
In local coordinates $\varepsilon $
is determined by
$\varepsilon (\partial/\partial x^i)
=\partial/\partial \dot{x}^i$.

Given a SODE $\sigma $, the Lie derivative
of the fundamental tensor
\begin{equation}
\label{J}
J=\omega ^i
\otimes \frac{\partial }{\partial \dot{x}^i},
\quad
\omega ^i=dx^i-\dot{x}^idt
\end{equation}
of $M^1$ (e.g., see
\cite[formula (1.13)]{MassaPagani})
along $X^\sigma $ is
\begin{align}
L_{X^\sigma }J
& =-\left(
dx^i-\dot{x}^idt
\right)
\otimes \frac{\partial }{\partial x^i}
\label{LXJ} \\
& \quad
+\left\{
\left(
\dot{x}^i
\frac{\partial F^j}{\partial \dot{x}^i}-F^j
\right)
dt-\frac{\partial F^j}
{\partial \dot{x}^i}dx^i+d\dot{x}^j
\right\}
\otimes \frac{\partial }{\partial \dot{x}^j},
\nonumber
\end{align}
and it is readily checked that $L_{X^\sigma }J$
is diagonalizable with eigenvalues $0$, $+1$, $-1$,
and multiplicities $1$, $n$, $n$, respectively
(cf.\ \cite[p. 6620]{Byrnes1}). If $T^0(M^1)$,
$T^+(M^1)$, $T^-(M^1)$ are the corresponding
vector subbundles of eigenvectors, then
\begin{align}
T(M^1) & =T^0(M^1)\oplus T^-(M^1)\oplus T^+(M^1),
\label{decomposition}\\
T^0(M^1) &
=\left\langle X^\sigma
\right\rangle ,\label{T^0}\\
T^-(M^1) &
=\left\langle X_i^\sigma
\right\rangle ,\;X_i^\sigma
=\frac{\partial }{\partial x^i}
+\tfrac{1}{2}\frac{\partial F^j}
{\partial \dot{x}^i}
\frac{\partial }{\partial \dot{x}^j},
\label{T^-}\\
T^+(M^1) &
=V\left(
p^{10}
\right)
=\left\langle
\frac{\partial }{\partial \dot{x}^1},\dotsc,
\frac{\partial }{\partial \dot{x}^n}
\right\rangle .
\label{T^+}
\end{align}
Hence, the epimorphism $(p^{10})_\ast $
in \eqref{exactseq} induces an isomorphism
\begin{equation}
(p^{10})_\ast \colon T^0(M^1)\oplus T^-(M^1)
\overset{\cong }{\longrightarrow }(p^{10})^\ast
TM^0, \label{isomorphism}
\end{equation}
whose inverse mapping determines a section
$H^\sigma \colon(p^{10})^\ast TM^0\to TM^1$
of $(p^{10})_\ast $ given by
\begin{equation}
H^\sigma =dt\otimes X^\sigma
+\omega ^i\otimes X_i^\sigma ,\label{Hsigma}
\end{equation}
and consequently, the exact sequence
\eqref{exactseq} splits; i.e.,
every tangent vector $X$ in $TM^1$
can uniquely be written as $X=X^v+X^h$,
where
\[
X^h=H^\sigma
\left(
(p^{10})_\ast X
\right) \in T^0(M^1)
\oplus T^-(M^1),
\quad
X^v=X-X^h\in V(p^{10}).
\]
In coordinates,{\small
\begin{equation}
\begin{array}
[c]{ll}
\left(
\dfrac{\partial }{\partial t}
\right) ^v
=\left(
\tfrac{1}{2}
\dot{x}^i\dfrac{\partial F^j}
{\partial \dot{x}^i}-F^j
\right)
\dfrac{\partial }{\partial \dot{x}^j},
&
\left(
\dfrac{\partial }{\partial t}
\right) ^h
=\dfrac{\partial }{\partial t}
+\left(
F^j
-\tfrac{1}{2}
\dot{x}^i
\dfrac{\partial F^j}{\partial \dot{x}^i}
\right)
\dfrac{\partial }{\partial \dot{x}^j},
\smallskip \\
\left(
\dfrac{\partial }{\partial x^i}
\right) ^v
=-\tfrac{1}{2}
\dfrac{\partial F^j}{\partial \dot{x}^i}
\dfrac{\partial }{\partial \dot{x}^j},
& \left(
\dfrac{\partial }{\partial x^i}
\right) ^h
=X_i^\sigma ,\smallskip\\
\left(
\dfrac{\partial }{\partial \dot{x}^i}
\right) ^v
=\dfrac{\partial }{\partial \dot{x}^i},
& \left(
\dfrac{\partial }{\partial \dot{x}^i}
\right) ^h=0.
\end{array}
\label{vH}
\end{equation}
}

Below we need the explicit expression
for the curvature form of the splitting
$H^\sigma $; i.e.,
\begin{equation}
\begin{array}
[c]{l}
K^\sigma \in
\bigwedge ^2
T^\ast M^1\otimes V(p^{10}),\\
K^\sigma (X,Y)=\left[ X^h,Y^h\right] ^v,
\quad \forall X,Y\in \mathfrak{X}(M^1).
\end{array}
\label{curvature_form}
\end{equation}

\begin{proposition}
\label{Prop1}The curvature form
of the splitting \emph{\eqref{Hsigma}}
is given as follows:
\begin{equation}
K^\sigma
=-\Bigl(
P_j^hdt\wedge \omega ^j
+\sum _{i<j}T_{ij}^h\omega ^i\wedge \omega ^j
\Bigr)
\otimes \frac{\partial }{\partial \dot{x}^h},
\label{K_sigma}
\end{equation}
where the $1$-form $\omega ^j$ is
introduced in the formula \emph{\eqref{J}}
and
\begin{align}
T_{ij}^k & =\tfrac{1}{2}
\left(
\frac{\partial^2F^k}
{\partial x^i\partial \dot{x}^j}
-\frac{\partial^2F^k}
{\partial x^j\partial \dot{x}^i}
+\tfrac{1}{2}
\left(
\frac{\partial F^h}{\partial \dot{x}^i}
\frac{\partial^2F^k}
{\partial \dot{x}^h\partial \dot{x}^j}
-\frac{\partial F^h}{\partial \dot{x}^j}
\frac{\partial^2F^k}
{\partial \dot{x}^h\partial \dot{x}^i}
\right)
\right) ,\label{T's}\\
P_j^i & =\tfrac{1}{2}X^\sigma
\left(
\frac{\partial F^i}{\partial \dot{x}^j}
\right)
-\frac{\partial F^i}{\partial x^j}
-\tfrac{1}{4}
\frac{\partial F^k}{\partial \dot{x}^j}
\dfrac{\partial F^i}{\partial \dot{x}^k}.
\label{P's}
\end{align}
\end{proposition}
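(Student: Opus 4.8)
The plan is to compute $K^\sigma$ directly from its definition \eqref{curvature_form}, namely $K^\sigma(X,Y)=[X^h,Y^h]^v$, by evaluating it on the natural frame $\{\partial/\partial t,\partial/\partial x^i,\partial/\partial\dot x^i\}$ of $TM^1$. Since $K^\sigma$ is tensorial, skew-symmetric, and takes values in $V(p^{10})$, and since the horizontal lifts of the $\dot x$-directions vanish by \eqref{vH}, the form is determined by its values on the pairs coming from $\{\partial/\partial t,\partial/\partial x^i\}$. Moreover, because $H^\sigma$ is a section of $(p^{10})_\ast$, the horizontal lifts project to the coordinate fields on $M^0$, so $[X^h,Y^h]$ is automatically $p^{10}$-vertical when $X,Y$ are among these base fields; hence $[X^h,Y^h]^v=[X^h,Y^h]$ and no separate projection is needed once the bracket is shown to have no $\partial/\partial t$ or $\partial/\partial x^i$ component. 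This reduces everything to two bracket computations.

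First I would write the two relevant horizontal lifts explicitly. From \eqref{T^-} and \eqref{vH},
\[
\left(\frac{\partial}{\partial x^i}\right)^{\!h}=X_i^\sigma=\frac{\partial}{\partial x^i}+\tfrac12\frac{\partial F^j}{\partial\dot x^i}\frac{\partial}{\partial\dot x^j},
\qquad
\left(\frac{\partial}{\partial t}\right)^{\!h}=\frac{\partial}{\partial t}+\Bigl(F^j-\tfrac12\dot x^i\frac{\partial F^j}{\partial\dot x^i}\Bigr)\frac{\partial}{\partial\dot x^j}.
\]
Then I would compute $[X_i^\sigma,X_j^\sigma]$; the $\partial/\partial x^i$ components cancel by symmetry, and the coefficient of $\partial/\partial\dot x^k$ collects into exactly $2T_{ij}^k$ after using $\partial\dot x^j/\partial x^i=0$ and expanding $X_i^\sigma$ acting on $\tfrac12\partial F^k/\partial\dot x^j$. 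A parallel computation of $[(\partial/\partial t)^h,X_j^\sigma]$ yields the coefficient $P_j^k$ of $\partial/\partial\dot x^k$, where the term $\tfrac12 X^\sigma(\partial F^k/\partial\dot x^j)$ arises because $X^\sigma=\partial/\partial t+\dot x^i\partial/\partial x^i+F^i\partial/\partial\dot x^i$ is precisely the combination of $\partial/\partial t$, the $\dot x^i$-weighted $\partial/\partial x^i$, and $F^i\partial/\partial\dot x^i$ that appears when one differentiates $\tfrac12\partial F^k/\partial\dot x^j$ along $(\partial/\partial t)^h$ and along the $\dot x^i$-part hidden in $X_j^\sigma$. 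Finally I would assemble these into the $2$-form: on $(\partial/\partial t,\partial/\partial x^j)$ it gives $-P_j^h\,\partial/\partial\dot x^h$ matching the $dt\wedge\omega^j$ term, and on $(\partial/\partial x^i,\partial/\partial x^j)$ with $i<j$ it gives $-T_{ij}^h\,\partial/\partial\dot x^h$ matching $\omega^i\wedge\omega^j$, once one checks that $dt$ and $\omega^i$ form the dual coframe to the horizontal lifts (so that $K^\sigma(X,Y)=K^\sigma(X^h,Y^h)$ and only these pairings survive). The bookkeeping sign comes from the overall minus in \eqref{K_sigma}.

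The main obstacle is purely the combinatorics of the Leibniz rule: keeping track of which pieces of $X^\sigma$ emerge when $(\partial/\partial t)^h$ and the vertical correction $\tfrac12(\partial F^j/\partial\dot x^i)\partial/\partial\dot x^j$ inside $X_j^\sigma$ differentiate the coefficients $\tfrac12\partial F^k/\partial\dot x^j$, and verifying that the $-\tfrac14(\partial F^k/\partial\dot x^j)(\partial F^i/\partial\dot x^k)$ cross-term in \eqref{P's} is exactly what is left over after the $X^\sigma$-derivative is isolated. There is no conceptual difficulty once the coframe duality is noted; the risk is only an arithmetic slip in the factor of $\tfrac12$ or a sign, so I would double-check the final answer by evaluating both sides of \eqref{K_sigma} on a test pair such as $(X_i^\sigma,\partial/\partial\dot x^j)$ (both must give zero, since horizontal lifts of $\dot x$-directions vanish) and on $(\partial/\partial t,\partial/\partial x^j)$.
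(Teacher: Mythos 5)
Your overall strategy---compute the brackets of the horizontal fields and read off the components against a suitable coframe---is essentially the paper's, which evaluates $[X^\sigma,X_j^\sigma]$ and $[X_i^\sigma,X_j^\sigma]$ and concludes from \eqref{vH}. However, the step on which your coefficient identification rests is false: the coframe $(dt,\omega^i,\varpi^i)$ is dual to the \emph{adapted} frame $(X^\sigma,X_i^\sigma,\partial/\partial\dot x^i)$, not to the horizontal lifts of the coordinate fields, because $\omega^i=dx^i-\dot x^i\,dt$ gives $\omega^i\bigl((\partial/\partial t)^h\bigr)=-\dot x^i\neq 0$. Consequently the pair $(\partial/\partial t,\partial/\partial x^j)$ does \emph{not} isolate the $dt\wedge\omega^j$ coefficient: writing $(\partial/\partial t)^h=X^\sigma-\dot x^iX_i^\sigma$ one finds $K^\sigma(\partial/\partial t,\partial/\partial x^j)=[(\partial/\partial t)^h,X_j^\sigma]^v=\bigl(P_j^h-\dot x^iT_{ij}^h\bigr)\partial/\partial\dot x^h$, a mixture of both families of coefficients, so your claimed value $-P_j^h\,\partial/\partial\dot x^h$ on that pair is not what the computation produces. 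To isolate $P_j^h$ you must pair with $X^\sigma$ itself, i.e.\ work in the adapted frame; this is precisely why the paper computes $[X^\sigma,X_j^\sigma]=P_j^i\,\partial/\partial\dot x^i-\tfrac12\tfrac{\partial F^k}{\partial\dot x^j}X_k^\sigma$ rather than $[(\partial/\partial t)^h,X_j^\sigma]$.

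Two further slips. First, the coefficient of $\partial/\partial\dot x^k$ in $[X_i^\sigma,X_j^\sigma]$ is exactly $T_{ij}^k$, not $2T_{ij}^k$; with the evaluation convention the paper uses elsewhere (e.g.\ when matching \eqref{Torsion} with $\mathrm{Tor}\,\nabla^\sigma$ in the proof of Theorem \ref{thm1}), the value of $K^\sigma$ on $(X_i^\sigma,X_j^\sigma)$ for $i<j$ is the bracket itself, so no factor of $2$ appears. Second, deferring the sign to ``the overall minus in \eqref{K_sigma}'' is not an argument: from the definition \eqref{curvature_form} one gets $K^\sigma(X^\sigma,X_j^\sigma)=[X^\sigma,X_j^\sigma]^v=P_j^h\,\partial/\partial\dot x^h$, so the sign must be tracked explicitly against the convention used to evaluate $dt\wedge\omega^j$ and $\omega^i\wedge\omega^j$ on the adapted frame rather than asserted at the end. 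Once you replace $(\partial/\partial t)^h$ by $X^\sigma$, fix the factor of $2$, and carry the sign through the evaluation, your argument coincides with the paper's proof.
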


\begin{remark}
\label{remark2}
The formulas \eqref{T's}, \eqref{P's}
coincide with those in \cite[formula (17)]{Chern}
after replacing $F^i$ by $-F^i$, as explained
in Remark \ref{remark1}.
\end{remark}

\begin{proof}
As a simple computation shows, the following
formulas hold:
\[
\begin{array}
[c]{ll}
\left[
X^\sigma ,X_j^\sigma
\right]
=P_j^i\dfrac{\partial }{\partial \dot{x}^i}
-\tfrac{1}{2}
\dfrac{\partial F^k}{\partial \dot{x}^j}
X_k^\sigma ,
&
\left[
X^\sigma ,
\dfrac{\partial }{\partial \dot{x}^j}
\right]
=-X_j^\sigma
-\tfrac{1}{2}
\dfrac{\partial F^i}{\partial \dot{x}^j}
\dfrac{\partial }{\partial \dot{x}^i},
\smallskip \\
\left[  X_i^\sigma ,X_j^\sigma
\right]
=T_{ij}^k
\dfrac{\partial }{\partial \dot{x}^k},
& \left[
X_i^\sigma ,
\dfrac{\partial }{\partial \dot{x}^j}
\right]
=-\tfrac{1}{2}
\dfrac{\partial ^2F^h}
{\partial \dot{x}^j\partial \dot{x}^i}
\dfrac{\partial }{\partial \dot{x}^h},
\end{array}
\]
and we can conclude by using Table \ref{vH}.
\end{proof}

\section{$G$-structure attached to a SODE}
\label{section_G-structure}

\begin{definition}
\label{definition1}
A linear frame
$(X_0,X_1,\dotsc,X_{2n})\in F_\xi (M^1)$,
$\xi\in M^1$, is said to be \emph{adapted}
to a SODE $\sigma $ on $M$
if it satisfies the following three
conditions:

\begin{enumerate}
\item[(i)]
$X_0=(X^\sigma )_\xi $.

\item[(ii)] The tangent vector $X_{n+i}$
is $p^{10}$-vertical for $1\leq i\leq n$.

\item[(iii)]
$X_i=\left(
H^\sigma \circ\varepsilon^{-1}
\right)
\left(
X_{n+i}
\right) $
for $1\leq i\leq n$.
\end{enumerate}
\end{definition}

\begin{proposition}
\label{Prop2}
Let $G$ be the image of the Lie-group monomorphism
\[
\begin{array}
[c]{l}
\iota \colon Gl(n,\mathbb{R})
\to Gl(2n+1,\mathbb{R}),\\
\iota (\Lambda )
=\left(
\begin{array}
[c]{ccc}
1 & 0 & 0\\
0 & \Lambda & 0\\
0 & 0 & \Lambda
\end{array}
\right) ,
\quad
\forall \Lambda \in Gl(n,\mathbb{R}).
\end{array}
\]
Let $\pi \colon F(M^1)\to M^1$ be the bundle
of linear frames of the manifold $M^1$.
The bundle $\pi\colon P^\sigma \to M^1$
of all linear frames adapted to a given
SODE $\sigma $ on $M$ is a $G$-structure.
Moreover, if $(U;x^i)$ is a coordinate
open domain in $M$, then the linear frame
\begin{equation}
\begin{array}
[c]{l}
s\colon J^1(\mathbb{R},U)\to  F(M^1),\\
s(\xi )
=\left(
\left(
X^\sigma
\right) _\xi ,
\left(
X_i^\sigma \right) _\xi ,
\left(
\frac{\partial }{\partial \dot{x}^i}
\right) _\xi
\right) ,
\quad
1\leq i\leq n,\;\xi\in J^1(\mathbb{R},U),
\end{array}
\label{reference}
\end{equation}
defines a section of $P^\sigma $,
with dual coframe
$(dt,\omega ^i,\varpi ^i)$,
$1\leq i\leq n$, where
\begin{equation}
\varpi ^i=d\dot{x}^i-F^idt
-\tfrac{1}{2}
\frac{\partial F^i}{\partial \dot{x}^j}
\left(
dx^j-\dot{x}^jdt
\right) .\label{varomega}
\end{equation}

\end{proposition}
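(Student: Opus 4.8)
The plan is to describe, for each fixed $\xi\in M^1$, the set of all linear frames at $\xi$ adapted to $\sigma$ and to identify it with the orbit $s(\xi)\cdot G$ of the reference frame \eqref{reference}. Condition (i) of Definition \ref{definition1} fixes the zeroth vector of an adapted frame to be $(X^\sigma)_\xi$. Condition (ii) places $X_{n+i}$ in $V_\xi(p^{10})=T_\xi^+(M^1)=\langle(\partial/\partial\dot{x}^1)_\xi,\dotsc,(\partial/\partial\dot{x}^n)_\xi\rangle$ by \eqref{T^+}, so $X_{n+i}=(\partial/\partial\dot{x}^j)_\xi\Lambda^j_i$ for a unique matrix $\Lambda=(\Lambda^j_i)$, which is invertible since by the splitting \eqref{decomposition} the $p^{10}$-vertical components of a basis of $T_\xi M^1$ are themselves linearly independent. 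Condition (iii), combined with $\varepsilon^{-1}((\partial/\partial\dot{x}^j)_\xi)=\partial/\partial x^j$ (the local form of $\varepsilon$, cf.\ \eqref{epsilon}) and $H^\sigma(\partial/\partial x^j)=X_j^\sigma$ (read off from \eqref{Hsigma}, since $\omega^k(\partial/\partial x^j)=\delta^k_j$ and $dt(\partial/\partial x^j)=0$, using the implicit inclusion $TM\hookrightarrow TM^0$), then forces $X_i=(H^\sigma\circ\varepsilon^{-1})(X_{n+i})=(X_j^\sigma)_\xi\Lambda^j_i$. Comparing with the right action of $Gl(2n+1,\mathbb{R})$ on $F(M^1)$ and the block form of $\iota(\Lambda)$ one obtains $(X_0,\dotsc,X_{2n})=s(\xi)\cdot\iota(\Lambda)$; conversely every such frame is adapted. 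Hence $P^\sigma_\xi=s(\xi)\cdot G$ for all $\xi\in M^1$.

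From this the $G$-structure property is immediate. The fibres of $\pi\colon P^\sigma\to M^1$ are nonempty and are single orbits of the free, restricted right action of $G$; and over a coordinate domain the trivialization of $F(M^1)$ induced by $s$ identifies $\pi^{-1}(J^1(\mathbb{R},U))\cap P^\sigma$ with $J^1(\mathbb{R},U)\times G$, which is a submanifold because $G$ is a closed embedded Lie subgroup of $Gl(2n+1,\mathbb{R})$ and $s$ is smooth by \eqref{reference}. This supplies the required local trivializations, so $P^\sigma$ is a principal $G$-subbundle of $F(M^1)$; the transition functions between two such trivializations automatically take values in $G$, since any two reference sections of $P^\sigma$ over an overlap differ fibrewise by an element of $G$.

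For the remaining assertions, taking $\Lambda=I_n$ above shows that $s$ of \eqref{reference} is a section of $P^\sigma$ (equivalently, $s(\xi)$ satisfies (i)--(iii) directly, (i) and (ii) being obvious and (iii) being $(H^\sigma\circ\varepsilon^{-1})((\partial/\partial\dot{x}^i)_\xi)=(X_i^\sigma)_\xi$). The dual coframe is then found by a direct pairing check: writing $\varpi^i=d\dot{x}^i-F^idt-\tfrac12(\partial F^i/\partial\dot{x}^j)\omega^j$ from \eqref{varomega} and using $X^\sigma=\partial/\partial t+\dot{x}^i\partial/\partial x^i+F^i\partial/\partial\dot{x}^i$ together with $X_i^\sigma$ from \eqref{T^-} and $\omega^i$ from \eqref{J}, one evaluates each of $dt,\omega^i,\varpi^i$ on each of $X^\sigma,X_j^\sigma,\partial/\partial\dot{x}^j$ and recovers the Kronecker deltas; e.g.\ $\varpi^i(X_j^\sigma)=\tfrac12\,\partial F^i/\partial\dot{x}^j-\tfrac12\,\partial F^i/\partial\dot{x}^j=0$ and $\varpi^i(\partial/\partial\dot{x}^j)=\delta^i_j$.

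I expect no essential obstacle; the single point deserving care is the computation $(H^\sigma\circ\varepsilon^{-1})(\partial/\partial\dot{x}^j)=X_j^\sigma$ (with the implicit inclusion $TM\hookrightarrow TM^0$), on which both the description of the adapted frames and the coframe identification rest, together with keeping straight the block structure of $\iota(\Lambda)$ under the frame-bundle right action.
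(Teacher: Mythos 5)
Your argument is correct and follows essentially the same route as the paper: you show that conditions (i)--(iii) force every adapted frame at $\xi$ to be of the form $s(\xi)\cdot\iota(\Lambda)$ with $\Lambda\in Gl(n,\mathbb{R})$ invertible (and conversely), and that $s$ itself satisfies (i)--(iii), the key point in both proofs being $(H^\sigma\circ\varepsilon^{-1})(\partial/\partial\dot{x}^j)=X_j^\sigma$. Your extra details --- the invertibility of $\Lambda$, the local trivializations making $P^\sigma$ a genuine $G$-subbundle, and the explicit duality check for $(dt,\omega^i,\varpi^i)$ --- merely spell out steps the paper leaves implicit.
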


\begin{proof}
First of all we prove the last part
of the statement, i.e., the section
$s$ in the formula \eqref{reference}
takes values in $P^\sigma $.
In Definition \ref{definition1},
the items (i) and (ii) are obvious,
and the item (iii) follows directly
from the definitions of $\varepsilon $
and $H^\sigma $ in the formulas
\eqref{epsilon} and \eqref{Hsigma},
respectively. By again taking the items
(i)-(iii) in Definition \ref{definition1}
into account, it follows that every linear
frame $(X_\alpha )_{\alpha =0}^{2n}
\in (P^\sigma )_\xi $ can be written as
$X_0=(X^\sigma )_\xi $,
$X_j=\lambda _j^i(X_i^\sigma )_\xi $,
$X_{n+j}=\lambda _j^i
\left(
\partial /\partial \dot{x}^i
\right) _\xi $,
$\;1\leq j\leq n$, with
$\Lambda =(\lambda _j^i)\in Gl(n,\mathbb{R})$,
or equivalently,
$(X_0,X_1,\dotsc,X_{2n})
=s(\xi )\cdot \iota (\Lambda )$,
$\iota (\Lambda )$ being the matrix
defined in the statement, and the result follows.
\end{proof}

\begin{remark}
Each $G$-structure $P\subset F(M^1)$ determines
a vector field $X_P$ on $M^1$ by $(X_P)_\xi
=X_0\in T_\xi M^1$ for every $\xi \in M^1$,
where $u=(X_0,X_1,\dotsc,X_{2n})$ is an arbitrary
linear frame in $P$ at $\xi $. The definition
makes sense as $X_0$ is kept invariant under
all the elements in $G$. A $G$-structure $P$
is the $G$-structure associated with a SODE
if and only if $X_P$ is a dynamical flow, i.e.,
$X_P(t)=1$ and $i_{X_P}(\mathcal{C}_{M^1})=0$,
where $\mathcal{C}_{M^1}$ is the contact
differential system on $TM^1$, locally spanned
by the $1$-forms $\omega ^i$, $1\leq i\leq n$,
defined in the formula \eqref{J}.
\end{remark}

\begin{proposition}
\label{Prop2.1}
The first prolongation of the Lie algebra
$\mathfrak{g}
=\iota _\ast \mathfrak{gl}(n,\mathbb{R})$
of $G$ in \emph{Proposition \ref{Prop2}},
vanishes; namely, $\mathfrak{g}^{(1)}
=\left(
S^2V^\ast \otimes V
\right)
\cap
\left(
V^\ast \otimes\mathfrak{g}
\right)
=\{ 0\} $,
where $V=\mathbb{R}^{2n+1}$.
\end{proposition}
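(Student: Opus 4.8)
The plan is to unwind the definition of the first prolongation directly in terms of the block structure of $\mathfrak{g}$. Viewing $S^2V^\ast\otimes V$ as the space of symmetric bilinear maps $V\times V\to V$ and $V^\ast\otimes\mathfrak{g}$ as those bilinear maps $T$ for which $T(v,\cdot)\in\mathfrak{g}$ for every $v\in V$, an element $T\in\mathfrak{g}^{(1)}$ is exactly a bilinear map $T\colon V\times V\to V$ such that
\[
T(v,w)=T(w,v),\qquad T(v,\cdot)\in\mathfrak{g}\quad\text{for all }v,w\in V .
\]
I would then decompose $V=\mathbb{R}e_0\oplus V_-\oplus V_+$ according to the three diagonal blocks of $\iota$, where $V_-=\langle e_1,\dots,e_n\rangle$ and $V_+=\langle e_{n+1},\dots,e_{2n}\rangle$, and introduce the linear isomorphism $\phi\colon V_-\to V_+$, $\phi(e_i)=e_{n+i}$, that implements the repetition of the block $\Lambda$ in $\iota(\Lambda)$. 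The definition of $\mathfrak{g}$ in Proposition \ref{Prop2} then says precisely that every $A\in\mathfrak{g}$ annihilates $e_0$, preserves both $V_-$ and $V_+$, and intertwines its restrictions to these subspaces: $A\circ\phi=\phi\circ(A|_{V_-})$, equivalently $A(\phi(y))=\phi(A(y))$ for all $y\in V_-$.

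The argument is then a short diagram chase in four steps. First, since $A(e_0)=0$ for every $A\in\mathfrak{g}$, we get $T(v,e_0)=0$ for all $v$, and symmetry yields $T(e_0,v)=0$ for all $v$, which disposes of every pair containing $e_0$. Second, since each $A\in\mathfrak{g}$ preserves $V_-$ and $V_+$, we have $T(u,V_-)\subseteq V_-$ and $T(u,V_+)\subseteq V_+$ for every $u$; applying this together with symmetry to a pair $u\in V_+$, $w\in V_-$ gives $T(u,w)\in V_-\cap V_+=\{0\}$, so $T$ vanishes on $V_-\times V_+$ and on $V_+\times V_-$. Third, for $u,w\in V_-$ the intertwining relation gives $T(u,\phi(w))=\phi\bigl(T(u,w)\bigr)$, while the second step shows $T(u,\phi(w))=0$ because $\phi(w)\in V_+$; injectivity of $\phi$ forces $T|_{V_-\times V_-}=0$. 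Fourth, for $u,v\in V_+$ write $u=\phi(u')$ with $u'\in V_-$; then $T(u,v)=T(v,\phi(u'))=\phi\bigl(T(v,u')\bigr)$ by symmetry and the intertwining relation, and $T(v,u')=0$ by the second step since $(v,u')\in V_+\times V_-$, so $T|_{V_+\times V_+}=0$. Since every pair of basis vectors of $V$ now falls into one of these cases, bilinearity gives $T=0$, i.e. $\mathfrak{g}^{(1)}=\{0\}$.

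I expect no serious obstacle here; the one point requiring attention—and the reason the statement is not automatic—is that the ambient algebra is \emph{not} $\mathfrak{gl}(V)$. Indeed, the first prolongation of $\mathfrak{gl}(n,\mathbb{R})$ in its standard representation on $\mathbb{R}^n$ is nonzero (it equals $S^2(\mathbb{R}^n)^\ast\otimes\mathbb{R}^n$, since the condition $V^\ast\otimes\mathfrak{g}$ is then vacuous), so the vanishing genuinely uses the specific reducible embedding $\iota$: both the presence of the trivial summand $\mathbb{R}e_0$ and the fact that the block $\Lambda$ occurs twice are essential, and these are exactly what the intertwining isomorphism $\phi$ and the annihilation of $e_0$ encode in the chase above. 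The main care needed is the bookkeeping that ensures no pair of summands $\mathbb{R}e_0,V_-,V_+$ is overlooked.
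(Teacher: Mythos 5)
Your proof is correct and is essentially the paper's argument in coordinate-free dress: identifying $\mathfrak{g}$ with the endomorphisms that kill $e_0$, preserve $V_-$ and $V_+$, and are intertwined by $\phi$, your steps 1--4 reproduce exactly the paper's index manipulations, namely the constraints \eqref{c4}, \eqref{c5} combined with the symmetry $t_{\alpha\beta}^\gamma=t_{\beta\alpha}^\gamma$. No gaps.
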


\begin{proof}
Let $(v_\alpha )_{\alpha =0}^{2n}$ be the standard basis
for $V$ with dual basis $(v^\alpha )_{\alpha =0}^{2n}$.
If $t=t_{\alpha \beta }^\gamma v^\alpha \otimes v^\beta
\otimes v_\gamma \in \mathfrak{g}^{(1)}$, then, once
an index $0\leq \alpha \leq 2n$ is fixed, the endomorphism
$t_{\alpha \beta }^\gamma v^\beta \otimes v_\gamma $
belongs to $\mathfrak{g}$. Taking the defintion
of the subalgebra
$\mathfrak{g}\subset \mathfrak{gl}(2n+1,\mathbb{R})$
in Proposition \ref{Prop2} into account, we obtain
$t_{\alpha \beta }^\gamma =0$, if $\beta =0$
or $\gamma =0$, and
\begin{equation}
\label{c4}
t_{\alpha \beta }^\gamma =0,
\;\text{if }1\leq \beta \leq n,
\; n+1\leq \gamma \leq 2n\text{ or }
n+1\leq \beta \leq 2n,
\; 1\leq \gamma \leq n,
\end{equation}
\begin{equation}
\label{c5}
t_{\alpha \beta }^\gamma
=t_{\alpha ,\beta +n}^{\gamma +n},
\;\beta ,\gamma =1,\dotsc,n.
\end{equation}
For $1\leq \alpha \leq n$ and
$\beta ,\gamma =1,\dotsc,n$,
taking $t_{\alpha \beta }^\gamma
=t_{\beta \alpha }^\gamma $,
\eqref{c5}, and \eqref{c4} into account,
we obtain
\begin{equation}
\label{c1}
t_{\alpha \beta}^\gamma
\overset{\mathrm{\eqref{c5}}}{=}
t_{\alpha ,\beta +n}^{\gamma +n}
=t_{\beta +n,\alpha }^{\gamma +n}
\overset{\mathrm{\eqref{c4}}}{=}0.
\end{equation}
For $n+1\leq \alpha \leq 2n$
and $\beta,\gamma =1,\dotsc,n$,
taking
$t_{\alpha \beta}^\gamma
=t_{\beta \alpha }^\gamma $
and \eqref{c4} into account, we obtain
\begin{equation}
\label{c2}
t_{\alpha \beta }^\gamma
=t_{\beta \alpha }^\gamma
\overset{\mathrm{\eqref{c4}}}{=}0.
\end{equation}
Finally, from \eqref{c5}, \eqref{c1},
and \eqref{c2}, we conclude
\[
t_{\alpha ,\beta +n}^{\gamma +n}
=t_{\alpha \beta}^\gamma =0,
\text{ for }1\leq \alpha \leq 2n
\text{ and }\beta,\gamma =1,\dotsc,n.
\]
\end{proof}

\begin{remark}
The previous proof is avoidable by simply looking
at the table of Lie algebras with non-trivial first
prolongation, e.g., see \cite[Tables 1 \& 2]{L}.
\end{remark}

\begin{proposition}
\label{Prop2.2}The adjoint bundle of the $G$-structure
$\pi \colon P^\sigma \to M^1$ corresponding to a SODE
$\sigma $ on $M$ can be identified to the vector subbundle
\[
\mathrm{ad}P^\sigma
\subseteq T^\ast (M^1)\otimes T(M^1)
\]
of all endomorphisms
$E\colon T(M^1)\to T(M^1)$ such that,
\[
E(T^0(M^1))=\{ 0\} ,
\quad E(T^-(M^1))\subseteq T^-(M^1),
\quad E(T^+(M^1))\subseteq T^+(M^1),
\]
and the composition map
$H^\sigma \circ \varepsilon ^{-1}$
conjugates
$\left. E\right\vert _{T^-(M^1)}$
and
$\left. E\right\vert _{T^+(M^1)}$,
i.e.,
\[
\left.
E\right\vert _{T^-(M^1)}\circ H^\sigma
\circ \varepsilon ^{-1}
=H^\sigma \circ\varepsilon^{-1}\circ
\left.
E\right\vert _{T^+(M^1)},
\]
where $\varepsilon $, $T^0(M^1)$, $T^-(M^1)$,
$T^+(M^1)$, and $H^\sigma $ are given
in the formulas \emph{\eqref{epsilon}},
\emph{\eqref{T^0}}, \emph{\eqref{T^-}},
\emph{\eqref{T^+}}, and \emph{\eqref{Hsigma}},
respectively.
\end{proposition}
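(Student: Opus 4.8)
The plan is to identify $\mathrm{ad}\,P^\sigma$ with its image under the standard embedding $\mathrm{ad}\,P^\sigma\hookrightarrow T^\ast(M^1)\otimes T(M^1)$ associated with the identity representation of $G$, and then to translate the condition of belonging to that image into the three displayed properties. Recall that for any $G$-structure $P\subset F(M^1)$ with Lie algebra $\mathfrak g\subset\mathfrak{gl}(2n+1,\mathbb R)$, the adjoint bundle $\mathrm{ad}\,P=P\times_G\mathfrak g$ is canonically a subbundle of $\mathrm{End}(TM^1)\cong T^\ast(M^1)\otimes T(M^1)$: a linear frame $u=(X_0,\dots,X_{2n})\in P_\xi$ together with a matrix $A=(A^\alpha_\beta)\in\mathfrak g$ determines the endomorphism $E_u(A)$ of $T_\xi M^1$ sending $X_\beta\mapsto A^\alpha_\beta X_\alpha$, and $E_{u\cdot g}(\mathrm{Ad}_{g^{-1}}A)=E_u(A)$, so the fibre $(\mathrm{ad}\,P^\sigma)_\xi$ is exactly $\{E_u(A):A\in\mathfrak g\}$ for any fixed $u\in(P^\sigma)_\xi$.

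The key computation is therefore to describe $\mathfrak g=\iota_\ast\mathfrak{gl}(n,\mathbb R)$ explicitly: it consists of the block matrices $\iota_\ast(a)=\mathrm{diag}(0,a,a)$ for $a\in\mathfrak{gl}(n,\mathbb R)$. Fixing the local section $s$ of Proposition~\ref{Prop2}, whose value $u=s(\xi)$ is the frame $((X^\sigma)_\xi,(X_i^\sigma)_\xi,(\partial/\partial\dot x^i)_\xi)$, one reads off that $E_u(\iota_\ast(a))$ kills $X^\sigma$ (annihilating $T^0(M^1)$ by \eqref{T^0}), maps $X_i^\sigma\mapsto a^j_i X_j^\sigma$ (preserving $T^-(M^1)$ by \eqref{T^-}), and maps $\partial/\partial\dot x^i\mapsto a^j_i\,\partial/\partial\dot x^j$ (preserving $T^+(M^1)$ by \eqref{T^+}); moreover the \emph{same} matrix $a$ governs both the $T^-$ and the $T^+$ blocks. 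Since $H^\sigma\circ\varepsilon^{-1}$ carries $\partial/\partial\dot x^i$ to $X_i^\sigma$ (this is precisely item (iii) of Definition~\ref{definition1}, already used in the proof of Proposition~\ref{Prop2}), this coincidence of the two blocks is exactly the stated conjugacy relation $E|_{T^-}\circ(H^\sigma\circ\varepsilon^{-1})=(H^\sigma\circ\varepsilon^{-1})\circ E|_{T^+}$. This shows $(\mathrm{ad}\,P^\sigma)_\xi$ is contained in the subspace of $\mathrm{End}(T_\xi M^1)$ described in the statement.

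For the reverse inclusion, I would argue that any endomorphism $E$ satisfying the three conditions arises this way. Given such an $E$ at $\xi$, its restriction to $T^+(M^1)$, written in the basis $(\partial/\partial\dot x^i)$, defines a matrix $a=(a^j_i)\in\mathfrak{gl}(n,\mathbb R)$; the conjugacy condition forces $E|_{T^-(M^1)}$ to be represented by the same matrix $a$ in the basis $(X_i^\sigma)$, and $E|_{T^0(M^1)}=0$ by the first condition. Hence $E=E_{s(\xi)}(\iota_\ast(a))\in(\mathrm{ad}\,P^\sigma)_\xi$. Finally, one notes that all three conditions are frame-independent within $P^\sigma$ — they refer only to the intrinsic subbundles $T^0,T^-,T^+$ and to $H^\sigma,\varepsilon$ — so the pointwise identification patches to a global identification of vector bundles $\mathrm{ad}\,P^\sigma\cong\{E\in T^\ast(M^1)\otimes T(M^1):\dots\}$.

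I expect the only real subtlety to be bookkeeping: making sure the identification $\mathrm{ad}\,P=P\times_G\mathfrak g\subset\mathrm{End}(TM^1)$ is set up with the correct variance so that the matrix $a$ acts the \emph{same} way on the two $n$-dimensional blocks (rather than, say, by $a$ on one and $a^\top$ or $-a^\top$ on the other), and then checking that "acts the same way on $(X_i^\sigma)$ and on $(\partial/\partial\dot x^i)$" is literally the conjugacy statement through $H^\sigma\circ\varepsilon^{-1}$. Everything else is a direct unwinding of Definition~\ref{definition1} and Proposition~\ref{Prop2}; no hard analysis or geometry is involved.
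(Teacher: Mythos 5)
Your proposal is correct and follows essentially the same route as the paper: the paper identifies $\mathrm{ad}P^\sigma=P^\sigma\times_G\mathfrak{g}$ inside $T^\ast(M^1)\otimes T(M^1)$ and says the result "follows readily" from conditions (i)--(iii) of Definition \ref{definition1}, which is precisely the frame computation (same matrix $a$ acting on the $(X_i^\sigma)$ and $(\partial/\partial\dot{x}^i)$ blocks, intertwined by $H^\sigma\circ\varepsilon^{-1}$) that you carry out explicitly, including the equivariance check and the converse inclusion that the paper leaves implicit.
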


\begin{proof}
By its very definition, the adjoint bundle
of $\pi \colon P^\sigma \to M^1$ is the bundle associated
to $P^\sigma $ with respect to the adjoint representation
of $G$ on its Lie algebra $\mathfrak{g}$, as defined
in Propositions \ref{Prop2} and \ref{Prop2.1}, respectively.
The result then follows readily from the conditions
(i)-(iii) in Definition \ref{definition1}.
\end{proof}

\begin{proposition}
\label{Prop3}The Chern connection as defined
in \emph{\cite{Chern}} and \emph{\cite{MassaPagani}}
is reducible (cf.\ \emph{\cite[p. 81]{KN}})
to the $G$-structure $P^\sigma $.
\end{proposition}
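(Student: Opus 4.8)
The plan is to establish reducibility via the standard criterion (cf.\ \cite[p.\ 81]{KN}): a linear connection on $M^1$ reduces to the subbundle $P^\sigma\subset F(M^1)$ with structure group $G$ precisely when, for one (hence every) local section of $P^\sigma$, the corresponding local connection form is $\mathfrak g$-valued, where $\mathfrak g=\iota _\ast \mathfrak{gl}(n,\mathbb R)$ as in Proposition \ref{Prop2.1}. Since the coordinate charts $(U;x^i)$ of $M$ cover $M^1$ by the domains $J^1(\mathbb R,U)$ and any two $P^\sigma$-valued local sections are related by a $G$-valued gauge transformation (under which $\mathfrak g$-valuedness is preserved), it suffices to work with the section $s$ of \eqref{reference}. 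Writing $e=(e_\alpha )_{\alpha =0}^{2n}$ for the adapted frame of \eqref{reference}, namely $e_0=X^\sigma $, $e_i=X_i^\sigma $, $e_{n+i}=\partial /\partial \dot{x}^i$ ($1\leq i\leq n$), and defining the connection $1$-forms $\theta ^\alpha _\beta $ by $\nabla ^\sigma e_\beta =\theta ^\alpha _\beta \otimes e_\alpha $, one has $(\theta ^\alpha _\beta )=s^\ast \omega ^{\nabla ^\sigma }$, and from the definition of $\iota $ in Proposition \ref{Prop2} the subalgebra $\mathfrak g$ consists exactly of the block matrices $\mathrm{diag}(0,A,A)$, $A\in \mathfrak{gl}(n,\mathbb R)$. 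Hence the statement is equivalent to the conjunction of: (a) $\nabla ^\sigma X^\sigma =0$; (b) there exist $1$-forms $\theta ^j_i$, $1\leq i,j\leq n$, with $\nabla ^\sigma X_i^\sigma =\theta ^j_i\otimes X_j^\sigma $ (so that $T^-(M^1)$ is parallel and no $X^\sigma $-component appears); (c) $\nabla ^\sigma (\partial /\partial \dot{x}^i)=\theta ^j_i\otimes (\partial /\partial \dot{x}^j)$ with the \emph{same} forms $\theta ^j_i$ as in (b).

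To check (a)--(c) I would insert the explicit local expression of the Chern connection of \cite{Chern}, \cite{MassaPagani} — whose coefficients are polynomials in $F^i$, $\partial F^i/\partial \dot{x}^j$ and $\partial ^2F^i/\partial \dot{x}^j\partial \dot{x}^k$ — and pass from the natural coordinate frame $(\partial /\partial t,\partial /\partial x^i,\partial /\partial \dot{x}^i)$ to the adapted frame $e$. This change of frame is effected by the (lower unitriangular) matrix implicit in \eqref{T^-} and \eqref{varomega}; the connection form transforms by the usual rule $\theta \mapsto g^{-1}\theta g+g^{-1}dg$, and one uses the coframe $(dt,\omega ^i,\varpi ^i)$ dual to $e$ together with the bracket relations for $X^\sigma $, $X_i^\sigma $, $\partial /\partial \dot{x}^i$ recorded in the proof of Proposition \ref{Prop1}. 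After this substitution what remains is purely to read off that the $0$-th row and $0$-th column of $(\theta ^\alpha _\beta )$ vanish, that the two off-diagonal $n\times n$ blocks vanish, and that the two diagonal $n\times n$ blocks agree.

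I expect the only genuinely non-formal point to be assertion (c): that the $\mathfrak{gl}(n,\mathbb R)$-valued form rotating the vertical frame $\partial /\partial \dot{x}^i$ is identical to the one rotating the horizontal frame $X_i^\sigma $. This is exactly what Chern secures by the normalization $\beta ^i_k=\tfrac12\,\partial F^i/\partial \dot{x}^k$ quoted in the Introduction: it is the choice that makes $\nabla ^\sigma $ intertwine $\left. \nabla ^\sigma \right| _{T^-(M^1)}$ and $\left. \nabla ^\sigma \right| _{T^+(M^1)}$ through $H^\sigma \circ \varepsilon ^{-1}$, which by Proposition \ref{Prop2.2} is precisely the condition for the connection matrix at a point of $P^\sigma $ to lie in $\mathrm{ad}P^\sigma $. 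Granting this, (a) and (b) follow by inspection of the same local expression, and the reducibility of $\nabla ^\sigma $ to $P^\sigma $ is established.
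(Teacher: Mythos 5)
Your proposal is correct and follows essentially the same route as the paper: reducibility is checked by showing that, with respect to the adapted section $s=(X^\sigma ,X_i^\sigma ,\partial /\partial \dot{x}^i)$ of $P^\sigma $ and its dual coframe $(dt,\omega ^i,\varpi ^i)$, the connection form of $\nabla ^\sigma $ is $\mathfrak g$-valued, i.e.\ $\nabla ^\sigma X^\sigma =0$, the $T^-$ and $T^+$ blocks are preserved, and the two $n\times n$ blocks coincide — which the paper verifies from the explicit adapted-frame formulas of \cite{MassaPagani} via Fujimoto's reducibility criterion, exactly the computation you outline (including the key point that Chern's normalization makes the horizontal and vertical blocks share the same coefficient forms).
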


\begin{proof}
As is well known (see \cite[Corollary 2.1]{MassaPagani}),
the Chern connection $\nabla ^\sigma $ attached to a SODE
$\sigma $ is locally given in the frame
$\left( X_{\alpha }\right) _{\alpha =0}^{2n}
=\left(
X^\sigma ,X_i^\sigma ,\partial/\partial \dot{x}^j
\right) $,
$i,j=1,\dotsc,n$, by the following
formulas:
{\small
\begin{equation}
\begin{array}
[c]{lll}
\nabla  _{X^\sigma }^\sigma X^\sigma
=0,
& \nabla _{X^\sigma }^\sigma X_i^\sigma
=-\frac{1}{2}
\tfrac{\partial F^j}{\partial \dot{x}^i}
X_j^\sigma ,
& \nabla _{X^\sigma }^\sigma
\tfrac{\partial }{\partial \dot{x}^i}
=-\frac{1}{2}
\tfrac{\partial F^j}{\partial \dot{x}^i}
\tfrac{\partial }{\partial \dot{x}^j},
\medskip \\
\nabla _{X_i^\sigma }^\sigma X^\sigma =0,
& \nabla _{X_j^\sigma }^\sigma X_i^\sigma
=-\frac{1}{2}
\tfrac{\partial^2F^k}
{\partial \dot{x}^i\partial \dot{x}^j}
X_k^\sigma ,
& \nabla _{X_i^\sigma }^\sigma
\tfrac{\partial }{\partial \dot{x}^j}
=-\frac{1}{2}
\tfrac{\partial^2F^k}
{\partial \dot{x}^i\partial \dot{x}^j}
\tfrac{\partial }{\partial \dot{x}^k},
\medskip \\
\nabla _{\frac{\partial }
{\partial \dot{x}^i}}^\sigma X^\sigma
=0, & \nabla _{\frac{\partial }
{\partial \dot{x}^i}}^\sigma X_j^\sigma
=0, & \nabla _{\frac{\partial }
{\partial \dot{x}^i}}^\sigma
\tfrac{\partial }{\partial \dot{x}^j}=0.
\end{array}
\label{Tabla}
\end{equation}
}We also set
$\left( \theta ^\alpha \right) _{\alpha =0}^{2n}
=\left( dt,\omega ^i,\varpi ^i\right) $,
$1\leq i\leq n$, where $\omega ^i$ (resp.\
$\varpi ^i$) is defined in the formula
\eqref{J} (resp.\ \eqref{varomega}). Moreover,
the equations of the subalgebra
$\mathfrak{g}
\subset \mathfrak{gl}(2n+1,\mathbb{R})$
(cf.\ Proposition \ref{Prop2}) are
\begin{align*}
a_0^\alpha
& =0,
\; 0\leq\alpha \leq 2n,\\
a_\alpha ^0
& =0,
\; 1\leq\alpha \leq 2n,\\
a_\beta ^\alpha
& =0,
\; \text{if }1\leq \alpha \leq n,
\; n+1\leq\beta
\leq 2n \text{ or }n+1\leq \alpha \leq 2n,
\; 1\leq \beta \leq n,\\
a_\beta ^\alpha
& =a_{n+\beta }^{n+\alpha },
\; \alpha ,\beta =1,\dotsc,n,
\end{align*}
where
$A=(a_\beta ^\alpha )_{\alpha ,\beta =0}^{2n}
\in \mathfrak{gl}(2n+1,\mathbb{R})$. According
to (\cite[Proposition 5.4]{Fujimoto}) Chern's
connection is reducible to $P^\sigma $ if
and only if for every $X\in \mathfrak{X}(M^1)$,
the following equations hold:
\begin{equation}
\begin{array}
[c]{l}
\theta ^0
\left(
\nabla _X^\sigma X_0
\right)
=dt\left(
\nabla _X^\sigma X^\sigma
\right)
=0,\\
\theta ^i
\left(
\nabla _X^\sigma X_0
\right)
=\omega ^i
\left(
\nabla _X^\sigma X^\sigma
\right)
=0,\\
\theta ^{n+i}
\left(
\nabla _X^\sigma X_0
\right)
=\varpi^i
\left(
\nabla _X^\sigma X^\sigma
\right)
=0,\\
\theta ^0
\left(
\nabla _X^\sigma X_i
\right)
=dt\left(
\nabla _X^\sigma X_i^\sigma
\right)
=0,\\
\theta ^0
\left(
\nabla _X^\sigma X_{n+i}
\right)
=dt\left(
\nabla _X^\sigma
\dfrac{\partial }{\partial \dot{x}^i}
\right)
=0,\\
\theta ^{n+i}
\left(
\nabla _X^\sigma X_j
\right)
=\varpi^i
\left(
\nabla _X^\sigma X_j^\sigma
\right)
=0,\\
\theta ^i
\left(
\nabla _X^\sigma X_{n+j}
\right)
=\omega ^i
\left(
\nabla _X^\sigma
\dfrac{\partial }{\partial \dot{x}^j}
\right)
=0,\\
\theta ^j
\left(
\nabla _X^\sigma X_i
\right)
=\theta ^{n+j}
\left(
\nabla _X^\sigma X_{n+i}
\right)  .
\end{array}
\label{G-structure}
\end{equation}
The equations \eqref{G-structure}
are an easy consequence of the formulas
in Table \eqref{Tabla}. In fact,
the three first conditions are straightforward
taking account of the fact that $X^\sigma $
is parallel. Furthermore, from the equations
in Table \eqref{Tabla} we obtain
$dt(\nabla _X^\sigma X_i^\sigma )=0$,
$\varpi ^i(\nabla _X^\sigma X_j^\sigma )=0$
(resp.\
$dt(\nabla _X^\sigma \partial /\partial \dot{x}^i)
=0$, $\omega ^i
(\nabla _X^\sigma \partial /\partial \dot{x}^j)=0$).

Finally, by setting $X=aX^\sigma +a^iX_i^\sigma
+b^i\partial /\partial \dot{x}^i$, we obtain
\begin{align*}
\theta ^j
\left(
\nabla _X^\sigma X_i
\right)
& =\omega ^j
\left(
\nabla _X^\sigma X_i^\sigma
\right) \\
& =-\tfrac{1}{2}
\left(
a\frac{\partial F^j}{\partial \dot{x}^i}
+a^h
\frac{\partial^2F^j}
{\partial \dot{x}^h\partial \dot{x}^i}
\right)  ,\\
\theta ^{n+j}
\left(
\nabla _X^\sigma X_{n+i}
\right)
& =\varpi^j
\left(
\nabla _X^\sigma
\dfrac{\partial }{\partial \dot{x}^i}
\right) \\
& =-\tfrac{1}{2}
\left(
a\frac{\partial F^j}{\partial \dot{x}^i}
+a^h\frac{\partial^2F^j}
{\partial \dot{x}^h\partial \dot{x}^i}
\right)  ,
\end{align*}
for $i,j=1,\dotsc,n$, and the last equation
in \eqref{G-structure} follows.
\end{proof}

\section{Characterizations of the Chern connection}
\label{section_characterizations}

\subsection{First characterization}

\begin{theorem}
Given a SODE $\sigma$ on $M$, there is a unique
linear connection $\nabla ^\sigma $ on $M^1$
such that,

\begin{enumerate}
\item[\emph{(1)}]
$\nabla ^\sigma X^\sigma =0$.

\item[\emph{(2)}]
$\nabla ^\sigma L_{X^\sigma }J=0$.

\item[\emph{(3)}]
If $E^\sigma \colon T(M^1)\to T(M^1)$ is
$E^\sigma =J+H^\sigma \circ\varepsilon^{-1}\circ
\left(
L_{X^\sigma }J
\right) ^v$,
then $\nabla ^\sigma E^\sigma =0$.

\item[\emph{(4)}]
The torsion of $\nabla ^\sigma $ is the tensor field
$T^\sigma $ given by,
\begin{equation}
\label{torsion}
T^\sigma =K^\sigma +dt\wedge
\left(
H^\sigma \circ \varepsilon ^{-1}\circ
\left(
L_{X^\sigma }J
\right) ^v
\right) ,
\end{equation}
where $K^\sigma $ is the curvature form of the splitting
$H^\sigma $ induced by $\sigma $ as defined in the formula
\emph{\eqref{curvature_form}}.

This connection coincides with that defined in
\emph{\cite{Chern}} and \emph{\cite{MassaPagani}}.
\end{enumerate}
\end{theorem}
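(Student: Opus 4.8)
The plan is to reproduce, in this setting, the existence-and-uniqueness argument for the Levi-Civita connection: \textbf{existence} will be checked directly on the Chern connection $\nabla^\sigma$ of \cite{Chern}, \cite{MassaPagani} (whose local expression is Table \eqref{Tabla}), and \textbf{uniqueness} will be deduced from the vanishing of the first prolongation $\mathfrak{g}^{(1)}$ established in Proposition \ref{Prop2.1}, exactly as metric compatibility plus symmetry of the torsion pins down the Levi-Civita connection because $\mathfrak{so}(n)^{(1)}=0$. Since the connection exhibited in the existence step is Chern's, the final clause of the statement will follow at once, so the substantive work is contained in the two displays below and in the torsion computation.

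For existence I would first record the identifications, in the adapted frame $(X^\sigma,X_i^\sigma,\partial/\partial\dot{x}^i)$ of \eqref{reference} with dual coframe $(dt,\omega^i,\varpi^i)$,
\[
L_{X^\sigma}J=-\omega^i\otimes X_i^\sigma+\varpi^i\otimes\tfrac{\partial}{\partial\dot{x}^i},
\qquad
E^\sigma=\omega^i\otimes\tfrac{\partial}{\partial\dot{x}^i}+\varpi^i\otimes X_i^\sigma ,
\]
which a short computation from \eqref{LXJ}, \eqref{varomega}, and the splitting \eqref{vH} confirms; equivalently $L_{X^\sigma}J=\Pi^+-\Pi^-$ (the projections onto $T^\pm(M^1)$ along the other two summands of \eqref{decomposition}) and $E^\sigma$ is the involution of $T^-(M^1)\oplus T^+(M^1)$ interchanging the two summands and annihilating $T^0(M^1)$, with $E^\sigma|_{T^+(M^1)}=H^\sigma\circ\varepsilon^{-1}|_{T^+(M^1)}$. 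Condition (1) is the first column of Table \eqref{Tabla}. Table \eqref{Tabla} also shows that the connection matrix of $\nabla^\sigma$ in this frame is block-diagonal with vanishing $T^0$-block and with identical $T^-$- and $T^+$-blocks; hence $X^\sigma$, $\Pi^\pm$, and $E^\sigma$ are all $\nabla^\sigma$-parallel, giving (2) and (3). For (4) I would compute $T^\sigma(\cdot,\cdot)=\nabla^\sigma_\cdot\cdot-\nabla^\sigma_\cdot\cdot-[\cdot,\cdot]$ on pairs of frame vectors using Table \eqref{Tabla} together with the bracket formulas displayed in the proof of Proposition \ref{Prop1}: the pairs $(X^\sigma,X_i^\sigma)$ and $(X_i^\sigma,X_j^\sigma)$ yield $-P_i^h\partial/\partial\dot{x}^h$ and $-T^h_{ij}\partial/\partial\dot{x}^h$ (using the antisymmetry of $T^k_{ij}$ in $i,j$ from \eqref{T's}), which is precisely $K^\sigma$ as given by \eqref{K_sigma}; the pair $(X^\sigma,\partial/\partial\dot{x}^i)$ yields $X_i^\sigma$, which is $dt\wedge(\varpi^i\otimes X_i^\sigma)=dt\wedge(H^\sigma\circ\varepsilon^{-1}\circ(L_{X^\sigma}J)^v)$ evaluated there, and all remaining pairs vanish. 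This matches \eqref{torsion}. This torsion verification is the longest, but entirely mechanical, part of the argument.

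For uniqueness, suppose $\bar\nabla$ is another linear connection on $M^1$ satisfying (1)--(4), and set $D=\bar\nabla-\nabla^\sigma$, a tensor field $D\in T^\ast(M^1)\otimes\mathrm{End}(T(M^1))$ written $D(X,Y)=\bar\nabla_XY-\nabla^\sigma_XY$. Since the two torsions coincide, $D$ is symmetric: $D(X,Y)=D(Y,X)$. Fix a vector field $X$; the endomorphism $D(X,\cdot)$ of $T(M^1)$ annihilates $X^\sigma$ by (1); it commutes with $L_{X^\sigma}J$ by (2), because both connections parallelize $L_{X^\sigma}J$, hence it preserves each eigenbundle $T^0(M^1)$, $T^-(M^1)$, $T^+(M^1)$ of $L_{X^\sigma}J$; and it commutes with $E^\sigma$ by (3), so its restrictions to $T^-(M^1)$ and $T^+(M^1)$ are conjugate under $H^\sigma\circ\varepsilon^{-1}$. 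Together with $D(X,\cdot)(X^\sigma)=0$ and $T^0(M^1)=\langle X^\sigma\rangle$, these are exactly the conditions of Proposition \ref{Prop2.2}: $D(X,\cdot)$ is a section of $\mathrm{ad}P^\sigma$. Thus, fixing $\xi\in M^1$ and an adapted linear frame there, under the resulting identifications $T_\xi M^1\cong V=\mathbb{R}^{2n+1}$ and $\mathrm{ad}P^\sigma|_\xi\cong\mathfrak{g}$, the symmetric tensor $D_\xi$ lies in $(S^2V^\ast\otimes V)\cap(V^\ast\otimes\mathfrak{g})=\mathfrak{g}^{(1)}$, which is $\{0\}$ by Proposition \ref{Prop2.1}. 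Hence $D=0$, i.e.\ $\bar\nabla=\nabla^\sigma$, and since the existence witness was Chern's connection the theorem follows.

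I expect the only real obstacle to be bookkeeping rather than conceptual: getting the two identities for $L_{X^\sigma}J$ and $E^\sigma$ right and then carrying the torsion comparison through all pairs of frame vectors with the correct signs against \eqref{K_sigma} and \eqref{torsion}. Once those are in hand, conditions (1)--(3) for $\nabla^\sigma$ and the whole uniqueness step are formal, the one nontrivial ingredient being $\mathfrak{g}^{(1)}=\{0\}$, already proved in Proposition \ref{Prop2.1}.
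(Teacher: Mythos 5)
Your proposal is correct, but it proves the theorem by a different route than the paper does. The paper's own proof of this first characterization is a direct derivation: working in the adapted frame \eqref{reference}, it uses (1) to get $\nabla^\sigma X^\sigma=0$ componentwise, (2) to force each covariant derivative of $X_i^\sigma$ and $\partial/\partial\dot{x}^i$ to stay in the eigenbundles $T^-(M^1)$, $T^+(M^1)$, (3) to force the two coefficient blocks to coincide, and finally the torsion condition (4), evaluated on the pairs $(X^\sigma,\partial/\partial\dot{x}^i)$ and $(X_i^\sigma,\partial/\partial\dot{x}^j)$, to pin down all remaining coefficients; the outcome is exactly Table \eqref{Tabla}, so uniqueness and the identification with Chern's connection are obtained simultaneously, with existence implicit in the reversibility of the computation. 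You instead split the argument Levi-Civita style: existence by verifying (1)--(4) directly on \eqref{Tabla} (your block-diagonal observation gives (1)--(3) cheaply, and your torsion computation reproduces the one the paper actually performs in the proof of Theorem \ref{thm1}), and uniqueness by showing that the difference tensor $D=\bar\nabla-\nabla^\sigma$ is symmetric (equal torsions), that $D(X,\cdot)$ annihilates $X^\sigma$, preserves the eigenbundles of $L_{X^\sigma}J$ and is conjugated by $H^\sigma\circ\varepsilon^{-1}$ (parallelism of $L_{X^\sigma}J$ and $E^\sigma$), hence lies in $\mathrm{ad}\,P^\sigma$ by Proposition \ref{Prop2.2}, so that $D$ is a section of $(S^2T^\ast M^1\otimes TM^1)\cap(T^\ast M^1\otimes\mathrm{ad}\,P^\sigma)\cong\mathfrak{g}^{(1)}=\{0\}$ by Proposition \ref{Prop2.1}. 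This is in essence the uniqueness mechanism the paper reserves for its second characterization (Theorem \ref{thm1}); the price you pay is a genuine dependence on Propositions \ref{Prop2.1} and \ref{Prop2.2} (whose content you correctly translate into the commutation relations for $D(X,\cdot)$), while what you gain is a conceptual, frame-free uniqueness argument and a clean separation of existence from uniqueness; the paper's computation, by contrast, needs no $G$-structure input at all and delivers the explicit local formulas of the connection as a by-product. Both arguments are sound, and your identities $L_{X^\sigma}J=-\omega^i\otimes X_i^\sigma+\varpi^i\otimes\partial/\partial\dot{x}^i$ and $E^\sigma=\omega^i\otimes\partial/\partial\dot{x}^i+\varpi^i\otimes X_i^\sigma$ agree with \eqref{LXJ} and \eqref{E}, and your torsion values match \eqref{Torsion}.
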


\begin{proof}
By expressing the equation $\nabla ^\sigma X^\sigma =0$
in the frame \eqref{reference} we obtain
\begin{equation}
\left(
\nabla ^\sigma
\right) _{X^\sigma }X^\sigma =0,
\quad
\left(
\nabla ^\sigma
\right) _{X_i^\sigma }X^\sigma =0,
\quad
\left(
\nabla ^\sigma
\right) _{\partial /\partial \dot{x}^i}
X^\sigma =0.
\label{cc1}
\end{equation}
From item (2), taking the formula \eqref{LXJ}
for $L_{X^\sigma }J$ and the conditions \eqref{cc1}
into account, we obtain
\begin{align*}
0  &
=\left(
\nabla ^\sigma L_{X^\sigma }J
\right)
(X^\sigma ,X^\sigma )
=\nabla _{X^\sigma }^\sigma
\left(
\left(
L_{X^\sigma }J
\right)
(X^\sigma )
\right)
-\left(
L_{X^\sigma }J
\right)
(\nabla _{X^\sigma }^\sigma X^\sigma ),\\
0  &
=\left(
\nabla ^\sigma L_{X^\sigma }J
\right)
\left(
\frac{\partial }{\partial \dot{x}^i},X^\sigma
\right)
=\nabla _{X^\sigma }^\sigma
\frac{\partial }{\partial \dot{x}^i}
-\left(
L_{X^\sigma }J
\right)
\left(
\nabla _{X^\sigma }^\sigma
\frac{\partial }{\partial \dot{x}^i}
\right) ,\\
0  &
=\left(
\nabla ^\sigma L_{X^\sigma }J
\right)
\left(  X_i^\sigma ,X^\sigma
\right)
=-\nabla _{X^\sigma }^\sigma X_i^\sigma
-\left(
L_{X^\sigma }J
\right)
\left(
\nabla _{X^\sigma }^\sigma X_i^\sigma
\right) ,\\
0  &
=\left(
\nabla ^\sigma L_{X^\sigma }J
\right)
\left(
\frac{\partial }{\partial \dot{x}^i},X_j^\sigma
\right)
=\nabla _{X_j^\sigma }^\sigma
\frac{\partial }{\partial \dot{x}^i}
-\left(
L_{X^\sigma }J
\right)
\left(
\nabla _{X_j^\sigma }^\sigma
\frac{\partial }{\partial \dot{x}^i}
\right)  ,\\
0  &
=\left(
\nabla ^\sigma L_{X^\sigma }J
\right)
\left(
X_i^\sigma ,X_j^\sigma
\right)
=-\nabla _{X_j^\sigma }^\sigma X_i^\sigma
-\left(
L_{X^\sigma }J
\right)
\left(
\nabla _{X_j^\sigma }^\sigma X_i^\sigma
\right) ,\\
0  &
=\left(
\nabla ^\sigma L_{X^\sigma }J
\right)
\left(
X_i^\sigma ,\frac{\partial }{\partial \dot{x}^j}
\right)
=-\nabla _{\partial/\partial \dot{x}^j}^\sigma
X_i^\sigma
-\left(
 L_{X^\sigma }J
\right)
\left(
\nabla _{\partial /\partial \dot{x}^j}^\sigma
X_i^\sigma
\right) ,\\
0  &
=\left(
\nabla ^\sigma L_{X^\sigma }J
\right)
\left(
\frac{\partial }{\partial \dot{x}^i},
\frac{\partial }{\partial \dot{x}^j}
\right)
=\nabla _{\partial /\partial \dot{x}^j}^\sigma
\frac{\partial }{\partial \dot{x}^i}
-\left(
L_{X^\sigma }J
\right)
\left(
\nabla _{\partial/\partial \dot{x}^j}^\sigma
\frac{\partial }{\partial \dot{x}^i}
\right) .
\end{align*}
Recalling that $T^+(M^1)=\ker(L_{X^\sigma }J-I)$
and $T^-(M^1)=\ker(L_{X^\sigma }J+I)$,
from the previous formulas we deduce
$\nabla _{X^\sigma }^\sigma
\partial /\partial \dot{x}^i$,
$\nabla _{X_j^\sigma }^\sigma
\partial/\partial \dot{x}^i$,
$\nabla _{\partial/\partial \dot{x}^j}^\sigma
\partial /\partial \dot{x}^i$
(resp.\ $\nabla _{X^\sigma }^\sigma X_i^\sigma $,
$\nabla _{X_j^\sigma }^\sigma X_i^\sigma $,
$\nabla _{\partial/\partial \dot{x}^j}^\sigma X_i^\sigma $)
belong to $T^+(M^1)$ (resp.\ $T^-(M^1)$)
and therefore
{\small
\begin{equation}
\begin{array}
[c]{ccc}
\nabla _{X^\sigma }^\sigma
\partial /\partial \dot{x}^i
=A_i^k
\frac{\partial }{\partial \dot{x}^k},
& \nabla _{X_j^\sigma }^{\sigma }
\partial / \partial \dot{x}^i
=A_{ij}^k\partial/\partial \dot{x}^k, &
\nabla _{\partial/\partial \dot{x}^j}^\sigma
\partial /\partial \dot{x}^i
=B_{ij}^k\partial /\partial \dot{x}^k,\\
\nabla _{X^\sigma }^\sigma X_i^\sigma
=C_i^kX_k^\sigma ,
& \nabla _{X_j^\sigma }^\sigma X_i^\sigma
=C_{ij}^kX_k^\sigma ,
& \nabla _{\partial /\partial \dot{x}^j}^\sigma X_i^\sigma
=D_{ij}^kX_k^\sigma .
\end{array}
\label{c3}
\end{equation}
}
Furthermore, according to the definition of $E^\sigma $
in the statement, its local expression is seen to be
\begin{equation}
\label{E}
E^\sigma =\omega ^i\otimes
\frac{\partial }{\partial \dot{x}^i}
+\varpi ^i\otimes X_i^\sigma ,
\end{equation}
where $\omega ^i$, $\varpi^i$, and $X_i^\sigma $
are given by the formulas \eqref{J}, \eqref{varomega},
and \eqref{T^-}, respectively. Hence, from the item
(3) and taking the formulas \eqref{c3} and \eqref{E}
into account, we obtain
\begin{align*}
0  &
=\left(
\nabla ^\sigma E^\sigma
\right)
(X^\sigma ,X^\sigma )
=\nabla _{X^\sigma }^\sigma
\left(
E^\sigma (X^\sigma )
\right)
-E^\sigma (\nabla _{X^\sigma }^\sigma X^\sigma ),\\
0 &
=\left(
\nabla ^\sigma E^\sigma
\right)
\left(
\frac{\partial }{\partial \dot{x}^i},X^\sigma
\right)
=\left(
C_i^k-A_i^k
\right)
X_k^\sigma ,\\
0  &
=\left(
\nabla ^\sigma E^\sigma
\right)
\left(
X_i^\sigma ,X^\sigma
\right)
=\left(
A_i^k-C_i^k
\right)
\frac{\partial }{\partial \dot{x}^k},\\
0  &
=\left(
\nabla ^\sigma E^\sigma
\right)
\left(
\frac{\partial }{\partial \dot{x}^i},X_j^\sigma
\right)
=\left(
C_{ij}^k-A_{ij}^k
\right)
X_k^\sigma ,\\
0  &
=\left(
\nabla ^\sigma E^\sigma
\right)
\left(  X_i^\sigma ,X_j^\sigma
\right)
=\left(
A_{ij}^k-C_{ij}^k
\right)
\frac{\partial }{\partial \dot{x}^k},\\
0  &
=\left(
\nabla ^\sigma E^\sigma
\right)
\left(
X_i^\sigma ,\frac{\partial }{\partial \dot{x}^j}
\right)
=\left(
B_{ij}^k-D_{ij}^k
\right)
\frac{\partial }{\partial \dot{x}^k},\\
0  &
=\left(
\nabla ^\sigma E^\sigma
\right)
\left(
\frac{\partial }{\partial \dot{x}^i},
\frac{\partial }{\partial \dot{x}^j}
\right)
=\left(
D_{ij}^k-B_{ij}^k
\right)
X_k^\sigma .
\end{align*}
Hence $C_i^k=A_i^k$,
$C_{ij}^k=A_{ij}^k$,
$D_{ij}^k=B_{ij}^k$.
Therefore,
\begin{equation}
\begin{array}
[c]{ccc}
\nabla _{X^\sigma }^\sigma
\frac{\partial }{\partial \dot{x}^i}
=A_i^k\frac{\partial }{\partial \dot{x}^k},
& \nabla _{X_j^\sigma }^\sigma
\frac{\partial }{\partial \dot{x}^i}
=A_{ij}^k
\frac{\partial }{\partial \dot{x}^k},
& \nabla _{\partial/\partial \dot{x}^j}^\sigma
\frac{\partial }{\partial \dot{x}^i}
=B_{ij}^k
\frac{\partial }{\partial \dot{x}^k},\\
\nabla _{X^\sigma }^\sigma X_i^\sigma
=A_i^kX_k^\sigma ,
& \nabla _{X_j^\sigma }^\sigma X_i^\sigma
=A_{ij}^kX_k^\sigma ,
& \nabla _{\partial /\partial \dot{x}^j}^\sigma
X_i^\sigma
=B_{ij}^kX_k^\sigma .
\end{array}
\label{cc4}
\end{equation}
Let us finally impose the condition (4).
Taking the expression for $K^\sigma $
in \eqref{K_sigma} and the definitions
of $H^\sigma $, $\varepsilon^{-1}$
and $\left( L_{X^\sigma }J\right) ^v$
in \eqref{Hsigma}, \eqref{epsilon}
and \eqref{LXJ}, \eqref{vH} respectively,
the tensor field $T^\sigma $ defined
in \eqref{torsion} is written as:
\begin{equation}
T^\sigma =-P_j^idt\wedge \omega ^j\otimes
\frac{\partial }{\partial\dot {x}^i}
-\sum _{i<j}T_{ij}^k\omega ^i\wedge \omega ^j
\otimes\frac{\partial }{\partial \dot{x}^k}
+dt\wedge \varpi ^i\otimes X_i^\sigma .
\label{Torsion}
\end{equation}
Taking the equations \eqref{Torsion}
and \eqref{cc1} into account we have
\begin{align*}
X_i^\sigma
& =T^\sigma
\left(
X^\sigma ,
\frac{\partial }{\partial \dot{x}^i}
\right) \\
& =\nabla _{X^\sigma }^\sigma
\frac{\partial }{\partial \dot{x}^i}
-\left(
-X_i^\sigma
-\tfrac{1}{2}
\frac{\partial F^j}{\partial \dot{x}^i}
\frac{\partial }{\partial \dot{x}^j}
\right)  .
\end{align*}
Hence
\begin{equation}
\label{cc2}
\nabla _{X^\sigma }^\sigma
\frac{\partial }{\partial \dot{x}^i}
=-\tfrac{1}{2}
\frac{\partial F^j}{\partial \dot{x}^i}
\frac{\partial }{\partial \dot{x}^j}.
\end{equation}
Finally, taking \eqref{cc4} into account,
from $T^\sigma
\left(
X_i^\sigma ,\partial /\partial \dot{x}^j
\right)
=0$ we have
\begin{align*}
0  & =T^\sigma
\left(
X_i^\sigma ,
\frac{\partial }{\partial \dot{x}^j}
\right) \\
& =\nabla _{X_i^\sigma }^\sigma
\frac{\partial }{\partial \dot{x}^j}
-\nabla _{\partial /\partial \dot{x}^j}^\sigma
X_i^\sigma
+\tfrac{1}{2}
\frac{\partial^2F^k}
{\partial \dot{x}^j\partial \dot{x}^i}
\frac{\partial }{\partial \dot{x}^k}\\
& =\left(
A_{ji}^k
+\tfrac{1}{2}
\frac{\partial^2F^k}
{\partial \dot{x}^j\partial \dot{x}^i}
\right)
\frac{\partial }{\partial \dot{x}^k}
-B_{ij}^kX_k^\sigma ,
\end{align*}
and therefore
\begin{equation}
A_{ji}^k
=-\tfrac{1}{2}
\frac{\partial ^2F^k}{\partial \dot{x}^j
\partial \dot{x}^i},
\quad B_{ij}^k=0.
\label{c6}
\end{equation}
From \eqref{cc2}, \eqref{cc4}, and \eqref{c6}
we thus obtain
\begin{align*}
\nabla _{X^\sigma }^\sigma
\frac{\partial }{\partial \dot{x}^i}
& =-\tfrac{1}{2}
\frac{\partial F^j}{\partial \dot{x}^i}
\frac{\partial }
{\partial \dot{x}^k},
\quad
\nabla _{X_j^\sigma }^\sigma
\frac{\partial }{\partial \dot{x}^i}
=-\tfrac{1}{2}
\frac{\partial^2F^k}
{\partial \dot{x}^j\partial \dot{x}^i}
\frac{\partial }{\partial \dot{x}^k},
\quad
\nabla _{\partial /\partial \dot{x}^j}^\sigma
\frac{\partial }{\partial \dot{x}^i}=0,\\
\nabla _{X^\sigma }^\sigma X_i^\sigma
& =-\tfrac{1}{2}
\frac{\partial F^j}
{\partial \dot{x}^i}X_k^\sigma ,
\quad
\nabla _{X_j^\sigma }^\sigma X_i^\sigma
=-\tfrac{1}{2}\frac{\partial ^2F^k}
{\partial \dot{x}^j\partial \dot{x}^i}
X_k^\sigma ,
\quad
\nabla _{\partial /\partial \dot{x}^j}^\sigma
X_i^\sigma =0.
\end{align*}
These formulas together with \eqref{cc1}
are exactly the same as those in \eqref{Tabla}.
\end{proof}

\begin{remark}
From the characterization of the adjoint bundle
of the $G$-structure $\pi \colon P^\sigma \to  M^1$
given in Proposition \ref{Prop2.2} it follows that
the first structure tensor of $P^\sigma $, i.e.,
$T^\sigma \operatorname{mod}\mathrm{alt}
\left(
T^\ast M^1\otimes \mathrm{ad}P^\sigma
\right) $
in $\bigwedge ^2T^\ast M^1\otimes TM^1/\mathrm{alt}
\left(
T^\ast M^1\otimes\mathrm{ad}P^\sigma
\right) $
(e.g.\ see \cite{Guillemin}, \cite{LL}) never vanishes
and that $P^\sigma $ is not $1$-integrable.
\end{remark}

\begin{remark}
The item (4) in the theorem can be replaced
by the following weaker conditions:
$\mathrm{Tor}\nabla ^\sigma |_{T^-(M^1)\times T^+(M^1)}=0$,
$i_{X^\sigma }\mathrm{Tor}\nabla ^\sigma |_{T^+(M^1)}
=H^\sigma \circ \varepsilon ^{-1}$.
\end{remark}

\subsection{Second characterization}

\begin{theorem}
\label{thm1}
The Chern connection $\nabla ^\sigma $ attached
to a SODE $\sigma $ on $M$ is the only linear connection
on $M^1$ reducible to the $G$-structure
$\pi \colon P^\sigma \to  M^1$ introduced in
\emph{Proposition \ref{Prop2}} whose torsion tensor field
is given in the formula \emph{\eqref{torsion}}.
\end{theorem}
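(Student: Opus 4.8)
The plan is to follow verbatim the pattern of the fundamental theorem of Riemannian geometry, with the vanishing of the first prolongation of the structure algebra---here Proposition~\ref{Prop2.1}---playing the role that the vanishing of $\mathfrak o(n)^{(1)}$ plays there. Existence is already in hand: Proposition~\ref{Prop3} gives that $\nabla^\sigma$ is reducible to $P^\sigma$, and property~(4) of the first characterization theorem gives that its torsion is exactly the tensor field of \eqref{torsion}. So only uniqueness needs an argument.

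For uniqueness I would let $\nabla$ be an arbitrary linear connection on $M^1$ reducible to $P^\sigma$ whose torsion equals $T^\sigma$, and set $D=\nabla-\nabla^\sigma$. Then $D$ is $C^\infty(M^1)$-bilinear, i.e.\ a tensor field $D\in\Gamma(T^\ast M^1\otimes T^\ast M^1\otimes TM^1)$, and the claim is that $D$ lies pointwise in the first prolongation $\mathfrak g^{(1)}$ of $\mathfrak g$. To see this I would invoke two elementary facts. First, because $\nabla$ and $\nabla^\sigma$ are both reducible to $P^\sigma$, the difference of their connection forms restricted to $P^\sigma$ is a horizontal, $\mathrm{Ad}$-equivariant $\mathfrak g$-valued $1$-form on $P^\sigma$; equivalently (cf.\ \cite[p.\ 81]{KN}), $D$ is a $1$-form with values in the adjoint bundle, $D\in\Gamma(T^\ast M^1\otimes\mathrm{ad}P^\sigma)$, with $\mathrm{ad}P^\sigma$ as described explicitly in Proposition~\ref{Prop2.2}. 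Reading $D$ in any frame $u\in(P^\sigma)_\xi$, the endomorphism $Y\mapsto D_\xi(X,Y)$ of $T_\xi M^1$ then corresponds to an element of $\mathfrak g\subset\mathfrak{gl}(2n+1,\mathbb R)$ for every $X$, so $D_\xi\in V^\ast\otimes\mathfrak g$ with $V=\mathbb R^{2n+1}$. Second, since the torsion of a linear connection is $\mathrm{Tor}(X,Y)=\nabla_XY-\nabla_YX-[X,Y]$, the hypothesis $\mathrm{Tor}\,\nabla=\mathrm{Tor}\,\nabla^\sigma=T^\sigma$ gives $D(X,Y)=D(Y,X)$, so $D_\xi\in S^2V^\ast\otimes V$. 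Combining the two, $D_\xi\in(S^2V^\ast\otimes V)\cap(V^\ast\otimes\mathfrak g)=\mathfrak g^{(1)}$ for every $\xi\in M^1$; by Proposition~\ref{Prop2.1} this space is $\{0\}$, whence $D=0$ and $\nabla=\nabla^\sigma$.

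I do not expect a genuine obstacle: the statement is essentially a repackaging of Propositions~\ref{Prop2.1} and~\ref{Prop3}. The only points that need a little care are the identification used in the first fact---that the difference of two connections reducible to a $G$-structure is precisely a section of the adjoint bundle---and the index bookkeeping showing that the two constraints on $D$ (reducibility, equality of torsion) cut out exactly $\mathfrak g^{(1)}$ rather than something larger. Should a more self-contained route be preferred, the same conclusion follows by writing the connection form of an arbitrary $P^\sigma$-reducible connection in the local section $s$ of \eqref{reference}---where reducibility forces it to be $\mathfrak g$-valued, so it is determined by a single $\mathfrak{gl}(n,\mathbb R)$-valued $1$-form---and then solving the linear system imposed by $\mathrm{Tor}=T^\sigma$ directly, which reproduces Table~\eqref{Tabla}; this is longer, so I would present the prolongation argument as the main proof and only mention this computation as an alternative.
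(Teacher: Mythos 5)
Your proposal is correct and follows essentially the same route as the paper: existence via Proposition \ref{Prop3} together with the torsion computation (which you may legitimately import from item (4) of the first characterization, where the paper instead recomputes it from Table \eqref{Tabla}), and uniqueness by observing that the difference of two $P^\sigma$-reducible connections is a section of $T^\ast M^1\otimes\mathrm{ad}P^\sigma$ which, when the torsions agree, is symmetric and hence lies in $\left(S^2T^\ast M^1\otimes TM^1\right)\cap\left(T^\ast M^1\otimes\mathrm{ad}P^\sigma\right)$, a bundle that vanishes by Proposition \ref{Prop2.1}. This is exactly the paper's argument, so no further changes are needed.
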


\begin{proof}
According to Proposition \ref{Prop3} the Chern connection
is reducible to $P^\sigma $. Furthermore, from the formulas
in \eqref{Tabla} the torsion of $\nabla ^\sigma $ is readily
computed, namely,
\[
\begin{array}
[c]{ll}
\mathrm{Tor}\nabla ^\sigma
\left(  X^\sigma ,X_j^\sigma
\right)
=-P_j^i\frac{\partial }{\partial \dot{x}^i},
& \mathrm{Tor}\nabla ^\sigma
\left(
X^\sigma ,\frac{\partial }{\partial \dot{x}^i}
\right)
=X_i^\sigma ,\smallskip \\
\mathrm{Tor}\nabla ^\sigma
\left(
X_i^\sigma ,X_j^\sigma
\right)
=-T_{ij}^k\frac{\partial }{\partial \dot{x}^k},
& \mathrm{Tor}\nabla ^\sigma
\left(
X_i^\sigma ,\frac{\partial }{\partial \dot{x}^j}
\right)
=0,\smallskip \\
\mathrm{Tor}\nabla ^\sigma
\left(
\frac{\partial }{\partial \dot{x}^i},
\frac{\partial }{\partial \dot{x}^j}
\right) =0,
&
\end{array}
\]
where the functions $P_j^i$, $T_{ij}^k$
are defined in the formulas \eqref{P's},
\eqref{T's}, respectively. Hence,
from the above equations and the formula
\eqref{Torsion}, one obtains
$\mathrm{Tor}\nabla ^\sigma =T^\sigma $.

If $\nabla $ is the covariant derivative
of another linear connection on $M^1$
reducible to $P^\sigma $, then
(e.g., see \cite[Proposition I.1]{LL})
a unique section $h$ of the vector subbundle
$T^\ast (M^1)\otimes \mathrm{ad}P^\sigma
\subseteq T^\ast (M^1)\otimes T^\ast (M^1)
\otimes T(M^1)$ exists such that
$\nabla _XY=\nabla _X^\sigma Y+h(X,Y)$,
$\forall X,Y\in \mathfrak{X}(M^1)$.
If the torsion tensor field of $\nabla $
coincides with that of $\nabla ^\sigma $,
then $h$ takes values in
\[
(S^2T^\ast M^1\otimes TM^1)
\cap (T^\ast M^1\otimes \mathrm{ad}P^\sigma ).
\]
But this vector bundle vanishes by virtue
of Proposition \ref{Prop2.1}.
\end{proof}

\section{Curvature of $\nabla ^\sigma $ and characteristic classes}
\label{section_curvature}

\begin{lemma}
\label{lemma}Let $R^\sigma $ be the curvature
tensor of the Chern connection $\nabla ^\sigma $
of a SODE $\sigma$ on $M$.

\begin{enumerate}
\item[\emph{(i)}]
The condition
$R^\sigma \left( X^\sigma ,Y\right) Z=0$,
$\forall Y,Z\in T^-(M^1)$ is equivalent
to the condition
$R^\sigma \left( X^\sigma ,Y\right) Z=0$,
$\forall Y\in T^-(M^1)$, $\forall Z\in T^+(M^1)$.

\item[\emph{(ii)}]
The condition $R^\sigma (X,Y)Z=0$,
$\forall X,Y,Z\in T^-(M^1)$
is equivalent to the condition $R^\sigma (X,Y)Z=0$,
$\forall X,Y\in T^-(M^1)$, $\forall Z\in T^+(M^1)$.

\item[\emph{(iii)}]
The condition $R^\sigma (X,Y)Z=0$,
$\forall X,Z\in T^-(M^1)$,
$\forall Y\in T^+(M^1)$ is equivalent
to the condition
$R^\sigma (X,Y)Z=0$, $\forall X\in T^-(M^1)$,
$\forall Y,Z\in T^+(M^1)$.
\end{enumerate}
\end{lemma}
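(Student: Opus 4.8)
The plan is to avoid any direct manipulation of the curvature formulas and instead exploit that $\nabla^\sigma$ is a linear connection reducible to the $G$-structure $\pi\colon P^\sigma\to M^1$. First I would recall the standard fact (e.g.\ \cite[Chapter~II]{KN}) that the curvature of a linear connection reducible to a $G$-structure takes its values in the adjoint bundle: by Proposition \ref{Prop3}, $\nabla^\sigma$ comes from a principal connection on $P^\sigma$, and hence for every pair $X,Y\in\mathfrak{X}(M^1)$ the endomorphism $R^\sigma(X,Y)\colon T(M^1)\to T(M^1)$ is a local section of $\mathrm{ad}P^\sigma\subseteq T^\ast M^1\otimes TM^1$. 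This is the only external input needed.

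Next I would feed this into the explicit description of $\mathrm{ad}P^\sigma$ furnished by Proposition \ref{Prop2.2}: every $E\in\mathrm{ad}P^\sigma$ annihilates $T^0(M^1)$, maps each of $T^-(M^1)$ and $T^+(M^1)$ into itself, and its two restrictions are conjugated by the isomorphism $\Phi:=H^\sigma\circ\varepsilon^{-1}\colon T^+(M^1)\xrightarrow{\ \cong\ }T^-(M^1)$, namely $E|_{T^-(M^1)}\circ\Phi=\Phi\circ E|_{T^+(M^1)}$. Since $\Phi$ is an isomorphism, this conjugation relation gives at once the crucial observation: an element $E\in\mathrm{ad}P^\sigma$ satisfies $E|_{T^-(M^1)}=0$ if and only if $E|_{T^+(M^1)}=0$. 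Applying this to $E=R^\sigma(X,Y)$ for a \emph{fixed} pair $(X,Y)$, one obtains that $R^\sigma(X,Y)$ vanishes on $T^-(M^1)$ precisely when it vanishes on $T^+(M^1)$.

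All three equivalences are then instances of this single statement, read off for three particular choices of the first two slots of $R^\sigma$. For (i) I would fix the first slot to be $X^\sigma$ and, for each $Y\in T^-(M^1)$ separately, interpret ``$R^\sigma(X^\sigma,Y)Z=0$ for all $Z\in T^-(M^1)$'' as ``$R^\sigma(X^\sigma,Y)|_{T^-(M^1)}=0$'' and likewise on the $T^+$ side; quantifying over $Y\in T^-(M^1)$ yields the asserted equivalence. Parts (ii) and (iii) are handled verbatim, fixing the first two arguments of $R^\sigma$ to $(X,Y)\in T^-(M^1)\times T^-(M^1)$ and $(X,Y)\in T^-(M^1)\times T^+(M^1)$ respectively; in each of the three cases only the last argument $Z$ is moved between $T^-(M^1)$ and $T^+(M^1)$ while the first two stay fixed, so the observation from the previous paragraph applies pointwise in $(X,Y)$.

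I do not anticipate a serious obstacle here: once the curvature is recognised as $\mathrm{ad}P^\sigma$-valued, the argument is pure bookkeeping. The one point deserving care is checking that in each of (i)--(iii) it is indeed only the third argument of $R^\sigma$ whose eigenspace varies (which is the case), so that the conjugation trick via $\Phi$ may be invoked without moving the first two slots. As a sanity check, or as a self-contained alternative to quoting Proposition \ref{Prop2.2}, one could instead derive the three equivalences directly from the covariant-derivative table \eqref{Tabla}, but that route is longer and less transparent.
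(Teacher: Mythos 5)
Your proof is correct, but it takes a genuinely different route from the paper. The paper proves the lemma by brute force: it computes all the nonzero components of $R^\sigma$ in the adapted frame $(X^\sigma ,X_i^\sigma ,\partial /\partial \dot{x}^i)$, exhibiting the formulas \eqref{R_sigma} with coefficients $A_{ij}^h$, $B_{ijk}^h$, $R_{ijk}^h$ as in \eqref{A's}--\eqref{R's}, and reads off the three equivalences from the fact that the same coefficient matrices govern the action of $R^\sigma (X,Y)$ on $T^-(M^1)$ and on $T^+(M^1)$. You instead argue structurally: reducibility of $\nabla ^\sigma $ to $P^\sigma $ (Proposition \ref{Prop3}) forces $R^\sigma (X,Y)$ to be $\mathrm{ad}P^\sigma $-valued, and the description of $\mathrm{ad}P^\sigma $ in Proposition \ref{Prop2.2} shows that any such endomorphism has its restrictions to $T^-(M^1)$ and $T^+(M^1)$ conjugated by the isomorphism $H^\sigma \circ \varepsilon ^{-1}$, so one restriction vanishes iff the other does; applying this with the first two slots of $R^\sigma $ fixed in the appropriate eigenspaces gives (i)--(iii) at once. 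This is essentially the content of the remark the paper places immediately after the lemma (the identity $R^\sigma (X,Y)|_{T^-}\circ H^\sigma \circ \varepsilon ^{-1}=H^\sigma \circ \varepsilon ^{-1}\circ R^\sigma (X,Y)|_{T^+}$), except that you obtain it a priori rather than as a byproduct of the computation. Your argument is shorter and more conceptual; what the paper's computation buys is the explicit formulas \eqref{A's}, \eqref{B's}, \eqref{R's}, which are not incidental, since they are used later in Theorem \ref{CurvatureTheorem} and in Section \ref{section_holonomy}, so for the paper's purposes the computational proof does double duty.
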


\begin{proof}
The result immediately follows from
the following formulas:
\begin{equation}
\begin{array}
[c]{ll}
R^\sigma
\left(
X^\sigma ,X_j^\sigma
\right)
X_i^\sigma =A_{ij}^hX_h^\sigma ,
& R^\sigma
\left(  X^\sigma ,X_j^\sigma
\right)
\dfrac{\partial }{\partial \dot{x}^i}
=A_{ij}^h
\dfrac{\partial }{\partial \dot{x}^h},
\smallskip\\
R^\sigma
\left(
X_i^\sigma ,X_j^\sigma
\right)
X_k^\sigma =B_{ijk}^hX_h^\sigma ,
& R^\sigma
\left(
X_i^\sigma ,X_j^\sigma
\right)
\dfrac{\partial }{\partial \dot{x}^k}
=B_{ijk}^h
\dfrac{\partial }{\partial \dot{x}^h}
\; (i<j), \smallskip\\
R^\sigma
\left(
X_i^\sigma ,\dfrac{\partial }{\partial \dot{x}^j}
\right)
X_k^\sigma =R_{ijk}^hX_h^\sigma ,
& R^\sigma
\left(
X_i^\sigma ,\dfrac{\partial }{\partial \dot{x}^j}
\right)
\dfrac{\partial }{\partial \dot{x}^k}
=R_{ijk}^h\dfrac{\partial }{\partial \dot{x}^h},
\end{array}
\label{R_sigma}
\end{equation}
where
\begin{align}
2A_{kj}^h & =T_{jk}^h
-\frac{\partial P_k^h}{\partial \dot{x}^j}
-\frac{\partial P_j^h}{\partial \dot{x}^k},
\label{A's}\\
B_{ijk}^h  &
=-\frac{\partial T_{ij}^h}{\partial \dot{x}^k}\;(i<j),
\label{B's}\\
R_{ijk}^h & =\tfrac{1}{2}
\dfrac{\partial ^3F^h}
{\partial \dot{x}^i\partial \dot{x}^j\partial \dot{x}^k},
\label{R's}
\end{align}
the functions $T_{jk}^h$, $P_k^h$ being respectively
defined in \eqref{T's}, \eqref{P's}, and the rest
of components of the curvature tensor vanishes.
\end{proof}

\begin{remark}
The items (i), (ii), and (iii) above are a simple
consequence of the following formula:
\[
\left.
R^\sigma
\left(
X,Y
\right)
\right\vert _{T^-(M^1)}
=H^\sigma \circ \varepsilon ^{-1}\circ
\left.
R^\sigma
\left(
X,Y
\right)
\right\vert _{T^+(M^1)},
\;\forall X,Y\in T(M^1),
\]
which is also deduced from \eqref{R_sigma}.
\end{remark}

\begin{theorem}
\label{CurvatureTheorem}
Let $R^\sigma $ be the curvature tensor of the Chern
connection $\nabla ^\sigma $ of a SODE $\sigma$ on $M$.

\begin{enumerate}
\item[\emph{(a)}]
If one of the equivalent conditions in items \emph{(ii)}
and \emph{(iii) of Lemma \ref{lemma}} hold on a neighbourhood
of a point $x\in M$, then there exists a fibred coordinate
system $(t,x^{\prime i})$ centred at $x$ on $M^0$ such that
$\ddot{x}^{\prime i}\circ \sigma$ is polynomial of first
degree in $\dot{x}^{\prime 1},\dotsc,\dot{x}^{\prime n}$
for every $1\leq i\leq n$.

\item[\emph{(b)}]
If one of the equivalent conditions in items \emph{(i)}
and \emph{(iii) of Lemma \ref{lemma}} hold on a neighbourhood
of a point $x\in M$, then there exists a fibred coordinate
system $(t,x^{\prime i})$ centred at $x$ on $M^0$ such that
$\ddot{x}^{\prime i}\circ \sigma \in C^\infty (M^0)$
for every $1\leq i\leq n$ \emph{(cf.\ \cite[p.\ 621]{Cartan},
\cite[Theorem 7]{CrampinMartinezSarlet})}.

\item[\emph{(c)}] If one of the equivalent conditions in item
\emph{(iii) of Lemma \ref{lemma}} holds and $K^\sigma $ vanishes
on a neighbourhood of a point $x\in M$, then there exists
a fibred coordinate system $(t,x^{\prime i})$ centred at $x$
on $M^0$ such that $\ddot{x}^{\prime i}\circ \sigma =0$ for
every $1\leq i\leq n$ \emph{(cf.\ \cite[p. 621]{Cartan},
\cite[Theorem 6]{CrampinMartinezSarlet})}.
\end{enumerate}
\end{theorem}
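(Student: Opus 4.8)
The plan is to read off the statement from the explicit curvature components computed in Lemma~\ref{lemma} and to realize the vanishing conditions as flatness statements for suitable (partial) connections, so that a Frobenius/parallel-frame argument produces the desired coordinates. More precisely, recall from \eqref{R_sigma}--\eqref{R's} that, in the adapted frame, the only possibly non-zero curvature components are $A^h_{ij}$ (involving $X^\sigma$), $B^h_{ijk}$ (the derivatives $-\partial T^h_{ij}/\partial\dot x^k$), and $R^h_{ijk}=\tfrac12\partial^3F^h/\partial\dot x^i\partial\dot x^j\partial\dot x^k$. By Lemma~\ref{lemma}, the condition in (ii)/(iii) says precisely that $R^h_{ijk}=0$, i.e. $\partial^3F^h/\partial\dot x^i\partial\dot x^j\partial\dot x^k\equiv0$ on a neighbourhood; the condition in (i)/(iii) additionally forces $A^h_{ij}=0$; and in (c) we moreover have $K^\sigma=0$, which by Proposition~\ref{Prop1} is the simultaneous vanishing of all $T^k_{ij}$ and $P^i_j$.

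For part~(a), I would first observe that $\partial^3F^h/\partial\dot x^i\partial\dot x^j\partial\dot x^k\equiv0$ means each $F^h$ is, in the given chart, a polynomial of degree $\le 2$ in the velocities $\dot x^1,\dots,\dot x^n$. The task is then to kill the purely quadratic part by a fibred change of coordinates $x^{\prime i}=x^{\prime i}(t,x)$. Under such a change the velocities transform linearly, $\dot x^{\prime i}=(\partial x^{\prime i}/\partial x^j)\dot x^j+\partial x^{\prime i}/\partial t$, and the transformed $F^{\prime i}$ again has degree $\le2$ in $\dot x'$; the quadratic coefficients transform tensorially (essentially as the fibre components of the curvature $R^h_{ijk}$, which is the obstruction). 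Since that obstruction vanishes by hypothesis, an elementary argument—this is the $\dot x$-fibre analogue of choosing normal coordinates along the affine fibres of $p^{10}$, or equivalently integrating the flat partial connection in the $T^+(M^1)$-directions whose curvature is $R^\sigma|_{T^+}$—produces coordinates in which the quadratic terms disappear, leaving $\ddot x^{\prime i}\circ\sigma$ affine in $\dot x'$. One has to check that the integrability needed is exactly $R^h_{ijk}=0$, which is precisely what \eqref{R's} and Lemma~\ref{lemma}(ii)/(iii) give.

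For part~(b), having (after part~(a), since (i)/(iii) implies (ii)/(iii)) reduced to $\ddot x^{\prime i}\circ\sigma=f^i_0(t,x)+f^i_j(t,x)\dot x^j$, the extra vanishing $A^h_{ij}=0$ from Lemma~\ref{lemma}(i) translates via \eqref{A's} into a condition on the remaining coefficients that says the "connection part" $f^i_j$ is flat; I would then interpret $f^i_j$ as a linear connection on the bundle $M^0\to\mathbb R$ along the fibre $M$ (equivalently, restrict $\nabla^\sigma$ to $T^-(M^1)$, whose curvature in these coordinates is governed by the $A$'s and $B$'s) and choose a parallel frame, i.e. coordinates in which $f^i_j\equiv0$; this leaves $\ddot x^{\prime i}\circ\sigma=f^i_0(t,x)\in C^\infty(M^0)$. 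Finally for part~(c), starting from the situation of~(b) with $f^i_j$ already killed and now also $K^\sigma=0$—hence all $T^k_{ij}=0$ and all $P^i_j=0$ by \eqref{K_sigma}—the remaining function $f^i_0(t,x)$ satisfies, via the $P$-equations \eqref{P's} (which with $F^i$ affine reduce to $\partial f^i_0/\partial x^j=0$-type relations together with the $t$-derivative terms), the closedness needed to be written as a gradient; choosing $x^{\prime i}$ to absorb it gives $\ddot x^{\prime i}\circ\sigma=0$, i.e. the geodesic-like form $\ddot x^{\prime i}=0$.

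The main obstacle I expect is bookkeeping in part~(a): carefully verifying that the transformation rule for the quadratic velocity-coefficients of $F^i$ under a fibred coordinate change is exactly the one for which $R^h_{ijk}$ in \eqref{R's} is the integrability obstruction, so that vanishing curvature really does let one solve the relevant (over-determined) PDE system for the new coordinates. Parts~(b) and~(c) are then standard "flat connection $\Rightarrow$ parallel frame" and "closed form $\Rightarrow$ primitive" arguments, but one must be careful that the successive coordinate changes used in (a), (b), (c) do not reintroduce the terms killed at the previous stage—this is automatic because each stage only uses fibred changes $x'=x'(t,x)$, under which the degree in $\dot x$ can only drop, and the flatness/closedness conditions are coordinate-independent (they are curvature components of $\nabla^\sigma$), so they persist.
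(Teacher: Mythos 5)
Your reading of the hypothesis in part (a) is a genuine error, and it breaks the argument. Lemma \ref{lemma} does not assert that items (ii) and (iii) are equivalent to one another: item (ii) is the vanishing of the components $B_{ijk}^h$ and item (iii) is the vanishing of $R_{ijk}^h$, and the hypothesis of (a) is that \emph{both} hold. Your proposal uses only $R_{ijk}^h=0$, which merely says each $F^h$ is a polynomial of degree $\leq 2$ in the velocities; that alone cannot give the conclusion. The quadratic coefficients $F^h_{ij}$ do not transform tensorially under a fibred change $x^{\prime i}=x^{\prime i}(t,x)$ — they transform like (minus) Christoffel symbols, as in Remark \ref{remark_connection} — and the overdetermined system one must solve to kill them, namely $\frac{\partial ^2x^{\prime i}}{\partial x^a\partial x^b}+\frac{\partial x^{\prime i}}{\partial x^c}F^c_{ab}=0$ for each fixed $t$, has as its integrability condition the flatness of the $t$-dependent symmetric connection $\Gamma ^h_{ab}=-F^h_{ab}$, i.e.\ exactly $B_{ijk}^h=0$ in the form \eqref{Bkijr}, \emph{not} $R_{ijk}^h=0$. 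The geodesic spray of a non-flat metric is a counterexample to your version of (a): there $R_{ijk}^h=0$ but $B_{ijk}^h=\tilde{R}^h_{kij}\neq 0$, and no fibred coordinates make the equation affine in the velocities. So the ``main obstacle'' you flag is precisely where the proposal fails; the correct route (the paper's) is: $R_{ijk}^h=0$ gives the quadratic form \eqref{quadratic}, then item (ii) turns \eqref{Bkijr} into the flatness of $\nabla ^t$, and flat affine coordinates depending smoothly on $t$ remove the quadratic part.

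Parts (b) and (c) are structurally sound but inherit the gap, and one step is asserted rather than proved: your parenthetical claim that the hypothesis of (b) implies that of (a) requires the computation $2A^h_{kj}=2B^h_{jka}\dot{x}^a+(\text{terms independent of }\dot{x})$, so that $A=0$, being an identity in $\dot{x}$, forces $B=0$; the paper does exactly this. Granting (a) and (b), your argument for (c) is correct and is in fact a shorter route than the paper's: since $K^\sigma =0$ is tensorial it persists after the reduction of (b), and with $F^i$ independent of $\dot{x}$ the formula \eqref{P's} gives $P^i_j=-\partial f^i_0/\partial x^j$, so $f^i_0=f^i_0(t)$ and two integrations in $t$ (a translation $x^{\prime i}=x^i-c^i(t)$) finish; the paper instead builds an auxiliary symmetric connection on $M^0$ (including the $t$-direction), proves it flat from \eqref{e_1}--\eqref{e_4}, and parallelizes, which avoids invoking (b) but is longer.
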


\begin{proof}
According to the formulas \eqref{R_sigma},
the hypothesis in item (a) means $B_{ijk}^h=0$
and $R_{ijk}^h=0$. Taking the equations
\eqref{R's} into account, we obtain
\begin{equation}
\label{quadratic}
F^h=F_{ij}^h\dot{x}^i\dot{x}^j
+F_i^h\dot{x}^i+F_0^h,
\quad
F_{ij}^h=F_{ji}^h,
\;F_0^h,F_i^h,F_{ij}^h\in C^\infty (M^0).
\end{equation}
Substituting \eqref{quadratic} into the expression
for $B_{ijr}^k$ in \eqref{B's} and taking the formula
\eqref{T's} into account, we have
\begin{equation}
\label{Bkijr}
B_{ijr}^k=\frac{\partial F_{jr}^k}{\partial x^i}
-\frac{\partial F_{ir}^k}{\partial x^j}
+F_{ir}^hF_{hj}^k-F_{jr}^hF_{hi}^k.
\end{equation}
For every fixed value $t\in \mathbb{R}$, a symmetric linear
connection $\nabla ^t$ on $M$ can be defined by imposing
that its Christoffel symbols in the coordinate system
$(x^i)_{i=1}^{n}$ are $\Gamma _{ij}^h=-F_{ij}^h$,
and from the formulas \eqref{Bkijr} we conclude
that all the components of the curvature tensor
of $\nabla ^t$ vanish. Hence there exists a coordinate
system $(x^{\prime i})_{i=1}^n$ depending smoothly on $t$,
such that $F_{ij}^{\prime h}=0$ and the result follows.

Similarly, the hypothesis in item (b) means $A_{ij}^h=0$
and $R_{ijk}^h=0$. Hence the formulas \eqref{quadratic}
for $F^1,\dotsc,F^{n}$ also hold in this case and substituting
them into \eqref{A's} recalling the expressions \eqref{T's}
and \eqref{P's}, we obtain
\begin{equation*}
2A_{kj}^h=2B_{jka}^h\dot{x}^a
-2\frac{\partial F_{jk}^h}{\partial t}
+\frac{\partial F_k^h}{\partial x^j}
+F_r^hF_{jk}^r-F_k^rF_{rj}^h.
\end{equation*}

Hence the equations \eqref{Bkijr} again hold and,
in addition, we have
\begin{equation}
\label{A2}
0=-2\frac{\partial F_{jk}^h}{\partial t}
+\frac{\partial F_j^h}{\partial x^k}
+F_r^hF_{jk}^r-F_k^rF_{rj}^h.
\end{equation}

By virtue of (a) and making a change of coordinates,
we can further assume $F_{ij}^h=0$; hence
$F^h=F_i^h\dot{x}^i+F_0^h$, and the equations \eqref{A2}
simply mean that the functions $F_j^h$ are independent
of the coordinates $x^1,\dotsc,x^n$, i.e., they depend
on $t$ only. If we look for a fibred coordinate system
$(x^{\prime i})_{i=1}^n$ centred at $x$ on $M^0$ such that
$\ddot{x}^{\prime i}\circ\sigma\in C^\infty (M^0)$,
for $1\leq i\leq n$, then we obtain
\[
x_{x^ax^b}^{\prime i}=0,
\quad
2x_{tx^b}^{\prime i}+F_b^jx_{x^j}^{\prime i}=0.
\]
The first group of equations above is equivalent to saying
$x^{\prime i}=u_j^i(t)\dot{x}^j+u_0^i(t)$, and from the second
group we obtain $\dot{u}_b^i=-\frac{1}{2}F_b^ju_j^i$,
which is a system of ordinary differential equations
on the unknown functions $u_j^i$, thus proving (b).

Finally, the tensors $K^\sigma $ vanishes if, and only if,
$P_j^i=0$ and $T_{ij}^k=0$, as follows from the expression
of $K^\sigma $ in \eqref{K_sigma}. From $R_{ijk}^h=0$
we again deduce the equations \eqref{quadratic}
and substituting them into the formulas \eqref{T's},
\eqref{P's}, and letting $T_{ij}^k=0$, $P_j^i=0$, we obtain
\[
0=\left(
\!
\frac{\partial F_{jr}^k}{\partial x^i}
-\frac{\partial F_{ir}^k}{\partial x^j}
+F_{ir}^hF_{hj}^k-F_{jr}^hF_{hi}^k
\! \right)
\dot{x}^r
+\tfrac{1}{2}
\left(
\!
\frac{\partial F_j^k}{\partial x^i}
-\frac{\partial F_i^k}{\partial x^j}
+F_i^hF_{hj}^k-F_j^hF_{hi}^k
\! \right) \! ,
\]
\begin{align*}
0 &
=\left(
\frac{\partial F_{ja}^i}{\partial x^b}
-\frac{\partial F_{ab}^i}{\partial x^j}
-F_{ja}^kF_{kb}^i+F_{ab}^rF_{jr}^i
\right)
\dot{x}^a\dot{x}^b\\
& +\left(
\frac{\partial F_{aj}^i}{\partial t}
+\tfrac{1}{2}\frac{\partial F_j^i}
{\partial x^a}
-\frac{\partial F_a^i}{\partial x^j}
-\tfrac{1}{2}F_k^iF_{aj}^k
-\tfrac{1}{2}F_j^kF_{ak}^i
+F_a^hF_{hj}^i
\right)
\dot{x}^a\\
& +\tfrac{1}{2}
\frac{\partial F_j^i}{\partial t}
-\frac{\partial F_0^i}{\partial x^j}
-\tfrac{1}{4}F_j^kF_k^i+F_0^hF_{hj}^i.
\end{align*}
Hence,
\begin{align}
0 & =\frac{\partial F_{jr}^k}{\partial x^i}
-\frac{\partial F_{ir}^k}{\partial x^j}
+F_{ir}^hF_{hj}^k-F_{jr}^hF_{hi}^k,
\label{e_1}\\
0 & =\frac{\partial F_{aj}^i}{\partial t}
+\tfrac{1}{2}\frac{\partial F_j^i}{\partial x^a}
-\frac{\partial F_a^i}{\partial x^j}
-\tfrac{1}{2}F_k^iF_{aj}^k
-\tfrac{1}{2}F_j^kF_{ak}^i+F_a^hF_{hj}^i,
\label{e_2}\\
0 & =\tfrac{1}{2}
\frac{\partial F_j^i}{\partial t}
-\frac{\partial F_0^i}{\partial x^j}
-\tfrac{1}{4}F_j^kF_k^i+F_0^rF_{jr}^i,
\label{e_3}\\
0 & =\frac{\partial F_j^k}{\partial x^i}
-\frac{\partial F_i^k}{\partial x^j}
+F_i^hF_{hj}^k-F_j^hF_{hi}^k.\label{e_4}
\end{align}
Letting $x^0=t$, an auxiliary symmetric linear
connection $\nabla $ can be defined on $M^0$
by giving its Christoffel symbols as follows:
\[
\Gamma _{00}^0
=\Gamma _{0i}^0
=\Gamma _{ij}^0=0,
\;\Gamma _{00}^h=-F_0^h,
\;\Gamma _{i0}^h=-\tfrac{1}{2}F_i^h,
\;\Gamma _{ij}^h=-F_{ij}^h,
\]
and, as a computation shows, we obtain
\[
R_{0kl}^0=0,
\;R_{jkl}^0=0,
\;R_{j00}^0=0,
\;R_{j0l}^0=0,
\]
\begin{align*}
R_{0kl}^i
& =-\tfrac{1}{2}
\frac{\partial F_{l}^i}{\partial x^k}
+\tfrac{1}{2}
\frac{\partial F_k^i}{\partial x^l}
+\tfrac{1}{2}F_l^hF_{hk}^i
-\tfrac{1}{2}F_k^hF_{hl}^i
\underset{\text{\eqref{e_4}}}{=}0,\\
R_{jkl}^i
& =\frac{\partial F_{kj}^i}{\partial x^l}
-\frac{\partial F_{lj}^i}{\partial x^k}
+F_{lj}^hF_{kh}^i-F_{kj}^hF_{lh}^i
\underset{\text{\eqref{e_1}}}{=}0,\\
R_{j00}^i
& =-\tfrac{1}{2}
\frac{\partial F_j^i}{\partial t}
+\frac{\partial F_0^i}{\partial x^l}
+\tfrac{1}{4}F_j^hF_h^i-F_0^hF_{hl}^i
\underset{\text{\eqref{e_3}}}{=}0,\\
R_{j0l}^i
& =-\frac{\partial F_{lj}^i}{\partial t}
+\tfrac{1}{2}
\frac{\partial F_j^i}{\partial x^{l}}
+\tfrac{1}{2}F_{lj}^hF_h^i
-\tfrac{1}{2}F_j^hF_{hl}^i
\underset{\text{\eqref{e_4}--\eqref{e_2}}}{=}0.
\end{align*}
Hence $\nabla $ is flat. Consequently, there exists
a coordinate system
$(x^{\prime 0},x^{\prime 1},\dotsc,x^{\prime n})$
on $M^0$ parallelizing $\nabla $. Moreover, we have
$\nabla (dt)=0$ (which is equivalent to saying
$\Gamma _{00}^0=\Gamma _{0i}^0=\Gamma _{ij}^0=0$)
and hence for all $0\leq i\leq n$,
$X\in \mathfrak{X}(M^0)$,
\[
0=\nabla _X(dt)
\left(
\frac{\partial }{\partial x^{\prime i}}
\right)
=X\left(
dt
\left(
\frac{\partial }{\partial x^{\prime i}}
\right)
\right)
-dt
\left(
\nabla _X\frac{\partial }{\partial x^{\prime i}}
\right) ,
\]
thus proving that the function
$\partial t/\partial x^{\prime i}$ is a constant;
accordingly we can further assume $x^{\prime 0}=t$,
which ends the proof of (c).
\end{proof}
\begin{remark}
\label{remark_connection}
There is a natural and bijective correspondence
between homogeneous quadratic SODE
$\sigma $ (i.e., $(\partial /\partial t)^v=0$
in Table \ref{vH}) independent of $t$ and
symmetric linear connections on $M$.
Actually, given a symmetric linear connection
$\tilde{\nabla }$ on $M$ we can define a section
$\sigma _{\tilde{\nabla }}\colon M^1\to M^2$
as follows: If $\xi =j^1_{t_0}\gamma $,
then there exists a unique geodesic $\tilde{\gamma }$
for $\tilde{\nabla }$ such that,
i) $\tilde{\gamma }(t_0)=\gamma (t_0)$,
ii) $\tilde{\gamma }_\ast (d/dt|_{t_0})
=\gamma _\ast (d/dt|_{t_0})$. Then
$\sigma _{\tilde{\nabla }}(\xi )
=j^2_{t_0}\tilde{\gamma }$. On a coordinate system
$(x^i)$ from the equations of geodesics we deduce
the equations for $\sigma _{\tilde{\nabla }}$, namely,
\[
\ddot{x}^h\circ \sigma _{\tilde{\nabla }}
=-\tilde{\Gamma }_{ij}^h\dot{x}^i\dot{x}^j,
\]
where $\tilde{\Gamma }_{ij}^h$ are the Christoffel
symbols of $\tilde{\nabla }$. Conversely, if
$\ddot{x}^h\circ \sigma
=F_{ij}^h\dot{x}^i\dot{x}^j$
and $(x^{\prime i})$ is another coordinate
system overlapping $(x^i)$, then
\[
F_{jk}^{\prime h}
\frac{\partial x^{\prime j}}
{\partial x^a}
\frac{\partial x^{\prime k}}
{\partial x^b}
=\frac{\partial ^2x^{\prime h}}
{\partial x^a\partial x^b}
+\frac{\partial x^{\prime h}}
{\partial x^i}F_{ab}^i,
\]
and this equation is readily seen
to be equivalent to the transformation
rule of Christoffel's symbols;
e.g., see \cite[III, Proposition 7.2]{KN}.
In this case, $\nabla ^\sigma $
is completely determined by
$\tilde{\nabla }_{\partial /\partial x^l}
\partial /\partial x^j
=-F^h_{ij}\partial /\partial x^h$,
by means of the following formulas:
\[
\begin{array}
[c]{lll}
(\Gamma ^\sigma)_{tt}^t=0,
& (\Gamma ^\sigma )_{x^it}^t=0,
& (\Gamma ^\sigma )_{\dot{x}^it}^t=0,\\
(\Gamma ^\sigma )_{tt}^{x^j}=0,
& (\Gamma ^\sigma )_{x^ht}^{x^j}
=\dot{x}^kF_{hk}^j,
& (\Gamma ^\sigma )_{\dot{x}^kt}^{x^j}
=-\delta_k^j,\\
(\Gamma ^\sigma )_{tt}^{\dot{x}^k}=0,
& (\Gamma ^\sigma )_{x^it}^{\dot{x}^k}
=F_{hr}^kF_{is}^h\dot{x}^s\dot{x}^r,
& (\Gamma ^\sigma )_{\dot{x}^it}^{\dot{x}^k}
=-F_{ri}^k\dot{x}^r,\\
(\Gamma ^\sigma )_{tx^i}^t
=0,
& (\Gamma ^\sigma )_{x^jx^i}^{t}
=0,
& (\Gamma ^\sigma )_{\dot{x}^jx^i}^t
=0,\\
(\Gamma ^\sigma )_{tx^i}^{x^j}=0,
& (\Gamma ^\sigma )_{x^kx^i}^{x^j}
=-F_{ik}^j,
& (\Gamma ^\sigma )_{\dot{x}^kx^i}^{x^j}
=0,\\
(\Gamma ^\sigma )_{tx^i}^{\dot{x}^j
}=0,
& (\Gamma ^\sigma )_{x^kx^i}^{\dot{x}^j}
=-\left(
\tfrac{\partial F_{ir}^j}{\partial x^k}
+F_{ik}^hF_{hr}^j-F_{kh}^jF_{ir}^h
\right)
\dot{x}^r,
& (\Gamma ^\sigma )_{\dot{x}^kx^i}^{\dot{x}^j}
=-F_{ik}^j,\\
(\Gamma ^\sigma )_{t\dot{x}^i}^t=0,
& (\Gamma ^\sigma )_{x^j\dot{x}^i}^t
=0,
& (\Gamma ^\sigma )_{\dot{x}^j\dot{x}^i}^t
=0,\\
(\Gamma ^\sigma )_{t\dot{x}^i}^{x^k}
=0,
& (\Gamma ^\sigma )_{x^j\dot{x}^i}^{x^k}
=0,
& (\Gamma ^\sigma )_{\dot{x}^j\dot{x}^i}^{x^k}
=0,\\
(\Gamma ^\sigma )_{t\dot{x}^i}^{\dot{x}^j}
=0,
& (\Gamma ^\sigma )_{x^k\dot{x}^i}^{\dot{x}^j}
=-F_{ik}^j,
& (\Gamma ^\sigma )_{\dot{x}^k\dot{x}^i}^{\dot{x}^j}
=0,
\end{array}
\]
as follows from the formulas in \eqref{Tabla}
for this particular case.  By using these formulas
and the identification
$M^1\cong \mathbb{R}\times TM$, $j^1_{t_0}\gamma
\mapsto (t_0,\gamma _\ast (d/dt)_{t_0})$,
from \cite[II, formulas (7.8)]{YanoIshihara},
one realizes that the component of $\nabla ^\sigma $
in the tangent bundle coincides with the horizontal
lift $\tilde{\nabla }^H$ of $\tilde{\nabla }$.

Finally, the components of the torsion and curvature
tensor fields of $\nabla ^\sigma $ are expressed
in terms of the components of the curvature tensor
field of the connection $\tilde{\nabla }$ by means
of the following formulas:
$T_{ij}^k=\tilde{R}_{rji}^k\dot{x}^r$,
$P_j^h=\tilde{R}_{sjr}^h\dot{x}^r\dot{x}^s$,
$A_{kj}^h=\tilde{R}_{krj}^h\dot{x}^r$,
$B_{ijk}^h=\tilde{R}_{kij}^h$.
\end{remark}

Let $V$ be $\mathbb{R}^{2n+1}$ with basis
$(v_i)_{i=1}^{2n+1}$ and dual basis $(v^i)_{i=1}^{2n+1}$.
In \cite[3.4]{AM} a generalized Chern-Weil homomorphism
$\left(
S(\mathfrak{g}^\ast )\otimes \bigwedge V^\ast
\right) ^G
\to \Omega (M)$ has been defined for every $G$-structure
on $M$. In our case, $G\cong Gl(n,\mathbb{R})$,
$\mathfrak{g}\cong \mathfrak{gl}(n,\mathbb{R})$, and,
as calculation shows,
$\left( S
\left(
\mathfrak{g}^\ast
\right)
\otimes \bigwedge
\left(
V^\ast
\right)
\right) ^G
=S\left(
\mathfrak{g}^\ast
\right) ^G\oplus (S
\left(
\mathfrak{g}^\ast
\right) ^G
\otimes v^1)$. In fact, every
$t\in S^a(\mathfrak{g}^\ast )\otimes \bigwedge ^b(V^\ast )$
can be written as,
\begin{align*}
t & =\sum _{2\leq i_2<\ldots <i_b\leq 2n+1}s_{a,I}
\otimes v^1\wedge v^{i_2}\wedge \ldots \wedge v^{i_b}\\
& +\sum _{2\leq j_1<\ldots <j_b\leq 2n+1}s_{a,J}\otimes
v^{j_1}\wedge v^{j_2}\wedge \ldots \wedge v^{j_b},
\end{align*}
where $I=(i_2,\dotsc,i_b)\in \mathbb{N}^{b-1}$,
$J=(j_1,\dotsc,j_b)\in \mathbb{N}^b$,
$s_{a,I},s_{a,J}\in S^a(\mathfrak{g}^\ast )$.

If
$A\in G=\iota (Gl(n,\mathbb{R}))
\subset Gl(2n+1,\mathbb{R})$
is the matrix in Proposition \ref{Prop2}
corresponding to $\Lambda =\lambda I_n$,
$\lambda \in \mathbb{R}^\ast $,
i.e., $A\cdot v^1=v^1$, $A\cdot v^i=\lambda v^i$,
$2\leq i\leq 2n+1$, then the invariance equation
$A\cdot t-t=0$ is equivalent to saying,
\begin{align*}
0  & =\sum _{2\leq i_2<\ldots <i_b\leq 2n+1}
\left(
\lambda -\lambda ^b
\right)
s_{a,I}
\otimes v^1\wedge v^{i_2}\wedge \ldots \wedge v^{i_b}\\
& +\sum _{2\leq j_1<\ldots <j_b\leq 2n+1}
\left(
1-\lambda ^b
\right)
s_{a,J}
\otimes v^{j_1}\wedge v^{j_2}\wedge \ldots \wedge v^{j_b},
\end{align*}
and the result readily follows.

Accordingly, if $\theta $ is the soldering form
on $F(M^1)$, $\Omega ^\sigma $ is the curvature form
of the Chern connection attached to $\sigma $,
and $\theta ^\prime =\theta |_{P^\sigma }$,
$\Omega ^{\prime \sigma }=\Omega ^\sigma |_{P^\sigma }$
(cf.\ \cite[II, Proposition 6.1-(b)]{KN})
are their restrictions to $P^\sigma $ respectively,
then for every Weil polynomial
$f\in S(\mathfrak{g}^\ast )^G$ we obtain
$f\left(
\Omega ^{\prime \sigma }
\right)
\wedge \theta ^{\prime 1}
=d\left(
tf(\Omega ^{\prime \sigma })
\right) $.

In summary, the Chern connection attached
to an arbitrary SODE $\sigma $ on $M$
determines the Chern classes on $M$
in the standard Chern-Weil homomorphism
(under the natural isomorphism
$H^\bullet (M^1;\mathbb{R})
\cong H^\bullet (M;\mathbb{R})$),
whereas the characteristic forms of odd degree
attached to $P^\sigma $ are all exact.
This also shows that the sufficient
condition on the connection $\omega $
of being symmetric in \cite[3.4, Theorem]{AM}
is not necessary.

\section{Holonomy of $\nabla ^\sigma $}
\label{section_holonomy}

\subsection{General holonomy}

\begin{proposition}
\label{Holonomy}
Assume $M$ and $\sigma$ are of class $C^\omega $.
If the equation
\begin{multline*}
0=2R^\sigma
\left(
X,Y
\right)
(U)+R^\sigma
\left(
\left(
H^\sigma \circ \varepsilon ^{-1}
\right)
Z,T
\right)
(U) \\
+R^\sigma
\left(
\left(
H^\sigma \circ\varepsilon ^{-1}
\right)  T,Z
\right)
(U) ,
\end{multline*}
$X,Y\in T_\xi ^-M^1$, $Z,T,U\in T_\xi ^+M^1$,
implies $X=Y=Z=T=U=0$, then the holonomy algebra
of $\nabla ^\sigma $ is
$\mathfrak{gl}(n,\mathbb{R})$.
\end{proposition}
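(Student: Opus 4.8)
The plan is to exploit the reducibility of $\nabla^\sigma$ to $P^\sigma$ together with the analytic case of the Ambrose–Singer theorem, and then let the non-degeneracy hypothesis do the remaining linear algebra. By Proposition~\ref{Prop3}, $\nabla^\sigma$ is reducible to $P^\sigma$, so $\mathfrak{hol}_\xi(\nabla^\sigma)\subseteq\mathfrak g=\iota_\ast\mathfrak{gl}(n,\mathbb R)$; by Proposition~\ref{Prop2.2} an element of $\mathrm{ad}\,P^\sigma$ is determined by its restriction to $T^-_\xi M^1$, and under this identification the natural representation of $\mathfrak g$ becomes the standard representation of $\mathfrak{gl}(n,\mathbb R)$ on $\mathbb R^n$. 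Hence it is enough to prove that $\mathfrak{hol}_\xi(\nabla^\sigma)$, viewed inside $\mathrm{End}(T^-_\xi M^1)$, is the whole of $\mathrm{End}(T^-_\xi M^1)$. Since $M$ and $\sigma$ are of class $C^\omega$, I would invoke the classical fact that the holonomy algebra then coincides with the infinitesimal holonomy algebra at $\xi$, i.e.\ with the linear span of the values at $\xi$ of $R^\sigma$ and of all its iterated covariant derivatives $\nabla^\sigma\cdots\nabla^\sigma R^\sigma$. The statement thus reduces to a pointwise, purely linear claim about these tensors.

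\emph{First–order holonomy and conclusion.} The zeroth–order part of this span is the set of operators listed in \eqref{R_sigma}; recall that, by the Remark following Lemma~\ref{lemma}, the restrictions of $R^\sigma(X,Y)$ to $T^+$ and to $T^-$ correspond under $H^\sigma\circ\varepsilon^{-1}$ (equivalently, the endomorphism $E^\sigma$ of \eqref{E} is $\nabla^\sigma$–parallel, by item~(3) of the first characterization theorem). To reach the missing directions I would differentiate once: computing $(\nabla^\sigma_W R^\sigma)(\,\cdot\,,\cdot)$ for $W\in T^\pm_\xi M^1$ directly from \eqref{Tabla} and \eqref{R_sigma}, or organizing it through the second Bianchi identity for the connection with torsion $T^\sigma$ (using $\mathrm{Tor}\,\nabla^\sigma(X_i^\sigma,\partial/\partial\dot x^j)=0$ and $R^\sigma(\partial/\partial\dot x^h,\partial/\partial\dot x^k)=0$, read off from \eqref{Torsion} and \eqref{R_sigma}). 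I expect the cyclic combination that survives, evaluated on $U\in T^+_\xi M^1$, to be — up to a nonzero constant — precisely the three–term expression
\[
2R^\sigma(X,Y)(U)+R^\sigma\big((H^\sigma\circ\varepsilon^{-1})Z,T\big)(U)+R^\sigma\big((H^\sigma\circ\varepsilon^{-1})T,Z\big)(U),
\]
$X,Y\in T^-_\xi M^1$, $Z,T,U\in T^+_\xi M^1$, appearing in the statement, so that every such operator lies in $\mathfrak{hol}_\xi(\nabla^\sigma)$. If $\mathfrak{hol}_\xi(\nabla^\sigma)$ were a proper subspace of $\mathrm{End}(T^-_\xi M^1)$, choosing a nonzero linear form annihilating it and pairing it, through the identification $\mathrm{End}(T^-_\xi M^1)\cong\mathrm{End}(T^+_\xi M^1)$, with the operators just produced would yield a nontrivial tuple $(X,Y,Z,T,U)$ solving the displayed equation, contradicting the hypothesis. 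Therefore $\mathfrak{hol}_\xi(\nabla^\sigma)=\mathrm{End}(T^-_\xi M^1)\cong\mathfrak{gl}(n,\mathbb R)$.

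\emph{Main obstacle.} The delicate step is the identification in the previous paragraph: carrying out the Bianchi bookkeeping with the non-zero torsion $T^\sigma$ and the explicit components of \eqref{R_sigma} so that exactly the three terms $2R^\sigma(X,Y)+R^\sigma((H^\sigma\circ\varepsilon^{-1})Z,T)+R^\sigma((H^\sigma\circ\varepsilon^{-1})T,Z)$ reappear, with no leftover lower-order contributions, and then checking that the non-degeneracy hypothesis really matches the span so obtained rather than a smaller one (in particular that no further covariant derivatives are needed once that condition holds).
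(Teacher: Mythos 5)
Your overall framework (reducibility to $P^\sigma$ giving $\mathfrak{hol}\subseteq\mathfrak{g}\cong\mathfrak{gl}(n,\mathbb{R})$, analyticity giving the infinitesimal holonomy description, then linear algebra on the hypothesis) is the paper's framework, but the central step of your plan is misdirected and leaves a genuine gap. The three-term expression in the hypothesis is \emph{not} a first covariant derivative of the curvature and does not need to be manufactured from a Bianchi identity: since $(H^\sigma\circ\varepsilon^{-1})Z\in T^-_\xi(M^1)$ for $Z\in T^+_\xi(M^1)$, each of the terms $R^\sigma(X,Y)(U)$, $R^\sigma((H^\sigma\circ\varepsilon^{-1})Z,T)(U)$, $R^\sigma((H^\sigma\circ\varepsilon^{-1})T,Z)(U)$ is a plain value of $R^\sigma$ at $\xi$, i.e.\ a $k=0$ operator already lying in the (infinitesimal) holonomy algebra. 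Concretely, by \eqref{R_sigma} these are exactly the operators with matrices $(B_{ijk}^h)$, $i<j$, and $(R_{ijk}^h)$, $i\leq j$ (the latter symmetric in $i,j$ by \eqref{R's}, which is why the symmetrization in $Z,T$ appears). So there are no ``missing directions'' to reach by differentiating once; the only issue is whether these zeroth-order operators span $\mathfrak{gl}(n,\mathbb{R})$, and that is precisely what the hypothesis encodes. Your expectation that ``the cyclic combination that survives'' in the Bianchi identity reproduces the three-term expression is unproven and is not the mechanism; the paper never differentiates $R^\sigma$ at all.

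The concluding contradiction argument is also not tight. A nonzero linear form annihilating the span of a family of endomorphisms of $T^+_\xi(M^1)$ does not produce a tuple $(X,Y,Z,T,U)$ for which the \emph{vector} $2R^\sigma(X,Y)(U)+R^\sigma((H^\sigma\circ\varepsilon^{-1})Z,T)(U)+R^\sigma((H^\sigma\circ\varepsilon^{-1})T,Z)(U)$ vanishes, so it does not contradict the hypothesis as stated. What is needed (and what the paper does) is to package the hypothesis as the statement that the map $\Upsilon^\sigma\colon\bigwedge^2T^-(M^1)\oplus S^2T^+(M^1)\to\mathrm{End}\,T^+(M^1)$, $\Upsilon^\sigma(X\wedge Y,Z\odot T)(U)=R^\sigma(X,Y)(U)+\tfrac12\{R^\sigma((H^\sigma\circ\varepsilon^{-1})Z,T)(U)+R^\sigma((H^\sigma\circ\varepsilon^{-1})T,Z)(U)\}$, is an isomorphism between spaces of equal dimension $\binom{n}{2}+\binom{n+1}{2}=n^2$; its matrix in the natural bases is exactly $\bigl((B_{ijb}^a),(R_{hkb}^a)\bigr)$, so surjectivity says the curvature operators restricted to $T^+(M^1)$ span $\mathrm{End}\,T^+(M^1)\cong\mathfrak{gl}(n,\mathbb{R})$, and the conclusion follows with no derivatives of curvature and no duality detour. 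As written, your proof neither establishes the identity it relies on nor correctly converts the hypothesis into the spanning statement, so the key step remains unjustified.
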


\begin{proof}
According to the hypothesis in the statement,
the holonomy algebra can be computed
from the infinitesimal holonomy algebra
(see \cite[II, Theorem10.8]{KN}). Moreover,
from \cite[III, Theorem 9.2]{KN} we know
that such algebra is spanned by the endomorphisms
\[
\left(
\left(
\nabla ^\sigma
\right) ^k
R^\sigma
\right)
\left(
X,Y;V_1;\ldots;V_k
\right) ,
\quad
\forall X,Y,V_1,\dotsc,V_k
\in T_\xi M^1,
\;\forall k\in\mathbb{N}.
\]
For $k=0$, from the formulas
\eqref{R_sigma} we obtain
\begin{align}
R^\sigma (X_i^\sigma ,X_j^\sigma )
& =B_{ijk}^h
\left(
\omega ^k\otimes X_h^\sigma
+\varpi ^k
\otimes \dfrac{\partial }{\partial \dot{x}^h}
\right) ,
\quad
i<j,
\label{Curvature_B}\\
R^\sigma
\left(
X_i^\sigma ,\dfrac{\partial }{\partial \dot{x}^j}
\right)
& =R_{ijk}^h
\left(
\omega ^k\otimes X_h^\sigma +\varpi ^k
\otimes \dfrac{\partial }{\partial \dot{x}^h}
\right) ,
\quad
i\leq j.\label{Curvature_R}
\end{align}
Hence
\begin{align*}
\left.
R^\sigma
\left(
X_i^\sigma ,X_j^\sigma
\right)
\right\vert _{T^+(M^1)}
& =B_{ijk}^h\varpi ^k\otimes
\frac{\partial }{\partial \dot{x}^h},
\quad
i<j,\\
\left.
R^\sigma
\left(
X_i^\sigma ,
\dfrac{\partial }{\partial \dot{x}^j}
\right)
\right\vert _{T^+(M^1)}
& =R_{ijk}^h\varpi^k\otimes
\frac{\partial }{\partial \dot{x}^h}
\quad
i\leq j.
\end{align*}

If the matrices $
\left(
B_{ijk}^h(\xi)
\right) _{h,k=1}^n$, $i<j$,
$\left(
R_{ijk}^h(\xi )
\right) _{h,k=1}^n$, $i\leq j$ span
$\mathfrak{gl}(n,\mathbb{R})$,
we can conclude. Moreover, if
\[
\Upsilon ^\sigma \colon \bigwedge ^2T^-(M^1)
\oplus S^2T^+(M^1)\to \mathrm{End}T^+(M^1)
\]
is the homomorphism given by,
\begin{multline*}
\Upsilon ^\sigma
\left(  X\wedge Y,Z\odot T
\right)
(U)
=R^\sigma
\left(
X,Y
\right)
(U) \\
+\tfrac{1}{2}
\left\{
R^\sigma
\left(
\left(
H^\sigma \circ \varepsilon ^{-1}
\right)
Z,T
\right)
(U)+R^\sigma
\left(
\left(
H^\sigma \circ \varepsilon ^{-1}
\right)
T,Z
\right)
(U)
\right\}  ,
\end{multline*}
then the condition in the statement
is readily seen to be equivalent
to saying $\Upsilon ^\sigma $
is an isomorphism, as its matrix
on the bases
\[
\left(
X_i^\sigma \wedge X_j^\sigma ,
\dfrac{\partial }{\partial \dot{x}^h}
\odot
\dfrac{\partial }{\partial \dot{x}^k}
\right) ,
\;i<j,h\leq k;
\quad
\left(
\varpi ^a\otimes
\frac{\partial }{\partial \dot{x}^b}
\right) ,
\;a,b=1,\dotsc,n,
\]
of $\bigwedge ^2T^-(M^1)\oplus S^2T^+(M^1)$,
$\mathrm{End}T^+(M^1)=T^+(M^1)^\ast \otimes T^+(M^1)$,
respectively, is
$\left(
\left(
B_{ijb}^a
\right) _{a,b=1}^n,
\left(
R_{hkb}^a
\right) _{a,b=1}^n
\right) $.
\end{proof}

\subsection{Special holonomy}

Next, we determine the conditions under which
the holonomy algebra of $\nabla ^\sigma $
is contained in $\mathfrak{sl}(n,\mathbb{R})$.
If $M$ and $\sigma $ still are of class
$C^\omega $, then according to
(\cite[Lemma 1, p.\ 152]{KN}), the holonomy
algebra is spanned by the endomorphisms
\begin{equation}
\label{endomorphisms}
\nabla _{V_{l}}^\sigma \cdots \nabla _{V_1}^\sigma
\left(
R^\sigma
\left(
X,Y
\right)
\right) ,
\quad
\forall X,Y,V_1,\dotsc,V_l\in \mathfrak{X}(M^1),
\;\forall l\in\mathbb{N}.
\end{equation}

\begin{lemma}
\label{lemma_S}For every system of vector fields
$X,Y,V_1,\dotsc,V_l\in \mathfrak{X}(M^1)$,
$l\in\mathbb{N}$, the endomorphism
$\nabla _{V_l}^\sigma \cdots \nabla _{V_1}^\sigma
\left(
R^\sigma
\left(
X,Y
\right)
\right) $
can locally be written as follows:
\[
S_k^h\left(
\omega ^k\otimes X_h^\sigma
+\varpi ^k
\otimes \dfrac{\partial }{\partial \dot{x}^h}
\right) ,
\quad
S_k^h\in C^\infty (M^1).
\]
\end{lemma}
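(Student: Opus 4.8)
The plan is to argue by induction on $l$. For the base case $l=0$, the formulas \eqref{R_sigma} of Lemma \ref{lemma} already display $R^\sigma(X,Y)$, for any pair $X,Y\in\mathfrak{X}(M^1)$, as a linear combination of the two building blocks $\omega^k\otimes X_h^\sigma$ and $\varpi^k\otimes\partial/\partial\dot{x}^h$ with a \emph{common} coefficient; indeed \eqref{R_sigma} shows that $R^\sigma(X,Y)$ maps $X_k^\sigma\mapsto S_k^hX_h^\sigma$ and $\partial/\partial\dot{x}^k\mapsto S_k^h\partial/\partial\dot{x}^h$ with the same $S_k^h$ (the components $A_{ij}^h$, $B_{ijk}^h$, $R_{ijk}^h$ of \eqref{A's}--\eqref{R's} being paired this way), and annihilates $X^\sigma$. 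Expanding $X,Y$ in the frame \eqref{reference} and using $C^\infty(M^1)$-bilinearity of the curvature gives the claimed form with $S_k^h\in C^\infty(M^1)$.

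For the inductive step, assume $\Phi:=\nabla_{V_{l-1}}^\sigma\cdots\nabla_{V_1}^\sigma(R^\sigma(X,Y))$ has the stated form $\Phi=S_k^h(\omega^k\otimes X_h^\sigma+\varpi^k\otimes\partial/\partial\dot{x}^h)$; I must show $\nabla_{V_l}^\sigma\Phi$ is again of this form. Since the coframe $(dt,\omega^i,\varpi^i)$ is dual to the frame \eqref{reference}, it suffices to differentiate each elementary tensor. Writing $\nabla^\sigma_W(\omega^k\otimes X_h^\sigma)=(\nabla^\sigma_W\omega^k)\otimes X_h^\sigma+\omega^k\otimes(\nabla^\sigma_W X_h^\sigma)$ and likewise for the $\varpi$-block, and then reading off from Table \eqref{Tabla} (equivalently from \eqref{cc4}, \eqref{c6}) that $\nabla^\sigma_W X_h^\sigma$ is a $C^\infty(M^1)$-combination of the $X_k^\sigma$ alone while $\nabla^\sigma_W(\partial/\partial\dot{x}^h)$ is the \emph{same} combination of the $\partial/\partial\dot{x}^k$ — because $\nabla^\sigma$ preserves $T^-(M^1)$ and $T^+(M^1)$ and is conjugate on the two via $H^\sigma\circ\varepsilon^{-1}$, cf.\ Proposition \ref{Prop2.2} — one checks that the two blocks $\omega^k\otimes X_h^\sigma$ and $\varpi^k\otimes\partial/\partial\dot{x}^h$ are each carried to combinations (with identical coefficients) of the same two blocks. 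Dually, $\nabla^\sigma_W\omega^k$ is a combination of the $\omega^j$ and $\nabla^\sigma_W\varpi^k$ the \emph{same} combination of the $\varpi^j$; there is no $dt$ component since $\nabla^\sigma X^\sigma=0$ forces $\nabla^\sigma(dt)=0$. Collecting terms, $\nabla^\sigma_{V_l}\Phi$ is again of the form $\tilde S_k^h(\omega^k\otimes X_h^\sigma+\varpi^k\otimes\partial/\partial\dot{x}^h)$ with new coefficients $\tilde S_k^h\in C^\infty(M^1)$, completing the induction.

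The only point requiring care — and the step I expect to be the main obstacle — is verifying that the covariant derivative genuinely respects the ``diagonal'' pairing between the $T^-(M^1)$-block and the $T^+(M^1)$-block at every stage, i.e.\ that no cross terms of the shape $\omega^k\otimes\partial/\partial\dot{x}^h$ or $\varpi^k\otimes X_h^\sigma$ (nor any $dt$-components) are produced. This is exactly the statement that $\nabla^\sigma$ reduces to the $G$-structure $P^\sigma$, so it follows from Proposition \ref{Prop3} together with the explicit description of $\mathrm{ad}P^\sigma$ in Proposition \ref{Prop2.2}; concretely it is a line-by-line inspection of Table \eqref{Tabla}. Once that structural fact is in hand the rest is a routine Leibniz-rule bookkeeping.
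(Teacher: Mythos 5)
Your proof is correct and follows essentially the same route as the paper's: the base case $l=0$ is read off from the formulas \eqref{R_sigma} (equal coefficient matrices on the $T^-$ and $T^+$ blocks, all other components zero), and the inductive step is the same covariant-derivative computation in the frame \eqref{reference}, which you package as Leibniz-rule bookkeeping using that Table \eqref{Tabla} shows $\nabla^\sigma$ preserves $T^-(M^1)$ and $T^+(M^1)$ with identical coefficients and kills $X^\sigma$ (hence no $dt$-components), exactly the structural facts the paper exploits when writing the explicit coefficients $S^h_{0,k}$, $S^h_{j,k}$, $\partial S^h_k/\partial\dot{x}^j$.
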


\begin{proof}
For $l=0$ from the formulas \eqref{R_sigma}
we obtain
\[
R^\sigma (X^\sigma ,X_i^\sigma )=A_{ki}^h
\left(
\omega ^k\otimes X_h^\sigma
+\varpi ^k
\otimes \dfrac{\partial }{\partial \dot{x}^h}
\right) ,
\]
which, together with the formulas
\eqref{Curvature_B} and \eqref{Curvature_R},
prove the statement in this case.
For $l\geq1$ the proof is by induction. If
\[
\left(
\nabla ^\sigma
\right) _{V_{l-1}}^\sigma
\cdots \nabla _{V_1}^\sigma
\left(
R^\sigma (X,Y)
\right)
=S_k^h
\left(
\omega ^k\otimes X_h^\sigma
+\varpi ^k\otimes
\dfrac{\partial }{\partial \dot{x}^h}
\right) ,
\]
then,
\begin{align*}
\nabla ^\sigma _{X^\sigma }
\left(
\nabla _{V_{l-1}}^\sigma
\cdots \nabla _{V_1}^\sigma
\left(
R^\sigma (X,Y)
\right)
\right)
& =S_{0,k}^h
\left(
\omega ^k\otimes X_h^\sigma
+\varpi ^k
\otimes \dfrac{\partial }{\partial \dot{x}^h}
\right) ,\\
\nabla ^\sigma _{X_j^\sigma }
\left(
\nabla _{V_{l-1}}^\sigma
\cdots \nabla _{V_1}^\sigma
\left(
R^\sigma (X,Y)
\right)
\right)
& =S_{j,k}^h
\left(
\omega ^k\otimes X_h^\sigma
+\varpi^k
\otimes \dfrac{\partial }{\partial \dot{x}^h}
\right) ,\\
\nabla _{\frac{\partial }{\partial \dot{x}^j}}^\sigma
\left(
\nabla _{V_{l-1}}^{\sigma
}\cdots\nabla _{V_1}^\sigma
\left(
R^\sigma (X,Y)
\right)
\right)
& =\frac{\partial S_k^h}{\partial \dot{x}^j}
\left(
\omega ^k\otimes X_h^\sigma
+\varpi ^k\otimes
\dfrac{\partial }{\partial \dot{x}^h}
\right) ,
\end{align*}
where
\begin{align*}
S_{0,k}^h & =X^\sigma
\left(
S_k^h
\right)
+\tfrac{1}{2}S_r^h
\dfrac{\partial F^r}{\partial \dot{x}^k}
-\tfrac{1}{2}S_k^r
\dfrac{\partial F^h}{\partial \dot{x}^r},\\
S_{j,k}^h
& =X_j^\sigma
\left(
S_k^h
\right)
+\tfrac{1}{2}S_r^h
\dfrac{\partial^2F^r}
{\partial \dot{x}^j\partial \dot{x}^k}
-\tfrac{1}{2}S_k^r
\dfrac{\partial^2F^h}
{\partial \dot{x}^j\partial \dot{x}^r}.
\end{align*}

\end{proof}

By induction on $l$ the endomorphisms
\eqref{endomorphisms} are proved to be
traceless if and only if there exists
a function $F\in C^\infty (M^0)$ such that,
\begin{equation}
\label{holonomia_special}
\sum _{h=1}^n
\frac{\partial F^h}{\partial \dot{x}^h}
=\frac{\partial F}{\partial t}
+\frac{\partial F}{\partial x^i}\dot{x}^i.
\end{equation}
For $l=0$ taking the formula \eqref{R's}
into account, we obtain
\[
\operatorname*{tr}
\left(  R_{ijk}^h
\right) _{h,k=1}^n
=\dfrac{\partial ^2}
{\partial \dot{x}^i\partial \dot{x}^j}
\left(
\sum _{h=1}^n\frac{\partial F^h}
{\partial \dot{x}^h}
\right) ,
\quad
1\leq i\leq j\leq n.
\]
Hence the matrices $(R_{ijk}^h)_{h,k=1}^n$
are traceless, if and only if,
\[
\sum _{h=1}^n\partial F^h/\partial \dot{x}^h
=F_0+F_i\dot{x}^i,
\quad
F_0,F_i\in C^\infty (M^0).
\]
Furthermore, taking the formula above
and the identity
\begin{align*}
2B_{ijk}^h
& =-\dfrac{\partial ^3F^h}
{\partial x^i\partial \dot{x}^j\partial \dot{x}^k}
-\tfrac{1}{2}
\frac{\partial F^l}{\partial \dot{x}^i}
\dfrac{\partial ^3F^h}
{\partial \dot{x}^j\partial \dot{x}^k\partial \dot{x}^l}
+\dfrac{\partial^3F^h}
{\partial x^j\partial \dot{x}^i\partial \dot{x}^k}\\
& +\tfrac{1}{2}
\frac{\partial F^{l}}{\partial \dot{x}^j}
\dfrac{\partial ^3F^h}
{\partial \dot{x}^i\partial \dot{x}^k\partial \dot{x}^l}
+\tfrac{1}{2}
\dfrac{\partial ^2F^r}
{\partial \dot{x}^j\partial \dot{x}^k}
\dfrac{\partial ^2F^h}
{\partial \dot{x}^i\partial \dot{x}^r}
-\tfrac{1}{2}\dfrac{\partial ^2F^r}
{\partial \dot{x}^i\partial \dot{x}^k}
\dfrac{\partial^2F^h}
{\partial \dot{x}^j\partial \dot{x}^r}\nonumber
\end{align*}
into account, we conclude that the matrices
$(B_{ijk}^h)_{h,k=1}^n$ are traceless if and only if,
$\partial F_j/\partial x^i=\partial F_i/\partial x^j$
for $1\leq i<j\leq n$. Finally, from the identity
\begin{align*}
2A_{kj}^h & =-\dfrac{\partial ^3F^h}
{\partial t\partial \dot{x}^k\partial \dot{x}^j}
-\dot{x}^r
\dfrac{\partial ^3F^h}
{\partial x^r\partial \dot{x}^k\partial \dot{x}^j}
-F^r\dfrac{\partial ^3F^h}
{\partial \dot{x}^r\partial \dot{x}^k\partial \dot{x}^j}
+\dfrac{\partial ^2F^h}{\partial x^j\partial \dot{x}^k}\\
& +\tfrac{1}{2}\dfrac{\partial F^h}
{\partial \dot{x}^r}\dfrac{\partial ^2F^r}
{\partial \dot{x}^k\partial \dot{x}^j}
-\tfrac{1}{2}\dfrac{\partial F^r}
{\partial \dot{x}^k}\dfrac{\partial^2F^h}
{\partial \dot{x}^r\partial \dot{x}^j},\nonumber
\end{align*}
we deduce that the matrices $(A_{kj}^h)_{h,k=1}^n$
are traceless for $1\leq j\leq n$ if and only if,
$\partial F_j/\partial t=\partial F_0/\partial x^j$.

For $l\geq 1$, by applying Lemma \ref{lemma_S}
and the induction hypothesis, we obtain
\[
\nabla _{V_{l-1}}^\sigma \cdots \nabla _{V_1}^\sigma
\left(
R^\sigma (X,Y)
\right)
=S_k^h\left( \omega ^k\otimes X_h^\sigma
+\varpi ^k\otimes
\dfrac{\partial }{\partial \dot{x}^h}
\right) ,
\;S_h^h=0.
\]
Hence, taking the formulas for $S_{0,k}^h$ and
$S_{j,k}^h$ at the end of the proof of Lemma
\ref{lemma_S} into account we obtain
\[
\operatorname*{tr}
\left(
S_{0,k}^h
\right) _{h,k=1}^n
=X^\sigma
\left(
\operatorname*{tr}
\left(
S_k^h
\right) _{h,k=1}^n
\right)
+\tfrac{1}{2}
\left(
S_r^h
\dfrac{\partial F^r}{\partial \dot{x}^h}
-S_h^r
\dfrac{\partial F^h}{\partial \dot{x}^r}
\right)
=0,
\]
\[
\operatorname*{tr}
\left(
S_{j,k}^h
\right) _{h,k=1}^n
=X_j^{\sigma
}\left(
\operatorname*{tr}
\left(
S_k^h
\right) _{h,k=1}^n
\right)
+\tfrac{1}{2}
\left(
S_r^h
\dfrac{\partial^2F^r}
{\partial \dot{x}^j\partial \dot{x}^h}
-S_h^r
\dfrac{\partial^2F^h}
{\partial \dot{x}^j\partial \dot{x}^r}
\right)
=0,
\]
\[
\operatorname*{tr}
\left(
\frac{\partial S_k^h}{\partial \dot{x}^j}
\right) _{h,k=1}^n
=\frac{\partial }{\partial \dot{x}^j}
\left(
\operatorname*{tr}
\left(
S_k^h
\right) _{h,k=1}^n
\right)
=0.
\]
As a calculation shows, the equation
\eqref{holonomia_special} means
that the volume form
\[
\exp (-F)dt\wedge \omega ^1\wedge \ldots \wedge
\omega ^n\wedge \varpi ^1\wedge \ldots\wedge
\varpi ^n
\]
is parallel with respect to $\nabla ^\sigma $.

\subsection{Orthogonal holonomy}

\begin{proposition}
\label{matrixU}
The holonomy group of $\nabla ^\sigma $ is contained
in $SO(n)$ if and only if for every $\xi\in M^1$ there
exist a coordinate neighbourhood $(N^0;t,x^i)$
of $p^{10}(\xi)$ in $M^0$ and a positive definite
symmetric matrix
$U=(u_j^i)_{i,j=1}^{n}$, $u_j^i\in C^\infty (N^0)$,
such that the following equation holds:
\begin{equation}
\label{PDE}
\frac{\partial U}{\partial t}
+\dot{x}^i\frac{\partial U}{\partial x^i}
+UW+(UW)^t=0,
\end{equation}
where
$W=(w_j^i)$,
$w_j^i
=\tfrac{1}{2}
\dfrac{\partial F^i}{\partial \dot{x}^j}$.
\end{proposition}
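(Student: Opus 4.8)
The plan is to reduce the statement to the existence of a $\nabla ^\sigma $-parallel Euclidean metric on the vertical bundle and then to read off that metric's coordinate form from Table \eqref{Tabla}. Recall that $\nabla ^\sigma $ is reducible to $P^\sigma $ (Proposition \ref{Prop3}), whose structure group $G=\iota (Gl(n,\mathbb{R}))$ acts on the tangent spaces of $M^1$ fixing $T^0(M^1)=\langle X^\sigma \rangle $ and acting by one and the same $\Lambda \in Gl(n,\mathbb{R})$ on $T^-(M^1)$ and on $T^+(M^1)$. Hence the holonomy group $\mathrm{Hol}(\nabla ^\sigma )\subseteq G$ lies in $\iota (SO(n))$ if and only if there is a holonomy-invariant positive-definite inner product on the standard fibre $\mathbb{R}^n$, i.e.\ (holonomy reduction theorem, \cite[II, \S 7]{KN}, together with the correspondence between parallel tensor fields and reductions of the structure group) if and only if $\nabla ^\sigma $ admits a parallel positive-definite metric $g$ on $T^+(M^1)=V(p^{10})$ --- equivalently on $T^-(M^1)$, since the isomorphism $E^\sigma $ of the first characterization theorem of Section \ref{section_characterizations}, which interchanges $T^-(M^1)$ and $T^+(M^1)$ by \eqref{E} and is $\nabla ^\sigma $-parallel by its condition (3), carries one such metric to the other. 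The usual distinction between $O(n)$ and $SO(n)$ is immaterial for the restricted holonomy group, which is connected; I would phrase the argument accordingly.

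First I would write such a $g$ in the local frame \eqref{reference}: over $(p^{10})^{-1}(N^0)$ put $U=(u_{ij})$ with $u_{ij}=g(\partial /\partial \dot{x}^i,\partial /\partial \dot{x}^j)$, a symmetric positive-definite matrix-valued function, and impose $\nabla ^\sigma g=0$ using the Christoffel data of $\nabla ^\sigma $ in this frame listed in Table \eqref{Tabla}. Because $\nabla ^\sigma _{\partial /\partial \dot{x}^k}(\partial /\partial \dot{x}^i)=0$, the $\partial /\partial \dot{x}^k$-component of $\nabla ^\sigma g$ is simply $\partial u_{ij}/\partial \dot{x}^k$, so parallelism forces $U$ to be the pull-back of a matrix on $M^0$, i.e.\ $u_{ij}\in C^\infty (N^0)$. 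Next, using $\nabla ^\sigma _{X^\sigma }(\partial /\partial \dot{x}^i)=-w_i^j\,\partial /\partial \dot{x}^j$ and the fact that, on $\dot{x}$-independent functions, $X^\sigma =\partial /\partial t+\dot{x}^i\,\partial /\partial x^i$, the $X^\sigma $-component of $\nabla ^\sigma g=0$ becomes precisely \eqref{PDE}, the two terms coming from $\nabla ^\sigma $ combining into $UW+(UW)^t$ thanks to the symmetry of $U$. Finally, $\nabla ^\sigma _{X_j^\sigma }(\partial /\partial \dot{x}^i)=-(\partial w_i^k/\partial \dot{x}^j)\,\partial /\partial \dot{x}^k$ together with $X_j^\sigma =\partial /\partial x^j+w_j^k\,\partial /\partial \dot{x}^k$ shows that the $X_j^\sigma $-component of $\nabla ^\sigma g=0$ is exactly $\partial /\partial \dot{x}^j$ applied to \eqref{PDE}, hence no independent condition. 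Thus, over $(p^{10})^{-1}(N^0)$, $\nabla ^\sigma g=0$ is equivalent to: $U$ is $\dot{x}$-independent and satisfies \eqref{PDE}.

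With this computation the equivalence follows both ways. If $\mathrm{Hol}(\nabla ^\sigma )\subseteq SO(n)$, the global parallel metric $g$ exists and its matrix in any induced chart yields a $U$ as required. Conversely, if near every $\xi $ there is a coordinate domain $N^0\ni p^{10}(\xi )$ and a $U$ as in the statement, then $U$ defines a $\nabla ^\sigma $-parallel positive-definite metric on $T^+(M^1)$ over $(p^{10})^{-1}(N^0)$; since the change-of-chart law for $U$ is exactly the tensorial one for a metric on $V(p^{10})$ and a parallel tensor field is determined by parallel transport from one point, these local metrics glue to a global $\nabla ^\sigma $-parallel positive-definite metric on $T^+(M^1)$, whence $\mathrm{Hol}(\nabla ^\sigma )\subseteq SO(n)$ by the reduction theorem. (If one prefers to bypass the glueing one may instead invoke, as in the previous two subsections, the $C^\omega $ hypothesis: a local parallel metric at $\xi $ forces $R^\sigma $ and all its covariant derivatives at $\xi $ to be skew for $g_\xi $, so by \cite[II, Theorem 10.8]{KN}, \cite[III, Theorem 9.2]{KN} the holonomy algebra at $\xi $ lies in $\mathfrak{so}(n)$.)

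I expect the main difficulty to be bookkeeping rather than conceptual: one must track index positions carefully when reading the connection matrix $-W$ (and its $\dot{x}^j$-derivative) off Table \eqref{Tabla} and matching it with the correct side of $U$ in $\nabla ^\sigma g=0$, and one must verify that the $X_j^\sigma $-equation is genuinely a consequence of \eqref{PDE} rather than an extra constraint --- which works precisely because $U$ has already been shown to descend to $M^0$. The only conceptual point needing a word is the $O(n)$-versus-$SO(n)$ normalisation, handled by connectedness of the restricted holonomy group as indicated above.
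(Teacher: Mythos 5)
Your proposal is correct and essentially reproduces the paper's own proof: the paper likewise characterizes the condition by a $\nabla ^\sigma $-parallel metric and then derives \eqref{PDE} from the frame formulas in Table \eqref{Tabla}, obtaining exactly your three groups of equations ($\partial U/\partial \dot{x}^k=0$, the $X^\sigma $-equation, and the $X_k^\sigma $-equation, the last recovered by differentiating \eqref{PDE} with respect to $\dot{x}^k$); your fibre metric $u_{ij}=g(\partial /\partial \dot{x}^i,\partial /\partial \dot{x}^j)$ on $T^+(M^1)$ is precisely the paper's matrix $U=(\Lambda ^t)^{-1}\Lambda ^{-1}$, written without the conjugating frame $\Lambda $. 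The two points you flag (the $O(n)$ versus $SO(n)$ normalisation, and the passage from local solutions $U$ to a globally parallel metric in the converse direction) are treated no more carefully in the paper's proof, so your argument is at the same level of rigour as the original.
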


\begin{proof}
The holonomy group of $\nabla ^\sigma $
is contained in $SO(n)$ if
and only if there exists a Riemannian
metric $g^1$ on $M^1$ such that,

\begin{enumerate}
\item[(i)]
$g^1$ is parallel with respect
to $\nabla ^\sigma $.

\item[(ii)]
For every $\xi\in M^1$ there exist
an open neighbourhood $N^1$ and a section
of $P^\sigma $ defined over $N^1$,
$(X^\sigma ,\bar{X}_j^\sigma
=\Lambda _j^iX_i^\sigma ,
\bar{X}_j
=\Lambda_j^i\partial /\partial \dot{x}^i)_{j=1}^n
$,
$\Lambda _j^i\in C^\infty (N^1)$,
which is an orthonormal linear frame
with respect to $g^1$;
e.g., see \cite[II, \S 7, Lemma 2; III,
Proposition 1.5; IV, Proposition 2.1]{KN}.
\end{enumerate}

We impose the condition $\nabla ^\sigma g^1=0$,
by using the basis
$(X^\sigma ,X_i^\sigma ,
\partial /\partial \dot{x}^i)_{i=1}^n$.
From the identities
{\small
\begin{equation}
\begin{array}
[c]{l}
g^1\left(
X^\sigma ,X_k^\sigma
\right)
=g^1\left(
X^\sigma ,
\left(
\Lambda ^{-1}
\right) _k^a
\Lambda _a^rX_r^\sigma
\right)
=\left(
\Lambda ^{-1}
\right) _k^a g^1
\left(
X^\sigma ,\bar{X}_a^\sigma
\right)
=0,\\
g^1\left(
X^\sigma ,\partial/\partial \dot{x}^k
\right)
=\left(
\Lambda ^{-1}
\right) _k^ag^1
\left(
X^\sigma ,
\Lambda_a^r\partial/\partial \dot{x}^r
\right)
=\left(
\Lambda^{-1}
\right) _k^ag^1
\left(
X^\sigma ,\bar{X}_a
\right)
=0,\\
g^1\left(
X_h^\sigma ,X_k^\sigma
\right)
=\left(
\Lambda^{-1}
\right) _h^bg^1
\left(
\bar{X}_b^\sigma ,\bar{X}_a^\sigma
\right)
\left(
\Lambda^{-1}
\right)  _k^a=
\left(
\Lambda^{-1}
\right)  _h^b
\delta _{ba}
\left(
\Lambda^{-1}
\right)  _k^a,\\
g^1\left(
X_h^\sigma ,\partial /\partial \dot{x}^k
\right)
=\left(
\Lambda^{-1}
\right) _b^hg^1
\left(
\bar{X}_b^\sigma ,\bar{X}_a
\right)
\left(
\Lambda ^{-1}
\right) _k^a
=0,\\
g^1\left(
\partial /\partial \dot{x}^h,
\partial /\partial \dot{x}^k
\right)
=\left(
\Lambda ^{-1}
\right) _b^hg^1
\left(  \bar{X}_b,\bar{X}_a
\right)
\left(
\Lambda^{-1}
\right) _k^a
=\left(
\Lambda ^{-1}
\right) _h^b\delta _{ba}
\left(
\Lambda^{-1}
\right) _k^a,
\end{array}
\label{g}
\end{equation}
}
and the formulas in \eqref{Tabla},
we conclude the following equations
hold identically:
\begin{align*}
X^\sigma
\left(
g^1(X^\sigma ,X^\sigma )
\right)
& =g^1\left(
\nabla _{X^\sigma }^\sigma X^\sigma ,X^\sigma
\right)
+g^1\left(
X^\sigma ,\nabla _{X^\sigma }^\sigma X^\sigma
\right) ,\\
X^\sigma
\left(
g^1(X^\sigma ,X_i^\sigma )
\right)
& =g^1
\left(
\nabla _{X^\sigma }^\sigma X^\sigma ,X_i^\sigma
\right)
+g^1\left(
X^\sigma ,
\nabla _{X^\sigma }^\sigma X_i^\sigma
\right) ,\\
X^\sigma
\left(
g^1\left(
X^\sigma ,
\partial /\partial \dot{x}^j
\right)
\right)
& =g^1
\left(
\nabla _{X^\sigma }^\sigma X^\sigma ,
\partial /\partial \dot{x}^j
\right)
+g^1\left(
X^\sigma ,\nabla _{X^\sigma }^\sigma
\partial/\partial \dot{x}^j
\right) ,\\
X^\sigma
\left(
g^1\left(
X_i^\sigma ,\partial /\partial \dot{x}^j
\right)
\right)
& =g^1
\left(
\nabla _{X^\sigma }^\sigma X_i^\sigma ,
\partial /\partial \dot{x}^j
\right)
+g^1\left(
X_i^\sigma ,\nabla _{X^\sigma }^\sigma
\partial /\partial \dot{x}^j
\right) ,
\end{align*}
\begin{align*}
X_k^\sigma
\left(
g^1(X^\sigma ,X^\sigma )
\right)
& =g^1
\left(
\nabla _{X_k^\sigma }^\sigma X^\sigma ,X^\sigma
\right)
+g^1\left(
X^\sigma ,\nabla _{X_k^\sigma }^\sigma X^\sigma
\right) ,\\
X_k^\sigma
\left(
g^1(X^\sigma ,X_i^\sigma )
\right)
& =g^1
\left(
\nabla _{X_k^\sigma }^\sigma X^\sigma ,X_i^\sigma
\right)
+g^1\left(
X^\sigma ,\nabla _{X_k^\sigma }^\sigma X_i^\sigma
\right)  ,\\
X_k^\sigma
\left(
g^1\left(
X^\sigma ,\partial/\partial \dot{x}^j
\right)
\right)
& =g^1
\left(  \nabla _{X_k^\sigma }^\sigma X^\sigma ,
\partial /\partial \dot{x}^j
\right)
+g^1\left(
X^\sigma ,\nabla _{X_k^\sigma }^\sigma
\partial /\partial \dot{x}^j
\right) ,\\
X_k^\sigma
\left(
g^1\left(
X_i^\sigma ,\partial/\partial \dot{x}^j
\right)
\right)
& =g^1
\left(
\nabla _{X_k^\sigma }^\sigma X_i^\sigma ,
\partial /\partial \dot{x}^j
\right)
+g^1\left(
X_i^\sigma ,\nabla _{X_k^\sigma }^\sigma
\partial /\partial \dot{x}^j
\right) ,
\end{align*}
\begin{align*}
\frac{\partial }{\partial \dot{x}^k}
\left(
g^1(X^\sigma ,X^\sigma )
\right)
& =g^1
\left(
\nabla _{\partial/\partial \dot{x}^k}^\sigma
X^\sigma ,X^\sigma
\right)
+g^1\left(
X^\sigma ,
\nabla _{\partial /\partial \dot{x}^k}^\sigma
X^\sigma
\right) ,\\
\frac{\partial }{\partial \dot{x}^k}
\left(
g^1(X^\sigma ,X_i^\sigma )
\right)
& =g^1
\left(
\nabla _{\partial /\partial \dot{x}^k}^\sigma
X^\sigma ,X_i^\sigma
\right)
+g^1\left(
X^\sigma ,\nabla _{\partial
/\partial \dot{x}^k}^\sigma X_i^\sigma
\right)  ,\\
\frac{\partial }{\partial \dot{x}^k}
\left(
g^1\left(
X^\sigma ,\partial /\partial \dot{x}^j
\right)
\right)
& =g^1\left(
\nabla _{\partial /\partial \dot{x}^k}^\sigma
X^\sigma ,\partial/\partial \dot{x}^j
\right)
+g^1\left(
X^\sigma ,
\nabla _{\partial /\partial \dot{x}^k}^\sigma
\partial/\partial \dot{x}^j
\right)  ,\\
\frac{\partial }{\partial \dot{x}^k}
\left(
g^1\left(
X_i^\sigma ,\partial /\partial \dot{x}^j
\right)
\right)
& =g^1
\left(
\nabla _{\partial /\partial \dot{x}^k}^\sigma
X_i^\sigma ,\partial /\partial \dot{x}^j
\right)
+g^1\left(
X_i^\sigma ,
\nabla _{\partial /\partial \dot{x}^k}^\sigma
\partial /\partial \dot{x}^j
\right) ,
\end{align*}
and the rest of conditions
for $\nabla ^\sigma g^1=0$ leads
us to the following equations
for $i,j,k=1,\dotsc,n$:
\[
\begin{array}
[c]{l}
X^\sigma
\left(
g^1(X_i^\sigma ,X_j^\sigma )
\right)
=-\tfrac{1}{2}
\left(
g^1
\left(
X_j^\sigma ,X_k^\sigma
\right)
\dfrac{\partial F^k}{\partial \dot{x}^i}
+g^1\left(
X_i^\sigma ,X_k^\sigma
\right)
\dfrac{\partial F^k}{\partial \dot{x}^j}
\right)  ,\\
X_k^\sigma
\left(
g^1(X_i^\sigma ,X_j^\sigma )
\right)
=-\tfrac{1}{2}
\left(
g^1\left(
X_j^\sigma ,X_h^\sigma
\right)
\dfrac{\partial ^2F^h}
{\partial \dot{x}^i\partial \dot{x}^k}
+g^1\left(
X_i^\sigma ,X_h^\sigma
\right)
\dfrac{\partial ^2F^h}
{\partial \dot{x}^j\partial \dot{x}^k}
\right)  ,\\
\frac{\partial }
{\partial \dot{x}^k}
\left(
g^1(X_i^\sigma ,X_j^\sigma )
\right) =0,
\end{array}
\]
\[
\begin{array}
[c]{l}
X^\sigma
\left(
g^1\left(
\dfrac{\partial }{\partial \dot{x}^i},
\dfrac{\partial }{\partial \dot{x}^j}
\right)
\right)
=-\tfrac{1}{2}
\left(
g^1\left(
\dfrac{\partial }{\partial \dot{x}^j},
\dfrac{\partial }{\partial \dot{x}^k}
\right)
\dfrac{\partial F^k}{\partial \dot{x}^i}
+g^1\left(
\dfrac{\partial }{\partial \dot{x}^i},
\dfrac{\partial }{\partial \dot{x}^k}
\right)
\dfrac{\partial F^k}{\partial \dot{x}^j}
\right) ,\\
X_k^\sigma
\left(
g^1\left(
\dfrac{\partial }{\partial \dot{x}^i},
\dfrac{\partial }{\partial \dot{x}^j}
\right)
\right)
=-\tfrac{1}{2}
\left(
g^1\left(
\dfrac{\partial }{\partial \dot{x}^j},
\dfrac{\partial }{\partial \dot{x}^h}
\right)
\dfrac{\partial ^2F^h}
{\partial \dot{x}^i\partial \dot{x}^k}
+g^1\left(
\dfrac{\partial }{\partial \dot{x}^i},
\dfrac{\partial }{\partial \dot{x}^h}
\right)
\dfrac{\partial^2F^h}{\partial \dot{x}^j
\partial \dot{x}^k}
\right) ,\\
\frac{\partial }{\partial \dot{x}^k}
\left(
g^1\left(
\dfrac{\partial }{\partial \dot{x}^i},
\dfrac{\partial }{\partial \dot{x}^j}
\right)
\right) =0.
\end{array}
\]
As the first group of three equations above
is equivalent to the second group, taking
\eqref{g} into account, these six equations
reduce to the following:
\[
0=\delta _{ba}
\left\{
2X^\sigma
\left(
\left(
\Lambda ^{-1}
\right) _i^b
\left(
\Lambda ^{-1}
\right) _j^a
\right)
+\left(
\left(
\Lambda ^{-1}
\right) _j^b
\dfrac{\partial F^k}{\partial \dot{x}^i}
+\left(
\Lambda ^{-1}
\right) _i^b
\dfrac{\partial F^k}{\partial \dot{x}^j}
\right)
\left(
\Lambda ^{-1}\right) _k^a
\right\} ,
\]
\begin{align*}
0  & =\delta_{ba}
\left\{
2X_k^\sigma
\left(
\left(
\Lambda ^{-1}
\right) _i^b
\left(
\Lambda ^{-1}
\right)  _j^a
\right)
\right. \\
& \left.
+\left(
\left(
\Lambda^{-1}
\right) _j^b
\dfrac{\partial ^2F^h}
{\partial \dot{x}^i\partial \dot{x}^k}
+\left(
\Lambda ^{-1}
\right) _i^b
\dfrac{\partial ^2F^h}
{\partial \dot{x}^j\partial \dot{x}^k}
\right)
\left(
\Lambda ^{-1}
\right) _h^a\right\} ,
\end{align*}
\[
\begin{array}
[c]{l}
\delta _{ba}\dfrac{\partial }
{\partial \dot{x}^k}
\left(
\left(
\Lambda ^{-1}
\right) _i^b
\left(
\Lambda {-1}
\right) _j^a
\right)
=0.
\end{array}
\]

For $a\neq b$ all these equations vanish
identically and for $a=b$, by writing
$U=\left(
\Lambda ^t
\right) ^{-1}\Lambda ^{-1}$,
such equations are
\begin{align}
X^\sigma (U)+UW+(UW)^t
& =0,\label{ecuacion1}\\
X_k^\sigma (U)+UV_k
+\left(
UV_k
\right) ^t
& =0,\label{ecuacion2}
\end{align}
where $V_k=(v_{ik}^h)$,
$v_{ik}^h=\tfrac{1}{2}
\dfrac{\partial^2F^h}
{\partial \dot{x}^i\partial \dot{x}^k}$,
together with the following:
\begin{equation}
\label{ecuacion3}
\dfrac{\partial U}{\partial \dot{x}^k}=0.
\end{equation}
The equations \eqref{ecuacion3} are equivalent
to saying the entries $u_j^i$ of $U$ belong
to $C^\infty (M^0)$. Hence, the equations
\eqref{ecuacion1}, \eqref{ecuacion2} can also
be rewritten respectively as
\begin{align*}
\frac{\partial U}{\partial t}
+\dot{x}^i
\frac{\partial U}{\partial x^i}
+UW+(UW)^t
& =0,\\
\frac{\partial U}{\partial x^k}
+UV_k
+\left(
UV_k
\right) ^t
& =0.
\end{align*}
Finally, we claim that the second equation
is a consequence of the first one
and \eqref{ecuacion3}, as follows taking
derivatives with respect to $\dot{x}^k$
in the first equation above.
\end{proof}

\begin{remark}
As a simple---but rather long---computations
shows, the integrability conditions
for the system \eqref{ecuacion1},
\eqref{ecuacion2}, and \eqref{ecuacion3} are
\begin{equation}
\begin{array}
[c]{ll}
0=UR_{ij}
+\left(
R_{ij}\right) ^tU,
& 1\leq i\leq j\leq n,\\
0=UB_{ij}
+\left(
B_{ij}
\right) ^tU,
& 1\leq i<j\leq n,\\
0=UA_j
+\left(
A_j
\right) ^tU,
& 1\leq j\leq n,
\end{array}
\label{UABR}
\end{equation}
where the square matrices
$A_j$, $B_{ij}$, $R_{ij}$
are given by $A_j=(A_{kj}^h)$,
$B_{ij}=(B_{ijk}^h)$,
$R_{ij}=(R_{ijk}^h)$,
and the functions $A_{kj}^h$,
$B_{ijk}^h$, $R_{ijk}^h$ are defined
by the formulas \eqref{A's},
\eqref{B's}, \eqref{R's}, respectively.
If $U$ is a positive definite symmetric
matrix, then
\[
\mathfrak{g}_U
=\left\{
X\in \mathfrak{gl}(n,\mathbb{R}):UX+X^tU
=0
\right\}
\]
is a Lie subalgebra of dimension
$\frac{1}{2}n(n-1)$. Hence, if a matrix
$U$ exists satisfying \eqref{UABR},
then the dimension of the subalgebra in
$\mathfrak{gl}(n,\mathbb{R})$ generated
by the $n(n+1)$ matrices
$\left\{
A_h
\right\} _{h=1}^{n}$,
$\left\{
B_{ij}
\right\} _{1\leq i<j\leq n}$,
$\left\{
R_{kl}
\right\} _{1\leq k\leq l\leq n}$
must be $\leq\frac{1}{2}n(n-1)$.
\end{remark}

\begin{remark}
\label{remark3}
The matrix $U$ depends only on the symmetric part
of the polar decomposition of $\Lambda $; namely,
if $\Lambda =SR$, where $R\in SO(n)$ and $S$ is a
positive definite symmetric matrix, then $U=S^{-2}$.
\end{remark}

\begin{example}
According to Remark \ref{remark_connection},
the Levi-Civita connection of a pseudo-Riemannian
metric $g=g_{ij}dx^i\otimes dx^j$ on $M$ induces
a homogeneous quadratic SODE
$\sigma $ independent of $t$, given by
$F^h=F_{ij}^h\dot{x}^i\dot{x}^j$, where
\[
F_{ij}^h
=-\tfrac{1}{2}g^{hk}
\left(
\frac{\partial g_{ki}}{\partial x^j}
+\frac{\partial g_{jk}}{\partial x^i}
-\frac{\partial g_{ji}}{\partial x^k}
\right) .
\]
As in Remark \ref{remark_connection}
the component of the Chern connection
in the tangent bundle $TM$ coincides
with the horizontal lift $\nabla ^H$
of the Levi-Civita connection $\nabla $
of $g$, which is known to be a metric
connection with respect to the metric
$g_{I\!I}$ on $TM$ defined in
\cite[p.\ 137]{YanoIshihara}; in the present
case, $g_{I\!I}$ takes the form
$g_{I\!I}=g_{ij}(\omega ^i\otimes \varpi ^j
+\varpi ^i\otimes \omega ^j)+g_{ij}\dot{x}^i
(dt\otimes \varpi ^j+\varpi ^j\otimes dt)$.
Using this fact, it is readily seen that
$\nabla ^\sigma $ parallelizes the metric
$h^1=dt\otimes dt+g_{I\!I}$ on $M^1$.

Moreover, the equations \eqref{PDE}
hold identically for $U=(g_{ij})_{i,j=1}^n$,
and using Remark \ref{remark3} without lost
of generality we can assume
$\Lambda =U^{-\frac{1}{2}}$. From the results
of Proposition \ref{matrixU} we thus conclude
that $\nabla ^\sigma $ parallelizes
also the metric $g^1=dt\otimes dt
+g_{ij}\omega ^i\otimes \omega ^j
+g_{ij}\varpi ^i\otimes \varpi ^j$.
If $g$ is a Riemannian metric, then $g^1$
is also Riemannian but $h^1$ is maximally
hyperbolic; in fact, its signature is $(n+1,n)$.
\end{example}

\section{Naturality of the Chern connection}
\label{section_naturality}

A diffeomorphism $\Phi \colon M^0\to M^0$
is said to be a $p$-vertical automorphism
of the submersion $p\colon M^0\to \mathbb{R}$
if it takes the form
$\Phi (t,x)
=(t,\phi (t,x))$,
$\forall(t,x)\in M^0$,
$\phi \in C^\infty (M^0,M)$.
The set of such transformations is a group
with respect to composition of maps,
denoted by $\mathrm{Aut}^v(p)$.
For each $r\geq 0$, every
$\Phi \in \mathrm{Aut}^v(p)$ induces
a diffeomorphism
$\Phi ^{(r)}\colon M^r\to M^r$
by setting
\begin{equation}
\label{Phi^r}
\Phi ^{(r)}
\left(
j_t^r\gamma
\right)
=j_t^r
\left(
\Phi \circ j^0\gamma
\right) ,
\quad
\forall \gamma \in C^\infty (\mathbb{R},M).
\end{equation}
If $f\colon N\to N$ is a diffeomorphism and
$X\in \mathfrak{X}(N)$,
then $f\cdot X\in \mathfrak{X}(N)$ is defined
by $(f\cdot X)_X=f_\ast (X_{f^{-1}(x)})$,
$\forall x\in N$.

The connection $\nabla ^\sigma $ enjoys
the important property
of being functorial with respect
to the $p$-vertical automorphisms;
more precisely,

\begin{theorem}
\label{Functoriality1}
For every SODE $\sigma$ on $M$
and every $\Phi \in \mathrm{Aut}^v(p)$,
let $\Phi \cdot \nabla ^\sigma $
be the linear connection defined by,
\[
\left(
\Phi \cdot \nabla ^\sigma
\right) _XY
=\Phi ^{(1)}\cdot \Bigl(
\left(
\nabla ^\sigma
\right) _{(\Phi ^{(1)})^{-1}\cdot X}
\Bigl(
(\Phi ^{(1)})^{-1}\cdot Y
\Bigr)
\Bigr) ,
\quad
\forall X,Y\in \mathfrak{X}(M^1),
\]
and let $\Phi \cdot \sigma $ be
the SODE defined as follows:
\[
\Phi \cdot \sigma =\Phi ^{(2)}
\circ \sigma \circ (\Phi ^{(1)})^{-1},
\]
\[
\begin{array}
[c]{lll}
M^1\smallskip
& \overset\sigma {\longrightarrow \smallskip}
& M^2\smallskip \\
\!\!\!\!\!\!\!\!\!\!\!\!\!\!\!\!\!\!\!\!\!\!
(\Phi ^{(1)})^{-1}\uparrow \smallskip
&  & \;\downarrow \Phi ^{(2)}
\smallskip\\
M^1
& \overset{\Phi \cdot \sigma }{\longrightarrow }
& M^2
\end{array}
\]
Then,
\[
\Phi \cdot \nabla ^\sigma
=\nabla ^{\Phi \cdot \sigma }.
\]
\end{theorem}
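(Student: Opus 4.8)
The plan is to reduce the statement to the uniqueness already built into the two characterizations of $\nabla^\sigma$. Since the prolongation maps commute with the projections $p^{rs}$, the diffeomorphism $\Phi^{(1)}\colon M^1\to M^1$ is a bundle automorphism of $p^{10}$ covering $\Phi\colon M^0\to M^0$, and $\Phi^{(2)}$ covers $\Phi^{(1)}$; in particular $\Phi\cdot\sigma=\Phi^{(2)}\circ\sigma\circ(\Phi^{(1)})^{-1}$ is again a section of $p^{21}$, i.e.\ a genuine SODE on $M$, so that $\nabla^{\Phi\cdot\sigma}$ makes sense. The first point I would record is that $\Phi$ carries solutions of $\sigma$ to solutions of $\Phi\cdot\sigma$: if $j^2\gamma=\sigma\circ j^1\gamma$ and $\bar\gamma=p^{\prime}\circ\Phi\circ j^0\gamma$, then by \eqref{Phi^r} one has $j^1\bar\gamma=\Phi^{(1)}\circ j^1\gamma$ and $j^2\bar\gamma=(\Phi\cdot\sigma)\circ j^1\bar\gamma$; differentiating the prolonged curves yields immediately the naturality of the dynamical flow, $X^{\Phi\cdot\sigma}=\Phi^{(1)}\cdot X^\sigma$.

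Next I would check, one ingredient at a time, that every object entering the characterization of the Chern connection is natural under pushforward by $\Phi^{(1)}$. Because $\Phi$ has the form $(t,x)\mapsto(t,\phi(t,x))$, the map $\Phi^{(1)}$ fixes $t$ and $dt$ and a short computation gives $(\Phi^{(1)})^{\ast}\omega^i=(\partial\phi^i/\partial x^j)\,\omega^j$ and $(\Phi^{(1)})_\ast(\partial/\partial\dot x^i)=(\partial\phi^j/\partial x^i)\,\partial/\partial\dot x^j$; hence the fundamental tensor $J=\omega^i\otimes\partial/\partial\dot x^i$ is $\Phi^{(1)}$-invariant, and, $\Phi^{(1)}$ being affine on the fibres of $p^{10}$ with linear part the differential of $\phi(t,\cdot)$, so is the directional-derivative map $\varepsilon$. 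Combining the invariance of $J$ with $X^{\Phi\cdot\sigma}=\Phi^{(1)}\cdot X^\sigma$ and the naturality of the Lie derivative gives $L_{X^{\Phi\cdot\sigma}}J=\Phi^{(1)}\cdot(L_{X^\sigma}J)$; together with the $\Phi^{(1)}$-equivariance of the exact sequence \eqref{exactseq} this yields the naturality of the splitting, $H^{\Phi\cdot\sigma}=\Phi^{(1)}\cdot H^\sigma$, of $E^\sigma$, of the curvature form $K^\sigma$, and finally of the torsion tensor of \eqref{torsion}: $T^{\Phi\cdot\sigma}=\Phi^{(1)}\cdot T^\sigma$.

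To conclude I would observe that conjugating a linear connection on $M^1$ by the diffeomorphism $\Phi^{(1)}$ is compatible with all the tensorial conditions (1)--(4): if $\nabla^\sigma$ parallelizes $X^\sigma$, $L_{X^\sigma}J$ and $E^\sigma$, then $\Phi\cdot\nabla^\sigma$ parallelizes $\Phi^{(1)}\cdot X^\sigma=X^{\Phi\cdot\sigma}$, $\Phi^{(1)}\cdot(L_{X^\sigma}J)=L_{X^{\Phi\cdot\sigma}}J$ and $\Phi^{(1)}\cdot E^\sigma=E^{\Phi\cdot\sigma}$, while its torsion is $\Phi^{(1)}\cdot T^\sigma=T^{\Phi\cdot\sigma}$. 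Thus $\Phi\cdot\nabla^\sigma$ satisfies the four defining conditions of the first characterization for the SODE $\Phi\cdot\sigma$, whence by the uniqueness there $\Phi\cdot\nabla^\sigma=\nabla^{\Phi\cdot\sigma}$. Equivalently, one may invoke Theorem~\ref{thm1}: the naturality of $X^\sigma$, of $V(p^{10})$ and of $H^\sigma\circ\varepsilon^{-1}$ shows that the automorphism induced by $\Phi^{(1)}$ on $F(M^1)$ sends the $G$-structure $P^\sigma$ of Proposition~\ref{Prop2} to $P^{\Phi\cdot\sigma}$, so $\Phi\cdot\nabla^\sigma$ is reducible to $P^{\Phi\cdot\sigma}$ with torsion $T^{\Phi\cdot\sigma}$, and the vanishing $\mathfrak{g}^{(1)}=\{0\}$ of Proposition~\ref{Prop2.1} forces $\Phi\cdot\nabla^\sigma=\nabla^{\Phi\cdot\sigma}$.

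The difficulty here is not conceptual but one of bookkeeping: one must phrase each naturality assertion for the correct natural bundle over $M^1$ — built only from the fibrations $p^{10}$ and $p$, which $\Phi^{(1)}$ does respect — and keep the pushforward conventions for the various tensor types consistent. The two computations I would do carefully first are the $\Phi^{(1)}$-invariance of $J$ and the affineness of $\Phi^{(1)}$ on the fibres of $p^{10}$, since these are precisely the places where the $p$-verticality of $\Phi$ is used (and where the naturality would fail for a general automorphism of $p$); everything else is formal propagation through the constructions of Sections~\ref{section_G-structure} and \ref{section_characterizations}.
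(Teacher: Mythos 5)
Your argument is correct, but it follows a genuinely different route from the paper's. The paper proves the theorem by brute force in adapted local frames: it derives the transformation rules \eqref{x_dot}--\eqref{Phixdot} by the chain rule, establishes $\Phi^{(1)}\cdot X^\sigma=X^{\Phi\cdot\sigma}$, $\Phi^{(1)}\cdot X_i^\sigma=(\phi_i^h\circ\Phi^{-1})X_h^{\Phi\cdot\sigma}$ and the second-derivative identities \eqref{partialF}, \eqref{Partial2F}, and then checks entry by entry that $\Phi\cdot\nabla^\sigma$ reproduces the table \eqref{TablaPhi} of Christoffel-type coefficients of $\nabla^{\Phi\cdot\sigma}$. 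You instead verify equivariance of the defining data --- $X^\sigma$ (via solutions and \eqref{Phi^r}), $J$, $\varepsilon$, hence $L_{X^\sigma}J$, $H^\sigma$, $E^\sigma$, $K^\sigma$ and $T^\sigma$ --- and then invoke the uniqueness clause of the first characterization (or, equivalently, Theorem~\ref{thm1} together with the fact that $F(\Phi^{(1)})$ maps $P^\sigma$ onto $P^{\Phi\cdot\sigma}$ and Proposition~\ref{Prop2.1}); since those characterizations are proved in Section~\ref{section_characterizations} independently of any functoriality, there is no circularity, and the standard facts that a pushed-forward connection parallelizes pushed-forward tensors and has pushed-forward torsion are exactly the ones the paper itself quotes from Kobayashi--Nomizu in Corollary~\ref{CorollaryFunctoriality1}. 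What your route buys is brevity, conceptual transparency, and an explicit identification of where $p$-verticality is indispensable (invariance of $dt$ and of $J$, which fails under time reparametrizations); what the paper's computation buys is the explicit formulas \eqref{x_dot}--\eqref{Phixdot}, \eqref{partialF}, \eqref{Partial2F}, which are reused later, notably in the equivariance argument for the curvature mapping in Section~\ref{section_invariants}. If you write your version up, the two places to be fully explicit are precisely the ones you flag: the computation $(\Phi^{(1)})^\ast\omega^i=(\partial\phi^i/\partial x^j)\,\omega^j$ giving $\Phi^{(1)}\cdot J=J$, and the affineness of $\Phi^{(1)}$ on the fibres of $p^{10}$ giving the equivariance of $\varepsilon$; the remaining naturality statements ($H^\sigma$, $K^\sigma$, $E^\sigma$, $T^\sigma$) then follow formally, as you say.
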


\begin{proof}
The definition of $\Phi \cdot \sigma $ makes sense,
as it is readily seen that the map
$\Phi \cdot \sigma \colon M^1\to M^2$ is a section
of $p^{21}$.

By applying the chain rule twice, we obtain
\begin{align}
\dot{x}^h\circ \Phi ^{(1)}
 & =\phi _t^h+\phi _i^h\dot{x}^i,
 \label{x_dot}\\
\ddot{x}^h \circ \Phi ^{(2)}
& =\phi _{tt}^h+2\phi _{ti}^h\dot{x}^i
+\phi _{ij}^h\dot{x}^i\dot{x}^j
+\phi _i^h\ddot{x}^i,
\label{x_dot_dot}
\end{align}
where $\phi ^h=x^h\circ \Phi $,
$\phi _t^h=\frac{\partial\phi ^h}{\partial t}$,
$\phi _i^h=\frac{\partial\phi ^h}{\partial x^i}$,
$\phi _{ti}^h=\frac{\partial^2\phi ^h}
{\partial t\partial x^i}$,
$\phi _{ij}^h=\frac{\partial^2\phi ^h}
{\partial x^i\partial x^j}$, etc. Hence
\begin{align}
\qquad
\Phi ^{(1)}
\cdot \frac{\partial }{\partial t}
& =\frac{\partial }{\partial t}
+\left(
\phi _t^h\circ \Phi ^{-1}
\right)
\frac{\partial }{\partial x^h}
+\left(
\phi _{tt}^h+\phi _{ti}^h\dot{x}^i
\right)
\circ (\Phi ^{(1)})^{-1}
\frac{\partial }{\partial \dot{x}^h},
\label{Phi1_t}\\
\qquad
\Phi ^{(1)}
\cdot \frac{\partial }{\partial x^i}
& =\left(
\phi _i^h\circ \Phi ^{-1}
\right)
\frac{\partial }{\partial x^h}
+\left(
\phi _{ti}^h+\phi _{ij}^h\dot{x}^j
\right)
\circ (\Phi ^{(1)})^{-1}
\frac{\partial }{\partial \dot{x}^h},
\label{Phi1_x}\\
\qquad
\Phi ^{(1)}\cdot
\frac{\partial }{\partial \dot{x}^i}
& =\left(
\phi _i^h\circ\Phi ^{-1}
\right)
\frac{\partial }{\partial \dot{x}^h}.
\label{Phi1_x_dot}
\end{align}

We first prove that for every
$\Phi \in \mathrm{Aut}^v(p)$
the following formula holds:
\begin{equation}
\label{PhiXsigma}
\Phi ^{(1)}\cdot X^\sigma
=X^{\Phi \cdot \sigma }.
\end{equation}

If
$F^h(\Phi \cdot \sigma )
=\ddot{x}^h\circ (\Phi \cdot \sigma )$,
then from the formula \eqref{x_dot_dot}
we obtain
\begin{equation}
\label{FhPhi_sigma}
F^h
\left(
\Phi \cdot \sigma
\right)
\circ \Phi ^{(1)}=\phi _{tt}^h
+2\phi _{ti}^h\dot{x}^i
+\phi _{ij}^h\dot{x}^i\dot{x}^j
+\phi _i^hF^i(\sigma ),
\end{equation}
and by using the formulas \eqref{x_dot},
\eqref{Phi1_t}, \eqref{Phi1_x},
and \eqref{Phi1_x_dot} we deduce
{\small
\begin{align*}
\Phi ^{(1)}\cdot X^\sigma
&  =\frac{\partial }{\partial t}
+\left(
\phi _t^h\circ \Phi ^{-1}
\right)
\frac{\partial }{\partial x^h}
+\left(
\phi _{tt}^h+\phi _{ti}^h\dot{x}^i
\right)
\circ (\Phi ^{(1)})^{-1}
\frac{\partial }{\partial \dot{x}^h}\\
&  +\Bigl(
\dot{x}^i\circ
\left(
\Phi ^{(1)}
\right) ^{-1}
\Bigr)
\left\{
\left(
\phi _i^h\circ \Phi ^{-1}
\right)
\frac{\partial }{\partial x^h}
+\left(
\phi _{ti}^h+\phi _{ij}^h\dot{x}^j
\right)
\circ
\left(
\Phi ^{(1)}
\right) ^{-1}
\frac{\partial }{\partial \dot{x}^h}
\right\} \\
&  +
\Bigl(
F^i(\sigma )\circ
\left(
\Phi ^{(1)}\right) ^{-1}
\Bigr)
\left(
\phi _i^h\circ \Phi ^{-1}
\right)
\frac{\partial }{\partial \dot{x}^h}\\
&  =X^{\Phi \cdot \sigma }.
\end{align*}
}
Similarly, we obtain
\begin{align}
\Phi ^{(1)}\cdot X_i^\sigma
& =\left(
\phi _i^h\circ \Phi ^{-1}
\right)
X_h^{\Phi \cdot \sigma },
\label{PhiXisigma}\\
\Phi ^{(1)}
\cdot \frac{\partial }{\partial \dot{x}^i}
& =\left(
\phi _i^h\circ\Phi ^{-1}
\right)
\frac{\partial }{\partial \dot{x}^h}.
\label{Phixdot}
\end{align}
According to \eqref{Tabla} we know the connection
$\nabla ^{\Phi \cdot \sigma }$ is given by,
{\scriptsize
\begin{equation}
\begin{array}
[c]{lll}
\nabla _{X^{\Phi \cdot \sigma }}^{\Phi \cdot \sigma }
X^{\Phi \cdot \sigma }
\! = \! 0,
& \!\!\!
\nabla _{X^{\Phi \cdot \sigma }}^{\Phi \cdot \sigma }
X_i^{\Phi \cdot \sigma }
\! = \! -\frac{1}{2}
\tfrac{\partial F^j(\Phi \cdot \sigma )}
{\partial \dot{x}^i}X_j^{\Phi \cdot \sigma },
& \!\!\!
\nabla _{X^{\Phi \cdot \sigma }}^{\Phi \cdot \sigma }
\tfrac{\partial }{\partial \dot{x}^i}
\!=\!-\frac{1}{2}
\tfrac{\partial F^j(\Phi \cdot \sigma )}
{\partial \dot{x}^i}
\tfrac{\partial }{\partial \dot{x}^j},
\medskip\\
\nabla _{X_i^{\Phi \cdot \sigma }}^{\Phi \cdot \sigma }
X^{\Phi \cdot \sigma }
\! = \! 0,
& \!\!\!
\nabla _{X_j^{\Phi \cdot \sigma}}^{\Phi \cdot \sigma }
X_i^{\Phi \cdot \sigma }
\! = \!
-\frac{1}{2}
\tfrac{\partial ^2F^k(\Phi \cdot \sigma )}
{\partial \dot{x}^i\partial \dot{x}^j}
X_k^{\Phi \cdot \sigma },
& \!\!\!
\nabla _{X_i^{\Phi \cdot \sigma }}^{\Phi \cdot \sigma}
\tfrac{\partial }{\partial \dot{x}^j}
\! = \!
-\frac{1}{2}
\tfrac{\partial^2F^k(\Phi \cdot \sigma )}
{\partial \dot{x}^i\partial \dot{x}^j}
\tfrac{\partial }
{\partial \dot{x}^k},\medskip\\
\nabla _{\frac{\partial }
{\partial \dot{x}^i}}^{\Phi \cdot \sigma }
X^{\Phi \cdot \sigma }
\! = \! 0,
& \!\!\!
\nabla _{\frac{\partial }
{\partial \dot{x}^i}}^{\Phi \cdot \sigma }
X_j^{\Phi \cdot\sigma}
\! = \! 0,
& \!\!\!
\nabla _{\frac{\partial }
{\partial \dot{x}^i}}^{\Phi \cdot \sigma }
\tfrac{\partial }{\partial \dot{x}^j}
\! = \! 0.
\end{array}
\label{TablaPhi}
\end{equation}
}

Hence, we need only to prove that the formulas
in \eqref{TablaPhi} also hold when
$\nabla ^{\Phi \cdot \sigma }$ is replaced by
$\Phi \cdot \nabla ^\sigma $. We have
\begin{align*}
\left(
\Phi \cdot \nabla ^\sigma
\right) _{Y}X^{\Phi \cdot \sigma } &
=\Phi ^{(1)}
\cdot \Bigl(
\left(
\nabla ^\sigma
\right) _{(\Phi ^{(1)})^{-1}\cdot Y}
\Bigl(
(\Phi ^{(1)})^{-1}\cdot X^{\Phi \cdot \sigma}
\Bigr)
\Bigr) \\
& \overset{\text{\eqref{PhiXsigma}}}{=}\Phi ^{(1)}
\cdot \Bigl(
\nabla _{(\Phi ^{(1)})^{-1}\cdot Y}^\sigma
X^\sigma
\Bigr) \\
& =0,
\end{align*}
for every $Y\in \mathfrak{X}(M^1)$.
As $\Phi $ is a diffeomorphism, the matrix
$(\phi _i^h)_{h,i=1,\dotsc,n}$ is non-singular;
we set
$\Psi =(\psi_i^h)=(\phi _i^h)^{-1}$.
By replacing $\Phi $ (resp.\ $\sigma $)
by $\Phi ^{-1}$ (resp.\ $\Phi \cdot \sigma $)
in \eqref{PhiXisigma} we obtain
$(\Phi ^{(1)})^{-1}\cdot X_i^{\Phi \cdot \sigma }
=\psi _i^hX_h^\sigma $. Hence
\[
\begin{array}
[c]{rl}
\left(
\Phi \cdot \nabla ^\sigma
\right) _{X^{\Phi \cdot \sigma }}
X_i^{\Phi \cdot \sigma }
= & \Phi ^{(1)}\cdot \Bigl(
\left(
\nabla ^\sigma
\right) _{(\Phi ^{(1)})^{-1}
\cdot X^{\Phi \cdot \sigma }}
\left(
(\Phi ^{(1)})^{-1}\cdot X_i^{\Phi \cdot \sigma }
\right)
\Bigr)
\smallskip\\
= & \Phi ^{(1)}\cdot
\left(
\nabla _{X^\sigma }^\sigma (\psi_i^h
X_h^\sigma )
\right)
\smallskip\\
= & \Phi ^{(1)}\cdot
\left\{
\left(
X^\sigma (\psi _i^h)
-\tfrac{1}{2}\psi _i^j
\tfrac{\partial F^h(\sigma )}
{\partial \dot{x}^j}
\right)
X_h^\sigma
\right\}
\smallskip\\
= & -\Phi ^{(1)}
\cdot \left\{
\left(
\psi _r^hX^\sigma (\phi _j^r)
+\tfrac{1}{2}
\tfrac{\partial F^h(\sigma )}
{\partial \dot{x}^j}
\right)
\psi _i^jX_h^\sigma
\right\}
\smallskip\\
= &
-\left\{
\left(
\psi_r^hX^\sigma (\phi _j^r)
+\tfrac{1}{2}
\tfrac{\partial F^h(\sigma )}
{\partial \dot{x}^j}
\right)
\psi _i^j
\right\}
\circ(\Phi ^{(1)})^{-1}
\!
\left(
\Phi ^{(1)}\cdot X_h^\sigma
\right)
\smallskip\\
\text{{\small by virtue of \eqref{PhiXisigma}}}
= &
-\left(
X^\sigma (\phi _j^a)\psi_i^j
\right)
\circ
\left(
\Phi ^{(1)}
\right) ^{-1}
X_a^{\Phi \cdot \sigma }\smallskip\\
& -\tfrac{1}{2}
\left(
\phi _h^a\circ \Phi ^{-1}
\right)
\left(
\tfrac{\partial F^h(\sigma )}
{\partial \dot{x}^j}\psi_i^j
\right)
\circ\left(
\Phi ^{(1)}
\right) ^{-1}
X_a^{\Phi \cdot \sigma }\smallskip\\
= & -\tfrac{1}{2}
\frac{\partial F^a(\Phi \cdot \sigma )}
{\partial \dot{x}^i}
X_a^{\Phi \cdot \sigma },
\end{array}
\]
as
\begin{equation}
\label{partialF}
\frac{\partial F^h(\Phi \cdot \sigma )}
{\partial \dot{x}^i}
=2\left(
X^\sigma (\phi _b^h)\psi _i^b
\right)
\circ \Phi ^{-1}
+\Bigl(
\phi _a^h\frac{\partial F^a(\sigma )}
{\partial \dot{x}^b}\psi _i^b
\Bigr)
\circ(\Phi ^{(1)})^{-1},
\end{equation}
which follows taking derivatives
with respect to $\dot{x}^i$
in the formula \eqref{FhPhi_sigma}
and taking \eqref{x_dot} into account.
Similarly,

\[
\begin{array}
[c]{rl}
\left(
\Phi \cdot \nabla ^\sigma
\right) _{X_j^{\Phi \cdot \sigma }}
X_i^{\Phi \cdot \sigma}
= & \!\! \Phi ^{(1)}\cdot
\Bigl(
\left(
\nabla ^\sigma
\right) _{\left( \Phi ^{(1)}\right) ^{-1}
\cdot X_j^{\Phi \cdot \sigma}}
\left(
(\Phi ^{(1)})^{-1}
\cdot X_i^{\Phi \cdot \sigma }
\right)
\Bigr)
\smallskip\\
= & \!\!
\Phi ^{(1)}
\cdot \left(
\nabla _{\psi _j^kX_k^\sigma }^\sigma
(\psi _i^hX_h^\sigma )
\right)
\smallskip\\
= & \!\!\Phi ^{(1)}
\cdot \left\{
\psi_j^k
\left(
X_k^\sigma (\psi _i^h)
-\tfrac{1}{2}\psi_i^a
\tfrac{\partial ^2F^h(\sigma )}
{\partial \dot{x}^a\partial \dot{x}^k}
\right)
X_h^\sigma
\right\}
\smallskip\\
= & \!\! \Phi ^{(1)}
\cdot \left\{
\psi _j^k
\left(
-\psi _r^hX_k^\sigma
\left(
\phi _a^r
\right)
-\tfrac{1}{2}
\tfrac{\partial ^2F^h(\sigma )}
{\partial \dot{x}^a\partial \dot{x}^k}
\right)
\psi _i^aX_h^\sigma
\right\}
\smallskip\\
= & \!\!
-\left\{
\psi _j^k
\left(
\psi_r^hX_k^\sigma
\left(
\phi _a^r
\right)
+\tfrac{1}{2}
\tfrac{\partial ^2F^h(\sigma )}
{\partial \dot{x}^a\partial \dot{x}^k}
\right)
\psi _i^a
\right\}
\!
\circ \! (\Phi ^{(1)})^{-1}
\left(
\Phi ^{(1)}\! \cdot \! X_h^\sigma
\right)
\smallskip\\
= & \!\!
-\left\{
\psi _j^k\left(
\tfrac{1}{2}\psi _r^h\phi _{ak}^r
+\tfrac{1}{2}
\tfrac{\partial ^2F^h(\sigma)}
{\partial \dot{x}^a\partial \dot{x}^k}
\right)
\psi _i^a\!
\right\}
\! \circ \! (\Phi ^{(1)})^{-1}
\!
\left(
\phi _h^s\! \circ \! \Phi ^{-1}
\right)
\! X_s^{\Phi \cdot \sigma }
\smallskip\\
= & \!\!
-\tfrac{1}{2}
\left\{
\phi _{ak}^{s}\psi _j^k\psi _i^a
+\tfrac{\partial ^2F^h(\sigma )}
{\partial \dot{x}^a\partial \dot{x}^k}
\psi_j^k\psi_i^a\phi _h^s
\right\}
\circ (\Phi ^{(1)})^{-1}
X_s^{\Phi \cdot \sigma }
\smallskip \\
= & \!\!
-\tfrac{1}{2}
\tfrac{\partial ^2F^k(\Phi \cdot \sigma )}
{\partial \dot{x}^i\partial \dot{x}^j}
X_k^{\Phi \cdot \sigma },
\end{array}
\]
as
\begin{equation}
\label{Partial2F}
\frac{\partial ^2F^h(\Phi \cdot \sigma )}
{\partial \dot{x}^i\partial \dot{x}^j}
=\Bigl(
\phi _{ab}^h\psi _i^a\psi _j^b+\phi _a^h
\frac{\partial ^2F^a(\sigma )}
{\partial \dot{x}^b\partial \dot{x}^c}
\psi _j^c\psi_i^b
\Bigr)
\circ(\Phi ^{(1)})^{-1},
\end{equation}
and also
\begin{align*}
\left(
\Phi \cdot \nabla ^\sigma
\right) _{\frac{\partial }{\partial \dot{x}^i}}
X_j^{\Phi \cdot\sigma }
& =\Phi ^{(1)}\cdot
\Bigl(
\left(
\nabla ^\sigma
\right)  _{(\Phi ^{(1)})^{-1}\cdot
\frac{\partial }{\partial \dot{x}^i}}
\Bigl(
(\Phi ^{(1)})^{-1}\cdot X_j^{\Phi \cdot \sigma}
\Bigr)
\Bigr) \\
& =\Phi ^{(1)}\cdot
\Bigl(
\nabla _{\psi_i^k\frac{\partial }
{\partial \dot{x}^k}}^\sigma
(\psi _j^hX_h^\sigma )
\Bigr) \\
& =\Phi ^{(1)}\cdot
\Bigl(
\psi _i^k\psi_j^h
\nabla _{\frac{\partial }
{\partial \dot{x}^k}}^\sigma
X_h^\sigma
\Bigr) \\
& =0.
\end{align*}
Taking \eqref{partialF} into account,
 we obtain
\[
\begin{array}
[c]{rl}
\left(
\Phi \cdot\nabla ^\sigma
\right) _{X^{\Phi \cdot \sigma }}
\frac{\partial }{\partial \dot{x}^i}
= & \!
\Phi ^{(1)}\cdot
\Bigl(
\left(
\nabla ^\sigma
\right) _{(\Phi ^{(1)})^{-1}
\cdot X^{\Phi \cdot \sigma }}
\left(
(\Phi ^{(1)})^{-1}
\cdot\frac{\partial }{\partial \dot{x}^i}
\right)
\Bigr)
\smallskip\\
= & \! \Phi ^{(1)}
\cdot \left(
\nabla _{X^\sigma }^\sigma
\left( \psi_i^h
\frac{\partial }{\partial \dot{x}^h}
\right)
\right)
\smallskip\\
= & \! \Phi ^{(1)}
\cdot\left\{
\left(
X^\sigma
\left(
\psi_i^h
\right)
-\tfrac{1}{2}\psi_i^j
\tfrac{\partial F^h(\sigma )}
{\partial \dot{x}^j}
\right)
\frac{\partial }{\partial \dot{x}^h}
\right\}
\smallskip\\
= & \!-\Phi ^{(1)}
\cdot \left\{
\left(
\psi _r^hX^\sigma
\left(
\phi _j^r
\right)
+\tfrac{1}{2}
\tfrac{\partial F^h(\sigma)}
{\partial \dot{x}^j}
\right)
\psi_i^j\frac{\partial }
{\partial \dot{x}^h}
\right\}
\smallskip\\
= & \!
-\left\{
\left(
\psi _r^hX^\sigma
\left(
\phi _j^r
\right)
+\tfrac{1}{2}
\tfrac{\partial F^h(\sigma )}
{\partial \dot{x}^j}
\right)
\psi _i^j
\right\}
\circ (\Phi ^{(1)})^{-1}
\left(
\Phi ^{(1)}
\cdot \frac{\partial }{\partial \dot{x}^h}
\right) \smallskip\\
\text{{\small by virtue of \eqref{Phixdot}}}
= & \!
-\left\{
\left(
X^\sigma
\left(
\phi _j^a
\right)
\psi_i^j
\right)
\circ (\Phi ^{(1)})^{-1}\right\}
\frac{\partial }
{\partial \dot{x}^a}
\smallskip\\
& \!
-\tfrac{1}{2}
\left\{
\left(
\phi _h^a\psi_i^j
\tfrac{\partial F^h(\sigma )}
{\partial \dot{x}^j}
\right)
\circ (\Phi ^{(1)})^{-1}
\right\}
\frac{\partial }{\partial \dot{x}^a}
\smallskip \\
= & \! -\tfrac{1}{2}
\frac{\partial F^a(\Phi \cdot \sigma )}
{\partial \dot{x}^i}
\frac{\partial }{\partial \dot{x}^a}.
\end{array}
\]
Similarly, taking \eqref{Partial2F}
into account, we obtain
\[
\begin{array}
[c]{rl}
\left(
\Phi \cdot \nabla ^\sigma
\right) _{X_j^{\Phi \cdot \sigma }}
\frac{\partial }{\partial \dot{x}^i}
= & \Phi ^{(1)}\cdot
\Bigl(
\left(
\nabla ^\sigma
\right) _{(\Phi ^{(1)})^{-1}\cdot X_j^{\Phi \cdot \sigma}}
\left(
(\Phi ^{(1)})^{-1}
\cdot \frac{\partial }{\partial \dot{x}^i}
\right)
\Bigr)
\smallskip\\
= & \Phi ^{(1)}\cdot
\Bigl(
\nabla _{\psi_j^kX_k^\sigma }^\sigma
\left(  \psi_i^h\frac
{\partial }{\partial \dot{x}^h}
\right)
\Bigr) \smallskip\\
= & \Phi ^{(1)}\cdot
\left\{
\psi_j^k
\left(
\frac{\partial\psi_i^h}{\partial \dot{x}^k}
-\tfrac{1}{2}\psi _i^a
\tfrac{\partial^2F^h(\sigma )}
{\partial \dot{x}^a\partial \dot{x}^k}
\right)
\frac{\partial }{\partial \dot{x}^h}
\right\}
\smallskip\\
= & \Phi ^{(1)}\cdot
\left\{
\psi _j^k
\left(
-\psi _r^h\frac{\partial\phi _a^r}
{\partial \dot{x}^k}
-\tfrac{1}{2}
\tfrac{\partial ^2F^h(\sigma)}
{\partial \dot{x}^a\partial \dot{x}^k}
\right)
\psi _i^a
\frac{\partial }{\partial \dot{x}^h}
\right\}
\smallskip\\
= & -\left\{
\psi _j^k
\left(
\psi _r^h\frac{\partial\phi _a^r}
{\partial \dot{x}^k}
+\tfrac{1}{2}
\tfrac{\partial^2F^h(\sigma )}
{\partial \dot{x}^a\partial \dot{x}^k}
\right)
\psi _i^a
\right\}
\circ (\Phi ^{(1)})^{-1}
\left(
\Phi ^{(1)}
\cdot \frac{\partial }{\partial \dot{x}^h}
\right)
\smallskip\\
= & -\tfrac{1}{2}
\tfrac{\partial^2F^k(\Phi \cdot \sigma )}
{\partial \dot{x}^i\partial \dot{x}^j}
\frac{\partial }{\partial \dot{x}^k}.
\end{array}
\]

Finally,
\begin{align*}
\left(
\Phi \cdot \nabla ^\sigma
\right) _{\frac{\partial }{\partial \dot{x}^i}}
\frac{\partial }{\partial \dot{x}^j}
& =\Phi ^{(1)}\cdot \Bigl(
\nabla _{\psi _i^k
\frac{\partial }{\partial \dot{x}^k}}^\sigma
\Bigl(
\psi _j^h\frac{\partial }{\partial \dot{x}^h}
\Bigr)
\Bigr)
\\
& =\Phi ^{(1)}\cdot \Bigl(
\psi _i^k\psi _j^h
\nabla _{\frac{\partial }{\partial \dot{x}^k}}^\sigma
\frac{\partial }{\partial \dot{x}^h}
\Bigr)
\\
& =0.
\end{align*}
\end{proof}

\begin{corollary}
\label{CorollaryFunctoriality1}
If $\Phi \in \mathrm{Aut}^v(p)$
and $X,Y,Z\in \mathfrak{X}(M^1)$
are vector fields $\Phi ^{(1)}$-related to
$X^\prime ,Y^\prime ,Z^\prime
\in \mathfrak{X}(M^1)$ respectively, then
$T^\sigma (X,Y)$ (resp.\ $R^\sigma (X,Y)Z$)
is $\Phi ^{(1)}$-related to
$T^{\Phi \cdot \sigma }(X^\prime ,Y^\prime )$
(resp.\ $R^{\Phi \cdot \sigma }
(X^\prime ,Y^\prime )Z^\prime $).
\end{corollary}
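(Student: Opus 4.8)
The plan is to deduce the corollary from the functoriality identity $\Phi\cdot\nabla^\sigma=\nabla^{\Phi\cdot\sigma}$ of Theorem \ref{Functoriality1}, using only the fact that the torsion and curvature operators are built from the covariant derivative and the Lie bracket, each of which is natural with respect to diffeomorphisms.

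First I would translate the hypothesis into the notation of the excerpt: a vector field $X\in\mathfrak{X}(M^1)$ is $\Phi^{(1)}$-related to $X'\in\mathfrak{X}(M^1)$ precisely when $X'=\Phi^{(1)}\cdot X$, where $(f\cdot X)_x=f_\ast(X_{f^{-1}(x)})$, since $(\Phi^{(1)}\cdot X)_{\Phi^{(1)}(\xi)}=(\Phi^{(1)})_{\ast\xi}(X_\xi)$. With this dictionary, applying the definition of $\Phi\cdot\nabla^\sigma$ to the pair $X',Y'$ and invoking Theorem \ref{Functoriality1} gives exactly $\nabla^{\Phi\cdot\sigma}_{X'}Y'=\Phi^{(1)}\cdot(\nabla^\sigma_XY)$, i.e.\ $\nabla^\sigma_XY$ is $\Phi^{(1)}$-related to $\nabla^{\Phi\cdot\sigma}_{X'}Y'$ whenever $X\sim_{\Phi^{(1)}}X'$ and $Y\sim_{\Phi^{(1)}}Y'$. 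I would also record two elementary facts: the Lie bracket is natural, so $[X,Y]\sim_{\Phi^{(1)}}[X',Y']$, and $\Phi^{(1)}$-relatedness is preserved by $\mathbb{R}$-linear combinations, because for each $\xi\in M^1$ the tangent map $(\Phi^{(1)})_{\ast\xi}$ is linear.

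Then for the torsion I would simply write $T^\sigma(X,Y)=\nabla^\sigma_XY-\nabla^\sigma_YX-[X,Y]$: by the two naturality facts above each of the three summands on the right is $\Phi^{(1)}$-related to the corresponding summand of $\nabla^{\Phi\cdot\sigma}_{X'}Y'-\nabla^{\Phi\cdot\sigma}_{Y'}X'-[X',Y']=T^{\Phi\cdot\sigma}(X',Y')$, hence so is their sum. For the curvature I would argue identically starting from $R^\sigma(X,Y)Z=\nabla^\sigma_X\nabla^\sigma_YZ-\nabla^\sigma_Y\nabla^\sigma_XZ-\nabla^\sigma_{[X,Y]}Z$, applying the relatedness statement for $\nabla$ twice in the iterated terms (first $\nabla^\sigma_YZ\sim_{\Phi^{(1)}}\nabla^{\Phi\cdot\sigma}_{Y'}Z'$, then $\nabla^\sigma_X(\,\cdot\,)\sim_{\Phi^{(1)}}\nabla^{\Phi\cdot\sigma}_{X'}(\,\cdot\,)$) and once in the bracket term (using $[X,Y]\sim_{\Phi^{(1)}}[X',Y']$), and again closing up by linearity.

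The one point that needs care — and the step I expect to be the main, though minor, obstacle — is checking that Theorem \ref{Functoriality1} genuinely yields the pointwise relatedness statement for $\nabla^\sigma$ on arbitrary arguments, not merely on $\Phi^{(1)}$-invariant ones; this is exactly where the substitution $(\Phi^{(1)})^{-1}\cdot X$, $(\Phi^{(1)})^{-1}\cdot Y$ in the definition of $\Phi\cdot\nabla^\sigma$ does the work, since $\nabla^\sigma$ is itself an honest linear connection and $\Phi^{(1)}$ is a diffeomorphism. As an alternative one could bypass Theorem \ref{Functoriality1} and verify the corollary directly from the explicit expressions \eqref{Torsion} and \eqref{R_sigma} for $T^\sigma$ and $R^\sigma$ together with the transformation rules \eqref{x_dot}, \eqref{partialF}, \eqref{Partial2F} established in its proof; I would mention this but not carry it out, as the functorial argument is shorter and conceptually transparent.
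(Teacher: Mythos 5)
Your proposal is correct and in substance coincides with the paper's proof: the paper disposes of the corollary by citing \cite[VI, Proposition 1.2, (2), (3)]{KN}, which is exactly the statement that a diffeomorphism mapping one linear connection into another (here $\Phi ^{(1)}$, which maps $\nabla ^\sigma $ into $\nabla ^{\Phi \cdot \sigma }$ by Theorem \ref{Functoriality1}) carries the torsion and curvature of the first into those of the second. You simply prove that cited fact directly, via the relatedness of $\nabla ^\sigma _XY$ with $\nabla ^{\Phi \cdot \sigma }_{X^\prime }Y^\prime $ and the naturality of the Lie bracket applied to the defining expressions of $T^\sigma $ and $R^\sigma $, which is the standard argument and entirely adequate.
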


\begin{proof}
It follows from
\cite[VI, Proposition 1.2, (2), (3)]{KN}.
\end{proof}

\section{Differential invariants}
\label{section_invariants}

Let $(p^{21})^r\colon J^r(p^{21})\to M^1$
be the $r$-jet bundle of the submersion
$p^{21}\colon M^2\to M^1$.
According to \eqref{Phi^r}, every
$\Phi \in \mathrm{Aut}^v(p)$ induces in particular
diffeomorphisms $\Phi ^{(r)}\colon M^r\to M^r$,
$r=1,2$, such that $p^{21}\circ \Phi ^{(2)}
=\Phi ^{(1)}\circ p^{21}$. Hence, for every
$r\geq 0$, the pair $\Phi ^{(2)},\Phi ^{(1)}$
induces a transformation $(\Phi ^{(2)})^{(r)}
\colon J^r(p^{21})\to J^r(p^{21})$ given by
$(\Phi ^{(2)})^{(r)}(j_\xi ^r\sigma)
=j_{\Phi ^{(1)}(\xi )}^r(\Phi ^{(2)}
\circ \sigma \circ (\Phi ^{(1)})^{-1})$.
Let $\mathcal{U}\subseteq J^r(p^{21})$
be an open subset invariant under
all these transformations. A smooth function
$\mathcal{I}\colon \mathcal{U}\to \mathbb{R}$
is said to be a differential invariant of order
$r$ with respect to the group $\mathrm{Aut}^v(p)$
if
$\mathcal{I}\circ (\Phi ^{(2)})^{(r)}=\mathcal{I}$
for all $\Phi \in\mathrm{Aut}^v(p)$. If we set
$I(\sigma ,\xi )=\mathcal{I}(j_\xi ^r\sigma )$,
$\xi \in M^1$, for a given SODE
$\sigma $ on $M$, then the invariance condition
above reads as follows: $I\left( \Phi \cdot \sigma ,
\Phi ^{(1)}(\xi )\right) =I(\sigma ,\xi )$,
$\forall \xi \in M^1$,
$\forall \Phi \in \mathrm{Aut}^v(p)$, thus leading
one to the naive definition of an invariant,
as being a function depending on the components
of $\sigma $ and its partial derivatives up to
a certain order, which remains unchanged
under arbitrary changes of coordinates.

If $\Phi _t\in \mathrm{Aut}^v(p)$ is the flow
of a $p$-vertical vector field
$X\in \mathfrak{X}(M^0)$, then
$\Phi _t^{(2)}$ is the flow of a $p^2$-vertical
vector field $X^{(2)}\in \mathfrak{X}(M^2)$ and
$(\Phi _t^{(2)})^{(r)}$ is the flow of a vector
field $(X^{(2)})^{(r)}$ on $J^r(p^{21})$.
Every differential invariant of order $r$
is a first integral of the distribution
$\mathcal{D}^{(r)}$ on $J^r(p^{21})$ spanned
by all the jet prolongations $(X^{(2)})^{(r)}$
of $p$-vertical vector fields.

We claim that the only first integrals
of the distributions $\mathcal{D}^{(0)}$ and
$\mathcal{D}^{(1)}$ are
$(p^2)^\ast C^\infty (\mathbb{R})$ and
$((p^{21})^1)^\ast (p^1)^\ast C^\infty (\mathbb{R})$,
respectively. In fact, from the general formulas
of jet prolongation of vector fields (e.g.,
see \cite{Kumpera}, \cite{MM}), we obtain
\begin{align}
X & =u^i\frac{\partial }{\partial x^i},
\quad u^i\in C^\infty(M^0), \nonumber\\
X^{(2)} & =u^i\frac{\partial }{\partial x^i}
+v^i\frac{\partial }{\partial \dot{x}^i}
+w^i\frac{\partial }{\partial \ddot{x}^i},
\nonumber\\
v^i & =\frac{\partial u^i}{\partial t}
+\frac{\partial u^i}{\partial x^h}
\dot{x}^h,\label{v}\\
w^i & =\frac{\partial ^2u^i}{\partial t^2}
+2\frac{\partial ^2u^i}{\partial t\partial x^h}
\dot{x}^h
+\frac{\partial ^2u^i}{\partial x^h\partial x^k}
\dot{x}^h\dot{x}^k
+\frac{\partial u^i}{\partial x^h}\ddot{x}^h.
\label{w}
\end{align}
As the values of $u^i$, $\partial u^i/\partial t$,
$\partial u^i/\partial x^h$, and
$\partial ^2u^i/\partial t^2$ can arbitrarily
be taken at a given point $j_t^2\gamma \in M^2$,
we conclude that the distribution on $M^2$ generated
by all the vector fields $X^{(2)}$ span the
subbundle of $p^2$-vertical tangent vectors.
Hence, the only differential invariants
of order $0$ are the functions in
$(p^2)^\ast C^\infty (\mathbb{R})$.

By again computing the jet prolongation,
we obtain
\[
(X^{(2)})^{(1)}
=u^i\tfrac{\partial }{\partial x^i}
+v^i\tfrac{\partial }{\partial \dot{x}^i}
+w^i\tfrac{\partial }{\partial \ddot{x}^i}
+w_t^i\tfrac{\partial }{\partial \ddot{x}_t^i}
+w_a^i\tfrac{\partial }{\partial \ddot{x}_a^i}
+w_{\dot{a}}^i
\tfrac{\partial }{\partial \ddot{x}_{\dot{a}}^i},
\]
\begin{align}
w_t^i & =\tfrac{\partial ^3u^i}
{\partial t^3}
+2\tfrac{\partial ^3u^i}
{\partial t^2\partial x^h}
\dot{x}^h
+\tfrac{\partial ^3u^i}
{\partial t\partial x^h\partial x^k}
\dot{x}^h\dot{x}^k
+\tfrac{\partial^2u^i}
{\partial t\partial x^h}
\ddot{x}^h
\nonumber \\
& +\tfrac{\partial u^i}
{\partial x^h}
\ddot{x}_t^h
-\tfrac{\partial u^a}
{\partial t}\ddot{x}_a^i
-\tfrac{\partial^2u^a}
{\partial t^2}
\ddot{x}_{\dot{a}}^i
-\tfrac{\partial^2u^a}
{\partial t\partial x^h}
\dot{x}^h\ddot{x}_{\dot{a}}^i,
\nonumber\\
w_a^i & =\tfrac{\partial ^3u^i}
{\partial t^2\partial x^a}
+2\tfrac{\partial ^3u^i}
{\partial t\partial x^a\partial x^h}
\dot{x}^h
+\tfrac{\partial ^3u^i}
{\partial x^a\partial x^h\partial x^k}
\dot{x}^h\dot{x}^k
+\tfrac{\partial^2u^i}
{\partial x^a\partial x^h}
\ddot{x}^h\label{w_a}\\
& +\tfrac{\partial u^i}
{\partial x^h}
\ddot{x}_a^h
-\tfrac{\partial u^b}
{\partial x^a}\ddot{x}_b^i
-\tfrac{\partial^2u^b}
{\partial t\partial x^a}
\ddot{x}_{\dot{b}}^i
-\tfrac{\partial^2u^b}
{\partial x^a\partial x^h}
\dot{x}^h\ddot{x}_{\dot{b}}^i,
\nonumber\\
w_{\dot{a}}^i
& =2\tfrac{\partial^2u^i}
{\partial t\partial x^a}
+2\tfrac{\partial^2u^i}
{\partial x^a\partial x^h}
\dot{x}^h
+\tfrac{\partial u^i}{\partial x^h}
\ddot{x}_{\dot{a}}^h
-\tfrac{\partial
u^r}{\partial x^a}\ddot{x}_{\dot{r}}^i.
\label{w_apunto}
\end{align}
By collecting the derivatives
of the functions $u^i$ in the expression
above for $(X^{(2)})^{(1)}$ we conclude
\begin{multline*}
(X^{(2)})^{(1)}
=u^r\chi_{00}^r
+\tfrac{\partial u^r}{\partial t}
\chi_t^r
+\tfrac{\partial u^r}{\partial x^a}
\chi_a^r
+\tfrac{\partial^2u^r}{\partial t^2}
\chi_{tt}^r
+\tfrac{\partial^2u^r}
{\partial t\partial x^a}
\chi_{ta}^r
+\sum _{a\leq b}
\tfrac{\partial ^2u^r}
{\partial x^a\partial x^b}
\chi_{a\leq b}^r\\
+\tfrac{\partial ^3u^r}
{\partial t^3}
\chi_{ttt}^r
+\tfrac{\partial ^3u^r}
{\partial t^2\partial x^a}
\chi_{tta}^r+\sum _{a\leq b}
\tfrac{\partial ^3u^r}
{\partial t\partial x^a\partial x^b}
\chi_{t,a\leq b}^r
+\sum _{a\leq b\leq c}
\tfrac{\partial ^3u^r}
{\partial x^a\partial
x^b\partial x^c}
\chi_{a\leq b\leq c}^r,
\end{multline*}
where
\[
\begin{array}
[c]{l}
\chi_{00}^r
=\frac{\partial }{\partial x^r},\\
\chi_t^r
=\frac{\partial }{\partial\dot{x}^r}
-\ddot{x}_r^i
\frac{\partial }{\partial\ddot{x}_t^i},\\
\chi_a^r=\dot{x}^a
\frac{\partial }{\partial\dot{x}^r}
+\ddot{x}^a
\frac{\partial }{\partial\ddot{x}^r}
+\ddot{x}_t^a
\frac{\partial }{\partial\ddot{x}_t^r}
+\ddot{x}_{h}^a
\frac{\partial }{\partial\ddot{x}_{h}^r}
-\ddot{x}_r^i
\frac{\partial }{\partial\ddot{x}_a^i}
-\ddot{x}_{\dot{r}}^i
\frac{\partial }
{\partial\ddot{x}_{\dot{a}}^i}
+\ddot{x}_{\dot{b}}^a
\frac{\partial }
{\partial\ddot{x}_{\dot{b}}^r},\\
\chi_{tt}^r
=\frac{\partial }{\partial\ddot{x}^r}
-\ddot{x}_{\dot{r}}^i
\frac{\partial }{\partial\ddot{x}_t^i},\\
\chi_{ta}^r=\dot{x}^a
\left(
2\frac{\partial }{\partial\ddot{x}^r}
-\ddot{x}_{\dot{r}}^i
\frac{\partial }{\partial\ddot{x}_t^i}
\right)
+\ddot{x}^a
\frac{\partial }{\partial\ddot{x}_t^r}
-\ddot{x}_{\dot{r}}^i
\frac{\partial }{\partial\ddot{x}_a^i}
+2\frac{\partial }{\partial
\ddot{x}_{\dot{a}}^r},
\end{array}
\]
\begin{align*}
\chi_{a\leq b}^r
& =\tfrac{1}{1+\delta_{ab}}
\left\{
2\dot{x}^a\dot{x}^b
\tfrac{\partial }{\partial\ddot{x}^r}
+\ddot{x}^b
\tfrac{\partial }{\partial\ddot{x}_a^r}
+\ddot{x}^a
\tfrac{\partial }{\partial\ddot{x}_b^r}
\dot{x}^b\ddot{x}_{\dot{r}}^i
\tfrac{\partial }{\partial\ddot{x}_a^i}
-\dot{x}^a\ddot{x}_{\dot{r}}^i
\tfrac{\partial }{\partial \ddot{x}_b^i}
\right.  \\
& \left.
+2\dot{x}^b
\tfrac{\partial }
{\partial\ddot{x}_{\dot{a}}^r}
+2\dot{x}^a
\tfrac{\partial }
{\partial\ddot{x}_{\dot{b}}^r}
\right\} ,
\end{align*}
\[
\begin{array}
[c]{l}
\chi_{ttt}^r
=\frac{\partial }{\partial\ddot{x}_t^r},\\
\chi_{tta}^r
=2\dot{x}^a
\frac{\partial }{\partial\ddot{x}_t^r}
+\frac{\partial }{\partial\ddot{x}_a^r},\\
\chi_{t,a\leq b}^r
=\tfrac{2}{1+\delta_{ab}}
\left\{
\dot{x}^a\dot{x}^b
\frac{\partial }{\partial\ddot{x}_t^r}
+\dot{x}^b
\frac{\partial }{\partial\ddot{x}_a^r}
+\dot{x}^a
\frac{\partial }{\partial\ddot{x}_b^r}
\right\} ,\\
\chi_{a\leq b\leq c}^r
=\tfrac{2}{(1+\delta _{ab}+\delta _{bc})!}
\left\{
\dot{x}^b\dot{x}^c
\frac{\partial }{\partial\ddot{x}_a^r}
+\dot{x}^a\dot{x}^c
\frac{\partial }{\partial\ddot{x}_b^r}
+\dot{x}^a\dot{x}^b
\frac{\partial }{\partial\ddot{x}_{c}^r}
\right\} ,
\end{array}
\]
and $(t,x^i,\dot{x}^i,\ddot{x}^i,\ddot{x}_t^i,
\ddot{x}_a^i,\ddot{x}_{\dot{a}}^i)$ is the induced
coordinate system on $J^1(p^{21})$, namely
\[
\ddot{x}_t^i
\left(  j_\xi ^1\sigma
\right)
=\frac{\partial F^i}{\partial t}(\xi ),
\quad
\ddot{x}_a^i
\left(
j_\xi ^1\sigma
\right)
=\frac{\partial F^i}{\partial x^a}(\xi ),
\quad
\ddot{x}_{\dot{a}}^i
\left(
j_\xi ^1\sigma
\right)
=\frac{\partial F^i}{\partial\dot{x}^a}(\xi ).
\]
Accordingly, $\chi_{00}^r$, $\chi_t^r$,
$\chi_a^r$, $\chi_{tt}^r$, $\chi_{ta}^r$,
$\chi_{a\leq b}^r$, $\chi_{ttt}^r$, $
\chi_{tta}^r$, $\chi_{ta\leq b}^r$,
$\chi_{a\leq b\leq c}^r$ constitute
a system of generators for the distribution
$\mathcal{D}^{(1)}$. From the expressions
above for $\chi_{00}^r$, $\chi_{ttt}^r$,
$\chi_t^r$, $\chi_{tt}^r$, $\chi_{tta}^r$,
$\chi_{ta}^r$ we obtain
\[
\begin{array}
[c]{lll}
\frac{\partial }{\partial x^r}
=\chi_{00}^r,
& \frac{\partial }{\partial \dot{x}^r}
=\chi_t^r+\ddot{x}_r^i\chi_{ttt}^i,
& \frac{\partial }{\partial\ddot{x}_t^r}
=\chi_{ttt}^r,\\
\frac{\partial }{\partial\ddot{x}^r}
=\chi_{tt}^r
+\ddot{x}_{\dot{r}}^i\chi_{ttt}^i,
& \frac{\partial }{\partial\ddot{x}_a^r}
=\chi_{tta}^r-2\dot{x}^a\chi_{ttt}^r,
&
\end{array}
\]
\[
\begin{array}
[c]{l}
2\frac{\partial }{\partial\ddot{x}_{\dot{a}}^r}
=\chi_{ta}^r-2\dot{x}^a\chi_{tt}^r
-3\dot{x}^a\ddot{x}_{\dot{r}}^i\chi_{ttt}^i
-\ddot{x}^a\chi_{ttt}^r
+\ddot{x}_{\dot{r}}^i\chi_{tta}^i.
\end{array}
\]
As in the previous case, the first-order
differential invariants are none other than
the functions in
$((p^{21})^1)^\ast (p^1)^\ast C^\infty (\mathbb{R})$.

\begin{lemma}
\label{rank}The rank of $\mathcal{D}^{(2)}$ is $11$
if $\dim M=1$ and $\tfrac{1}{2}n(3n^2+11n+10)$
if $\dim M=n>1$.
\end{lemma}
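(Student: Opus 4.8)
The plan is to write out a system of generators of $\mathcal{D}^{(2)}$ in the natural jet coordinates of $J^2(p^{21})$ and to compute the dimension of its span at a generic point by Gaussian elimination, exactly as was done above for $\mathcal{D}^{(0)}$ and $\mathcal{D}^{(1)}$.

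First I would obtain the prolongation $(X^{(2)})^{(2)}$ on $J^2(p^{21})$ of the pair $(X^{(2)},X^{(1)})$ associated with an arbitrary $p$-vertical vector field $X=u^i\partial/\partial x^i$ on $M^0$, by applying the prolongation recursion (cf.\ \cite{Kumpera}, \cite{MM}) once more to the field $(X^{(2)})^{(1)}$ computed above. Besides the coordinates of $J^1(p^{21})$, the manifold $J^2(p^{21})$ carries second-order fibre coordinates, namely the second partial derivatives of $F^i$ with respect to $t,x^a,\dot x^a$, and the coefficient in $(X^{(2)})^{(2)}$ of each new basis field dual to one of these is a linear combination of partial derivatives of the $u^i$ of order up to $4$. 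Collecting terms, one writes
\[
(X^{(2)})^{(2)}=\sum_{r}\ \sum_{|\alpha|\leq 4}\bigl(\partial^\alpha u^r\bigr)\chi^r_\alpha ,
\]
where $\alpha$ runs over the multi-indices in the variables $(t,x^1,\dots,x^n)$ of order at most $4$; the vector fields $\chi^r_\alpha$ are explicit, and for $|\alpha|\leq 3$ they project, under $J^2(p^{21})\to J^1(p^{21})$, onto the fields $\chi^r_{00},\chi^r_t,\chi^r_a,\chi^r_{tt},\chi^r_{ta},\chi^r_{a\leq b},\chi^r_{ttt},\chi^r_{tta},\chi^r_{t,a\leq b},\chi^r_{a\leq b\leq c}$ already displayed. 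Since the functions $u^i\in C^\infty(M^0)$ can be chosen with arbitrarily prescribed jets at any point of $M^0$, the $\chi^r_\alpha$ form, at each $\zeta\in J^2(p^{21})$, a system of generators of $\mathcal{D}^{(2)}_\zeta$, so that $\operatorname{rank}\mathcal{D}^{(2)}$ equals the rank of this system.

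Next I would compute that rank by elimination, descending in the order of the $u$-derivatives. The generators of top order ($|\alpha|=4$) involve only the basis fields dual to the second-order fibre coordinates, with coefficients polynomial in $\dot x$ of Vandermonde type, and combining them one spans all of those directions; together with the generators of order $|\alpha|=3$ one recovers the basis fields dual to the first-order fibre coordinates and to $F^i$ itself, and with a few generators of order $|\alpha|=2$ also the base fields $\partial/\partial x^r$ and $\partial/\partial\dot x^r$, exactly as in the treatment of $\mathcal{D}^{(1)}$. What remains is to analyse the finitely many low-order generators $\chi^r_{00},\chi^r_t,\chi^r_a,\chi^r_{tt},\chi^r_{ta},\chi^r_{a\leq b}$ modulo the coordinate subspace already produced; a careful count of which of these stay linearly independent gives the asserted value $\tfrac{1}{2}n(3n^2+11n+10)$. (Since $\mathcal{D}^{(2)}$ is involutive—prolongation is a Lie-algebra homomorphism and $[X,Y]$ is $p$-vertical whenever $X$ and $Y$ are—this is equivalent, for $\dim M>1$, to saying that the first integrals of $\mathcal{D}^{(2)}$ form a space of the complementary functional dimension $\tfrac{1}{2}n^2(n-1)+1$, in agreement with the subsequent statement that second-order invariants are functions of $t$ and of the curvature attached to $\sigma$.)

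Finally, the case $\dim M=1$ must be singled out: when $n=1$ all symmetrizations over repeated lower indices become vacuous and several extra cancellations occur among the generators (for instance, in $\chi^r_a$ the terms involving $\ddot x^i_r$ and $\ddot x^i_{\dot r}$ cancel out), with the effect that the span of the residual low-order generators is one dimension smaller than the general formula predicts, giving $11$ instead of $12$. The main obstacle is precisely the bookkeeping of this elimination: one must organise the many generators by order, verify that the high-order ones sweep out all the claimed fibre directions, and—the delicate point—pin down the relations among the lowest-order generators modulo that subspace, for it is there that the difference between $\dim M=1$ and $\dim M>1$ is located.
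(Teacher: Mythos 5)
Your overall strategy is the paper's own: prolong twice, write $(X^{(2)})^{(2)}=\sum_{|\alpha|\leq 4}(\partial^\alpha u^r)\varkappa^r_\alpha$, and compute the rank of the span of the $\varkappa^r_\alpha$ at a generic point. But the proposal stops exactly where the lemma begins, and the partial description you give of the elimination is not compatible with the answer. You assert that the $|\alpha|=4$ generators span \emph{all} directions dual to the second-order fibre coordinates, that the $|\alpha|=3$ ones then yield the first-order fibre directions and $\partial/\partial\ddot{x}^r$, and that a few $|\alpha|=2$ generators give $\partial/\partial x^r$, $\partial/\partial\dot{x}^r$. If that were so, the rank would be $\dim J^2(p^{21})-1=2n^3+5n^2+5n$, strictly larger than the claimed $\tfrac12 n(3n^2+11n+10)$, and the only invariant would be $t$ --- contradicting the curvature invariants constructed afterwards, which depend on $\ddot{x}^i_{t\dot a}$, $\ddot{x}^i_{a\dot b}$, $\ddot{x}^i_{\dot a\le\dot b}$. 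The actual computation shows the opposite: the top-order generators reduce to $\varkappa^r_{tttt},\varkappa^r_{ttta},\varkappa^r_{tt,a\le b}$ and span only $\partial/\partial\ddot{x}^r_{tt}$, $\partial/\partial\ddot{x}^r_{ta}$, $\partial/\partial\ddot{x}^r_{a\le b}$; the directions involving $\dot{x}$-derivatives of $F$ enter only in fixed combinations inside $\varkappa^r_{tta}$, $\varkappa^r_{t,a\le b}$, $\varkappa^r_{a\le b}$, $\varkappa^r_a$, $\varkappa^r_t$, $\varkappa^r_{tt}$, and are never separately attained. Establishing precisely this --- namely the dependence relations expressing $\varkappa^r_{a\le b\le c}$, $\varkappa^r_{t,a\le b\le c}$, $\varkappa^r_{a\le b\le c\le d}$ through the remaining generators, and the independence of the surviving $5n+4n^2+\tfrac32 n^2(n+1)$ fields for $n\ge 2$ --- is the whole content of the lemma, and it is neither carried out nor correctly anticipated in your sketch; the value $\tfrac12 n(3n^2+11n+10)$ is quoted, not derived.

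The case $n=1$ has the same defect. The cancellations you point out in $\chi^r_a$ occur at the level of $J^1(p^{21})$ and by themselves prove nothing about $\mathcal{D}^{(2)}$; what must be shown is that, when $n=1$, the second-order prolonged field $\varkappa_1$ lies in the span of the other eleven generators $\varkappa_{00},\varkappa_t,\varkappa_{tt},\varkappa_{t1},\varkappa_{11},\varkappa_{ttt},\varkappa_{tt1},\varkappa_{t,11},\varkappa_{tttt},\varkappa_{ttt1},\varkappa_{tt,11}$, which is the single extra relation accounting for $11$ instead of $12$. Asserting that ``the span is one dimension smaller'' is a restatement of the claim, not an argument. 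To repair the proof you would have to exhibit the explicit reduced expressions of the generators (as the paper does), read off which coordinate directions are attained modulo the previously spanned ones, and verify both the dependence relations and the final count, including the $n=1$ degeneration.
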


\begin{proof}
By computing the second jet prolongation,
the following formulas are obtained:
\begin{align*}
(X^{(2)})^{(2)}
& =u^i\tfrac{\partial }{\partial x^i}
+v^i
\tfrac{\partial }{\partial\dot{x}^i}
+w^i
\tfrac{\partial }{\partial\ddot{x}^i}
+w_t^i
\tfrac{\partial }{\partial\ddot{x}_t^i}
+w_a^i
\tfrac{\partial }{\partial\ddot{x}_a^i}
+w_{\dot{a}}^i
\tfrac{\partial }{\partial\ddot{x}_{\dot{a}}^i}
+w_{tt}^i
\tfrac{\partial }{\partial\ddot{x}_{tt}^i}\\
& +w_{ta}^i
\tfrac{\partial }{\partial\ddot{x}_{ta}^i}
+w_{t\dot{a}}^i
\tfrac{\partial }{\partial\ddot{x}_{t\dot{a}}^i}
+w_{a\leq b}^i
\tfrac{\partial }{\partial\ddot{x}_{a\leq b}^i}
+w_{a\dot{b}}^i
\tfrac{\partial }{\partial\ddot{x}_{a\dot{b}}^i}
+w_{\dot{a}\leq\dot{b}}^i
\tfrac{\partial }
{\partial\ddot{x}_{\dot{a}\leq\dot{b}}^i},
\end{align*}
\begin{align}
w_{tt}^i
& =\tfrac{\partial ^4u^i}{\partial t^4}
+2\tfrac{\partial ^{4}u^i}
{\partial t^3\partial x^h}
\dot{x}^h
+\tfrac{\partial ^4u^i}
{\partial t^2\partial x^h\partial x^k}
\dot{x}^h\dot{x}^k
-\tfrac{\partial ^3u^r}{\partial t^3}
\ddot{x}_{\dot{r}}^i
+\tfrac{\partial ^3u^i}
{\partial t^2\partial x^h}
\ddot{x}^h
-\tfrac{\partial ^3u^r}
{\partial t^2\partial x^h}
\dot{x}^h\ddot{x}_{\dot{r}}^i
\nonumber\\
& -\tfrac{\partial^2u^r}{\partial t^2}
\ddot{x}_r^i
-2\tfrac{\partial^2u^r}
{\partial t^2}
\ddot{x}_{t\dot{r}}^i
+2\tfrac{\partial^2u^i}
{\partial t\partial x^h}
\ddot{x}_t^h
-2\tfrac{\partial^2u^r}
{\partial t\partial x^h}
\dot{x}^h\ddot{x}_{t\dot{r}}^i
-2\tfrac{\partial u^r}{\partial t}
\ddot{x}_{tr}^i
+\tfrac{\partial u^i}{\partial x^h}
\ddot{x}_{tt}^h,
\nonumber
\end{align}
\begin{align}
w_{ta}^i
&  =\tfrac{\partial ^4u^i}
{\partial t^3\partial x^a}
+2\tfrac{\partial ^4u^i}
{\partial t^2\partial x^a\partial x^h}
\dot{x}^h
+\tfrac{\partial ^4u^i}
{\partial t\partial x^h\partial x^k\partial x^a}
\dot{x}^h\dot{x}^k
-\tfrac{\partial ^3u^r}
{\partial t^2\partial x^a}
\ddot{x}_{\dot{r}}^i
+\tfrac{\partial ^3u^i}
{\partial t\partial x^a\partial x^h}
\ddot{x}^h\nonumber\\
& -\tfrac{\partial ^3u^r}
{\partial t\partial x^h\partial x^a}
\dot{x}^h\ddot{x}_{\dot{r}}^i
-\tfrac{\partial^2u^r}{\partial t^2}
\ddot{x}_{a\dot{r}}^i
-\tfrac{\partial^2u^r}
{\partial t\partial x^a}\ddot{x}_r^i
+\tfrac{\partial^2u^i}
{\partial t\partial x^h}
\ddot{x}_a^h
-\tfrac{\partial^2u^r}
{\partial t\partial x^a}
\ddot{x}_{t\dot{r}}^i
\nonumber\\
& -\tfrac{\partial^2u^r}
{\partial t\partial x^h}
\dot{x}^h\ddot{x}_{a\dot{r}}^i
+\tfrac{\partial^2u^i}
{\partial x^h\partial x^a}
\ddot{x}_t^h
-\tfrac{\partial^2u^r}
{\partial x^a\partial x^h}
\dot{x}^h\ddot{x}_{t\dot{r}}^i
-\tfrac{\partial u^r}{\partial t}
\ddot{x}_{ar}^i
-\tfrac{\partial u^r}{\partial x^a}
\ddot{x}_{tr}^i
+\tfrac{\partial u^i}{\partial x^h}
\ddot{x}_{ta}^h,\nonumber
\end{align}
\begin{align}
w_{t\dot{a}}^i
& =2\tfrac{\partial ^3u^i}
{\partial t^2\partial x^a}
+2\tfrac{\partial ^3u^i}
{\partial t\partial x^a\partial x^h}
\dot{x}^h
-\tfrac{\partial^2u^r}{\partial t^2}
\ddot{x}_{\dot{a}\dot{r}}^i
+\tfrac{\partial^2u^i}
{\partial t\partial x^h}
\ddot{x}_{\dot{a}}^h
-\tfrac{\partial^2u^r}
{\partial t\partial x^a}
\ddot{x}_{\dot{r}}^i
\label{w_t_apunto}\\
& -\tfrac{\partial^2u^r}
{\partial t\partial x^h}\dot{x}^h
\ddot{x}_{\dot{a}\dot{r}}^i
-\tfrac{\partial u^r}{\partial t}
\ddot{x}_{\dot{a}r}^i
+\tfrac{\partial u^i}{\partial x^h}
\ddot{x}_{t\dot{a}}^h
-\tfrac{\partial u^r}{\partial x^a}
\ddot{x}_{t\dot{r}}^i,
\nonumber
\end{align}
\begin{align}
w_{a\leq b}^i
& =\tfrac{\partial ^4u^i}
{\partial t^2\partial x^a\partial x^b}
+2\tfrac{\partial ^4u^i}
{\partial t\partial x^a\partial x^h\partial x^b}
\dot{x}^h
+\tfrac{\partial ^4u^i}
{\partial x^a\partial x^b\partial x^h\partial x^k}
\dot{x}^h\dot{x}^k
\tfrac{\partial ^3u^r}
{\partial t\partial x^a\partial x^b}
\ddot{x}_{\dot{r}}^i
\nonumber \\
& +\tfrac{\partial ^3u^i}
{\partial x^a\partial x^b\partial x^h}
\ddot{x}^h-\tfrac{\partial ^3u^r}
{\partial x^a\partial x^b\partial x^h}
\dot{x}^h\ddot{x}_{\dot{r}}^i
-\tfrac{\partial^2u^r}
{\partial t\partial x^a}
\ddot{x}_{\dot{r}b}^i
-\tfrac{\partial^2u^r}
{\partial t\partial x^b}
\ddot{x}_{a\dot{r}}^i
\nonumber\\
& +\tfrac{\partial^2u^i}
{\partial x^h\partial x^b}
\ddot{x}_a^h
+\tfrac{\partial^2u^i}
{\partial x^a\partial x^h}
\ddot{x}_b^h
-\tfrac{\partial^2u^r}
{\partial x^a\partial x^b}
\ddot{x}_r^i
-\tfrac{\partial^2u^r}
{\partial x^a\partial x^h}
\dot{x}^h\ddot{x}_{\dot{r}b}^i
\nonumber\\
& -\tfrac{\partial^2u^r}
{\partial x^b\partial x^h}
\dot{x}^h\ddot{x}_{a\dot{r}}^i
-\tfrac{\partial u^r}
{\partial x^b}
\ddot{x}_{ar}^i
-\tfrac{\partial u^r}{\partial x^a}
\ddot{x}_{rb}^i
+\tfrac{\partial u^i}{\partial x^h}
\ddot{x}_{a\leq b}^h,
\nonumber
\end{align}
\begin{align}
w_{a\dot{b}}^i
& =2\tfrac{\partial ^3u^i}
{\partial t\partial x^a\partial x^b}
+2\tfrac{\partial ^3u^i}
{\partial x^a\partial x^b\partial x^k}
\dot{x}^k
-\tfrac{\partial^2u^r}
{\partial t\partial x^a}
\ddot{x}_{\dot{r}\dot{b}}^i
-\tfrac{\partial^2u^r}
{\partial x^a\partial x^b}
\ddot{x}_{\dot{r}}^i
+\tfrac{\partial^2u^i}
{\partial x^a\partial x^h}
\ddot{x}_{\dot{b}}^h\label{w_a_bpunto}\\
& -\tfrac{\partial^2u^r}
{\partial x^a\partial x^h}
\dot{x}^h\ddot{x}_{\dot{r}\dot{b}}^i
+\tfrac{\partial u^i}{\partial x^h}
\ddot{x}_{a\dot{b}}^h
-\tfrac{\partial u^r}{\partial x^a}
\ddot{x}_{r\dot{b}}^i
-\tfrac{\partial u^r}{\partial x^b}
\ddot{x}_{a\dot{r}}^i,
\nonumber
\end{align}
\begin{equation}
w_{\dot{a}\leq\dot{b}}^i
=2\tfrac{\partial^2u^i}
{\partial x^a\partial x^b}
+\tfrac{\partial u^i}{\partial x^h}
\ddot{x}_{\dot{a}\leq\dot{b}}^h
-\tfrac{\partial u^r}{\partial x^a}
\ddot{x}_{\dot{b}\dot{r}}^i
-\tfrac{\partial u^r}{\partial x^b}
\ddot{x}_{\dot{a}\dot{r}}^i,
\label{w_apunto_bpunto}
\end{equation}
where we assume $\ddot{x}_{ba}^i
=\ddot{x}_{a\leq b}^i$ for $a\leq b$
and $(t$, $x^i$, $\dot{x}^i$,
$\ddot{x}^i$, $\ddot{x}_t^i$,
$\ddot{x}_a^i$, $\ddot{x}_{\dot{a}}^i$,
$\ddot{x}_{tt}^i$, $\ddot{x}_{ta}^i$,
$\ddot{x}_{t\dot{a}}^i$,
$\ddot{x}_{a\leq b}^i$,
$\ddot{x}_{a\dot{b}}^i$,
$\ddot{x}_{\dot{a}\leq\dot{b}}^i)$
is the induced coordinate system
on $J^2(p^{21})$. Hence
\begin{multline*}
(X^{(2)})^{(2)}
=u^r\varkappa_{00}^r
+\tfrac{\partial u^r}{\partial t}
\varkappa_t^r
+\tfrac{\partial u^r}{\partial x^a}
\varkappa_a^r
+\tfrac{\partial^2u^r}{\partial t^2}
\varkappa_{tt}^r
+\tfrac{\partial^2u^r}
{\partial t\partial x^a}
\varkappa_{ta}^r
+\sum _{a\leq b}
\tfrac{\partial^2u^r}
{\partial x^a\partial x^b}
\varkappa_{a\leq b}^r\\
+\tfrac{\partial ^3u^r}{\partial t^3}
\varkappa_{ttt}^r
+\tfrac{\partial ^3u^r}
{\partial t^2\partial x^a}
\varkappa_{tta}^r
+\sum _{a\leq b}\tfrac{\partial ^3u^r}
{\partial t\partial x^a\partial x^b}
\varkappa_{t,a\leq b}^r\\
+\sum _{a\leq b\leq c}
\tfrac{\partial ^3u^r}
{\partial x^a\partial x^b\partial x^c}
\varkappa_{a\leq b\leq c}^r
+\tfrac{\partial ^4u^r}{\partial t^4}
\varkappa_{tttt}^r
+\tfrac{\partial ^4u^r}
{\partial t^3\partial x^a}
\varkappa _{ttta}^r
+\sum _{a\leq b}
\tfrac{\partial ^4u^r}
{\partial t^2\partial x^a\partial x^b}
\varkappa _{tt,a\leq b}^r\\
+\sum _{a\leq b\leq c}
\tfrac{\partial ^4u^r}
{\partial t\partial x^a\partial x^b\partial x^c}
\varkappa _{t,a\leq b\leq c}^r
+\sum _{a\leq b\leq c\leq d}
\tfrac{\partial ^4u^r}
{\partial x^a\partial x^b\partial x^c\partial x^d}
\varkappa_{a\leq b\leq c\leq d}^r,
\end{multline*}
where
\[
\begin{array}
[c]{rl}
\varkappa_{00}^r
= & \!\!\!
\frac{\partial }{\partial x^r},\\
\varkappa_t^r
= & \!\!\!
\frac{\partial }{\partial\dot{x}^r}
-\ddot{x}_r^i
\frac{\partial }{\partial\ddot{x}_t^i}
-2\ddot{x}_{tr}^i
\frac{\partial }{\partial\ddot{x}_{tt}^i}
-\ddot{x}_{ar}^i
\frac{\partial }{\partial\ddot{x}_{ta}^i}
-\ddot{x}_{r\dot{a}}^i
\frac{\partial }
{\partial\ddot{x}_{t\dot{a}}^i},
\end{array}
\]
\begin{multline*}
\varkappa_a^r
=\dot{x}^a
\tfrac{\partial }{\partial\dot{x}^r}
+\ddot{x}^a
\tfrac{\partial }{\partial\ddot{x}^r}
+\ddot{x}_t^a
\tfrac{\partial }{\partial\ddot{x}_t^r}
+\ddot{x}_b^a
\tfrac{\partial }{\partial\ddot{x}_b^r}
+\ddot{x}_{\dot{b}}^a
\tfrac{\partial }
{\partial\ddot{x}_{\dot{b}}^r}
-\ddot{x}_r^i
\tfrac{\partial }{\partial\ddot{x}_a^i}
-\ddot{x}_{\dot{r}}^i
\tfrac{\partial }
{\partial\ddot{x}_{\dot{a}}^i}
+\ddot{x}_{tt}^a
\tfrac{\partial }
{\partial\ddot{x}_{tt}^r}\\
-\ddot{x}_{tr}^i
\tfrac{\partial }
{\partial\ddot{x}_{ta}^i}
+\ddot{x}_{tb}^a
\tfrac{\partial }
{\partial\ddot{x}_{tb}^r}
+\ddot{x}_{t\dot{b}}^a
\tfrac{\partial }
{\partial\ddot{x}_{t\dot{b}}^r}
-\ddot{x}_{t\dot{r}}^i
\tfrac{\partial }
{\partial\ddot{x}_{t\dot{a}}^i}
-\sum _{c\leq a}\ddot{x}_{cr}^i
\tfrac{\partial }
{\partial\ddot{x}_{c\leq a}^i}
-\sum _{a\leq b}\ddot{x}_{rb}^i
\tfrac{\partial }
{\partial \ddot{x}_{a\leq b}^i}\\
+\sum _{c\leq b}\ddot{x}_{cb}^a
\tfrac{\partial }
{\partial \ddot{x}_{c\leq b}^r}
+\ddot{x}_{c\dot{b}}^a
\tfrac{\partial }
{\partial \ddot{x}_{c\dot{b}}^r}
-\ddot{x}_{r\dot{b}}^i
\tfrac{\partial }
{\partial \ddot{x}_{a\dot{b}}^i}
-\ddot{x}_{c\dot{r}}^i
\tfrac{\partial }
{\partial \ddot{x}_{c\dot{a}}^i}
+\sum _{c\leq b}
\ddot{x}_{\dot{c}\leq\dot{b}}^a
\tfrac{\partial }
{\partial\ddot{x}_{\dot{c}\leq \dot{b}}^r}\\
-\sum _{a\leq b}
\ddot{x}_{\dot{b}\dot{r}}^i
\tfrac{\partial }
{\partial\ddot{x}_{\dot{a}\leq \dot{b}}^i}
-\sum _{b\leq a}
\ddot{x}_{\dot{b}\dot{r}}^i
\tfrac{\partial }
{\partial\ddot{x}_{\dot{b}\leq \dot{a}}^i},
\end{multline*}
\[
\begin{array}
[c]{l}
\varkappa _{tt}^r
=\frac{\partial }{\partial\ddot{x}^r}
-\ddot{x}_{\dot{r}}^i
\frac{\partial }{\partial\ddot{x}_t^i}
-\ddot{x}_r^i
\frac{\partial }{\partial\ddot{x}_{tt}^i}
-2\ddot{x}_{t\dot{r}}^i
\frac{\partial }{\partial\ddot{x}_{tt}^i}
-\ddot{x}_{a\dot{r}}^i
\frac{\partial }{\partial\ddot{x}_{ta}^i}
-\ddot{x}_{\dot{r}\dot{a}}^i
\frac{\partial }
{\partial \ddot{x}_{t\dot{a}}^i},
\end{array}
\]
\begin{multline*}
\varkappa_{ta}^r=\dot{x}^a
\left(
2\tfrac{\partial }{\partial\ddot{x}^r}
-\ddot{x}_{\dot{r}}^i
\tfrac{\partial }{\partial\ddot{x}_t^i}
\right)
+\ddot{x}^a
\tfrac{\partial }{\partial\ddot{x}_t^r}
-\ddot{x}_{\dot{r}}^i
\tfrac{\partial }{\partial\ddot{x}_a^i}
+2\tfrac{\partial }
{\partial \ddot{x}_{\dot{a}}^r}
+2\ddot{x}_t^a
\tfrac{\partial }{\partial\ddot
{x}_{tt}^r}-2\dot{x}^a\ddot{x}_{t\dot{r}}^i
\tfrac{\partial }{\partial\ddot{x}_{tt}^i}\\
-\ddot{x}_r^i
\tfrac{\partial }{\partial\ddot{x}_{ta}^i}
+\ddot{x}_b^a
\tfrac{\partial }{\partial\ddot{x}_{tb}^r}
-\ddot{x}_{t\dot{r}}^i
\tfrac{\partial }{\partial\ddot{x}_{ta}^i}
-\dot{x}^a\ddot{x}_{b\dot{r}}^i
\tfrac{\partial }{\partial\ddot{x}_{tb}^i}
+\ddot{x}_{\dot{b}}^a
\tfrac{\partial }{\partial\ddot{x}_{t\dot{b}}^r}
-\ddot{x}_{\dot{r}}^i
\tfrac{\partial }{\partial\ddot{x}_{t\dot{a}}^i}\\
-\dot{x}^a\ddot{x}_{\dot{r}\dot{b}}^i
\tfrac{\partial }{\partial\ddot{x}_{t\dot{b}}^i}
-\ddot{x}_{\dot{r}b}^i
\tfrac{\partial }{\partial\ddot{x}_{a\leq b}^i}
-\ddot{x}_{b\dot{r}}^i
\tfrac{\partial }{\partial\ddot{x}_{b\leq a}^i}
-\ddot{x}_{\dot{r}\dot{b}}^i
\tfrac{\partial }{\partial\ddot{x}_{a\dot{b}}^i},
\end{multline*}
\begin{align*}
\varkappa_{a\leq b}^r
& =\tfrac{1}{1+\delta_{ab}}
\left\{
2\dot{x}^a\dot{x}^b
\tfrac{\partial }{\partial\ddot{x}^r}
+\ddot{x}^b
\tfrac{\partial }{\partial\ddot{x}_a^r}
+\ddot{x}^a
\tfrac{\partial }{\partial\ddot{x}_b^r}
-\dot{x}^b\ddot{x}_{\dot{r}}^k
\tfrac{\partial }{\partial\ddot{x}_a^k}
-\dot{x}^a\ddot{x}_{\dot{r}}^k
\tfrac{\partial }{\partial \ddot{x}_b^k}
+2\dot{x}^b
\tfrac{\partial }{\partial\ddot{x}_{\dot{a}}^r}
\right.  \\
& +2\dot{x}^a
\tfrac{\partial }{\partial\ddot{x}_{\dot{b}}^r}
+\ddot{x}_t^b
\tfrac{\partial }{\partial\ddot{x}_{ta}^r}
+\ddot{x}_t^a
\tfrac{\partial }{\partial\ddot{x}_{tb}^r}
-\dot{x}^b\ddot{x}_{t\dot{r}}^i
\tfrac{\partial }{\partial\ddot{x}_{ta}^i}
-\dot{x}^a\ddot{x}_{t\dot{r}}^i
\tfrac{\partial }{\partial\ddot{x}_{tb}^i}
+\ddot{x}_{c}^a
\tfrac{\partial }{\partial\ddot{x}_{c\leq b}^r}\\
& +\ddot{x}_{c}^b
\tfrac{\partial }{\partial\ddot{x}_{c\leq a}^r}
+\ddot{x}_{c}^b
\tfrac{\partial }{\partial\ddot{x}_{a\leq c}^r}
+\ddot{x}_{c}^a
\tfrac{\partial }{\partial\ddot{x}_{b\leq c}^r}
-\dot{x}^b\ddot{x}_{\dot{r}c}^i
\tfrac{\partial }{\partial\ddot{x}_{a\leq c}^i}
-\dot{x}^a\ddot{x}_{\dot{r}c}^i
\tfrac{\partial }{\partial\ddot{x}_{b\leq c}^i}\\
& -\dot{x}^a\ddot{x}_{c\dot{r}}^i
\tfrac{\partial }{\partial\ddot{x}_{c\leq b}^i}
-\dot{x}^b\ddot{x}_{c\dot{r}}^i
\tfrac{\partial }{\partial\ddot{x}_{c\leq a}^i}
-\ddot{x}_{\dot{r}}^i
\tfrac{\partial }{\partial\ddot{x}_{a\dot{b}}^i}
-\ddot{x}_{\dot{r}}^i
\tfrac{\partial }{\partial\ddot
{x}_{b\dot{a}}^i}+\ddot{x}_{\dot{c}}^b
\tfrac{\partial }{\partial\ddot{x}_{a\dot{c}}^r}\\
& \left.
+\ddot{x}_{\dot{c}}^a
\tfrac{\partial }{\partial\ddot{x}_{b\dot{c}}^r}
-\dot{x}^b\ddot{x}_{\dot{r}\dot{c}}^i
\tfrac{\partial }{\partial \ddot{x}_{a\dot{c}}^i}
-\dot{x}^a\ddot{x}_{\dot{r}\dot{c}}^i
\tfrac{\partial }{\partial\ddot{x}_{b\dot{c}}^i}
\right\}
-\ddot{x}_r^i
\tfrac{\partial }{\partial\ddot{x}_{a\leq b}^i}
+2\tfrac{\partial }
{\partial\ddot{x}_{\dot{a}\leq\dot{b}}^r},
\end{align*}
\[
\begin{array}
[c]{l}
\varkappa_{ttt}^r
=\frac{\partial }
{\partial\ddot{x}_t^r}
-\ddot{x}_{\dot{r}}^i
\frac{\partial }
{\partial\ddot{x}_{tt}^i},\\
\varkappa_{tta}^r=2\dot{x}^a
\frac{\partial }
{\partial\ddot{x}_t^r}
+\frac{\partial }
{\partial\ddot{x}_a^r}
+\ddot{x}^a
\frac{\partial }
{\partial\ddot{x}_{tt}^r}
-\dot{x}^a\ddot{x}_{\dot{r}}^i
\frac{\partial }
{\partial\ddot{x}_{tt}^i}
-\ddot{x}_{\dot{r}}^i
\frac{\partial }
{\partial\ddot{x}_{ta}^i}
+2\frac{\partial }
{\partial\ddot{x}_{t\dot{a}}^r},
\end{array}
\]
\begin{align*}
\varkappa _{t,a\leq b}^r
& =\tfrac{1}{1+\delta _{ab}}
\left\{
2\dot{x}^a\dot{x}^b
\tfrac{\partial }
{\partial\ddot{x}_t^r}
+2\dot{x}^b
\tfrac{\partial }
{\partial\ddot{x}_a^r}
+2\dot{x}^a
\tfrac{\partial }
{\partial\ddot{x}_b^r}
+\ddot{x}^b
\tfrac{\partial }
{\partial\ddot{x}_{ta}^r}
+\ddot{x}^a
\tfrac{\partial }
{\partial\ddot{x}_{tb}^r}
-\dot{x}^b\ddot{x}_{\dot{r}}^i
\tfrac{\partial }
{\partial\ddot{x}_{ta}^i}
\right.
\\
& \left.
-\dot{x}^a\ddot{x}_{\dot{r}}^i
\tfrac{\partial }
{\partial\ddot{x}_{tb}^i}
+2\dot{x}^b
\tfrac{\partial }
{\partial\ddot{x}_{t\dot{a}}^r}
+2\dot{x}^a
\tfrac{\partial }
{\partial\ddot{x}_{t\dot{b}}^r}
+2\tfrac{\partial }
{\partial\ddot{x}_{a\dot{b}}^r}
+2\tfrac{\partial }
{\partial\ddot{x}_{b\dot{a}}^r}
\right\}
-\sum _{a\leq b}
\ddot{x}_{\dot{r}}^i
\tfrac{\partial }
{\partial\ddot{x}_{a\leq b}^i},
\end{align*}
\begin{align*}
\varkappa_{tttt}^r
& =\tfrac{\partial }
{\partial\ddot{x}_{tt}^r},\\
\varkappa_{ttta}^r
& =2\dot{x}^a\tfrac{\partial }
{\partial \ddot{x}_{tt}^r}
+\tfrac{\partial }
{\partial \ddot{x}_{ta}^r},\\
\varkappa _{tt,a\leq b}^r
& =\tfrac{1}{1+\delta _{ab}}
\left(
2\dot{x}^a\dot{x}^b
\tfrac{\partial }
{\partial\ddot{x}_{tt}^r}
+2\dot{x}^b
\tfrac{\partial }
{\partial\ddot{x}_{ta}^r}
+2\dot{x}^a
\tfrac{\partial }
{\partial\ddot{x}_{tb}^r}
\right)
+\tfrac{\partial }
{\partial\ddot{x}_{a\leq b}^r},
\end{align*}
\begin{align*}
\sum _{a\leq b\leq c}
\tfrac{\partial ^3u^r}
{\partial x^a\partial x^b\partial x^c}
\varkappa_{a\leq b\leq c}^r
& =\sum _{a\leq b}
\tfrac{\partial ^3u^r}
{\partial x^a\partial x^b\partial x^h}
\left(
\ddot{x}^h
\tfrac{\partial }
{\partial\ddot{x}_{a\leq b}^r}
-\dot{x}^h\ddot{x}_{\dot{r}}^i
\tfrac{\partial }
{\partial\ddot{x}_{a\leq b}^i}
\right)
\\
& +\tfrac{\partial ^3u^r}
{\partial x^a\partial x^b\partial x^c}
\left(
\dot{x}^b\dot{x}^c
\tfrac{\partial }
{\partial\ddot{x}_a^r}
+2\dot{x}^c
\tfrac{\partial }
{\partial\ddot{x}_{a\dot{b}}^r}
\right) ,
\end{align*}
\[
\sum _{a\leq b\leq c}
\tfrac{\partial ^4u^r}
{\partial t\partial x^a\partial x^b\partial x^c}
\varkappa_{t,a\leq b\leq c}^r
=\tfrac{\partial ^4u^r}
{\partial t\partial x^a\partial x^b\partial x^c}
\dot{x}^c\dot{x}^b
\tfrac{\partial }{\partial\ddot{x}_{ta}^r}
+2\sum _{a\leq b}
\tfrac{\partial ^4u^r}
{\partial t\partial x^a\partial x^b\partial x^c}
\dot{x}^c
\tfrac{\partial }
{\partial\ddot{x}_{a\leq b}^r},
\]
\[
\sum _{a\leq b\leq c\leq d}
\tfrac{\partial ^4u^r}
{\partial x^a\partial x^b\partial x^c\partial x^d}
\varkappa_{a\leq b\leq c\leq d}^r
=\sum _{a\leq b}
\tfrac{\partial ^4u^r}
{\partial x^a\partial x^b\partial x^c\partial x^d}
\dot{x}^c\dot{x}^d
\tfrac{\partial }{\partial\ddot{x}_{a\leq b}^r}.
\]
From the expressions for $\varkappa _{tttt}^r$,
$\varkappa _{ttta}^r$, $\varkappa _{tt,a\leq b}^r$
above we deduce
\[
\begin{array}
[c]{rl}
\frac{\partial }{\partial\ddot{x}_{ta}^r}
= & \!\!\!\varkappa _{ttta}^r
-2\dot{x}^a\varkappa _{tttt}^r,\\
\frac{\partial }{\partial\ddot{x}_{a\leq b}^r}
= & \!\!\!\varkappa _{tt,a\leq b}^r
-\frac{2}{1+\delta _{ab}}
\left(
\dot{x}^b\varkappa _{ttta}^r
+\dot{x}^a\varkappa_{tttb}^r
-3\dot{x}^a\dot{x}^b\varkappa _{tttt}^r
\right) .
\end{array}
\]
Hence the vector fields $\varkappa_{t,a\leq b\leq c}^r$
and $\varkappa _{a\leq b\leq c\leq d}^r$ can be written
as linear combinations of $\varkappa_{tttt}^r$,
$\varkappa_{ttta}^r$, and $\varkappa_{tt,a\leq b}^r$;
namely,
\[
\begin{array}
[c]{rl}
\varkappa _{t,a\leq b\leq c}^r
\equiv 0 & \!\!\!
\operatorname{mod}
\left\langle
\varkappa _{tttt}^r,
\varkappa _{ttta}^r,
\varkappa _{tttb}^r,
\varkappa _{tttc}^r,
\varkappa _{tt,a\leq b}^r,
\varkappa _{tt,a\leq c}^r,
\varkappa _{tt,b\leq c}^r
\right\rangle ,\\
\varkappa _{a\leq b\leq c\leq d}^r\
equiv 0 & \!\!\!
\operatorname{mod}
\left\langle
\varkappa _{tttt}^r,
\varkappa _{ttta}^r,
\varkappa _{tttb} ^r,
\varkappa _{tttc}^r,
\varkappa _{tttd}^r,
\varkappa _{tt,a\leq b}^r,
\varkappa _{tt,a\leq c}^r,
\varkappa _{tt,a\leq d}^r,
\right.  \\
& \qquad
\qquad
\qquad
\qquad
\qquad
\qquad
\qquad
\left.
\varkappa _{tt,b\leq c}^r,
\varkappa _{tt,b\leq d}^r,
\varkappa _{tt,c\leq d}^r
\right\rangle ,
\end{array}
\]
and, as a computation shows, the vector fields
$\varkappa _{a\leq b\leq c}^r$ can be written
as linear combinations of $\varkappa _{tta}^r$,
$\varkappa _{t,a\leq b}^r$, $\varkappa _{tttt}^r$,
$\varkappa _{ttta}^r$,
and $\varkappa _{tt,a\leq b}^r$; namely,
\[
\begin{array}
[c]{rl}
\varkappa _{a\leq b\leq c}^r
\equiv 0 & \!\!\!
\operatorname{mod}
\left\langle
\varkappa _{ttt}^r,
\varkappa _{tta}^r,
\varkappa _{ttb}^r,
\varkappa _{ttc}^r,
\varkappa _{t,a\leq b}^r,
\varkappa _{t,a\leq c}^r,
\varkappa _{t,b\leq c}^r,
\varkappa _{tttt}^i,
\varkappa _{ttta}^i,
\varkappa _{tttb}^i,
\right.  \\
& \qquad
\qquad
\qquad
\qquad
\qquad
\qquad
\qquad
\left.
\varkappa _{tttc}^i,
\varkappa _{tt,a\leq b}^i,
\varkappa _{tt,a\leq c}^i,
\varkappa _{tt,b\leq c}^i
\right\rangle .
\end{array}
\]

Moreover, from the previous formulas,
we obtain
\[
\!\!\!
\begin{array}
[c]{rl}
\varkappa _t^r
\!=\! & \!\!\!
\frac{\partial }{\partial\dot{x}^r}
\!-\!\ddot{x}_{r\dot{a}}^i
\frac{\partial }
{\partial\ddot{x}_{t\dot{a}}^i}
\!-\!\ddot{x}_r^i(\varkappa_{ttt}^i
\!+\!\ddot{x}_{\dot{i}}^k\varkappa_{tttt}^k)
\!-\!2\ddot{x}_{tr}^i\varkappa_{tttt}^i
\!-\!\ddot{x}_{ar}^i
\left(
\varkappa_{ttta}^i
\!-\!2\dot{x}^a\varkappa_{tttt}^i
\right)
\!,\smallskip \\
\varkappa_{tt}^r
\!=\! & \!\!\!
\tfrac{\partial }{\partial\ddot{x}^r}
\!-\!\ddot{x}_{\dot{r}\dot{b}}^i
\tfrac{\partial }
{\partial\ddot{x}_{t\dot{b}}^i}
\!-\!\ddot{x}_{\dot{r}}^i(\varkappa_{ttt}^i
\!+\!\ddot{x}_{\dot{i}}^k\varkappa_{tttt}^k)
\!-\!\left(
\ddot{x}_r^i\!+\!2\ddot{x}_{t\dot{r}}^i
\right)
\varkappa_{tttt}^i
\!-\!\ddot{x}_{b\dot{r}}^i
\left(
\varkappa_{tttb}^i
\!-\!2\dot{x}^b\varkappa_{tttt}^i
\right)
\!,
\end{array}
\]
\begin{align*}
\varkappa_a^r
\! & =\!
{\small \dot{x}}^a
\tfrac{\partial }{\partial\dot{x}^r}
{\small +\ddot{x}}^a
\tfrac{\partial }{\partial\ddot{x}^r}
+{\small \ddot{x}}_b^a
\tfrac{\partial }{\partial\ddot{x}_b^r}
+{\small \ddot{x}}_{\dot{b}}^a
\tfrac{\partial }
{\partial\ddot{x}_{\dot{b}}^r}
+{\small \ddot{x}}_{t\dot{b}}^a
\tfrac{\partial }
{\partial\ddot{x}_{t\dot{b}}^r}
+\ddot{x}_{c\dot{b}}^a
\tfrac{\partial }
{\partial\ddot{x}_{c\dot{b}}^r}
\!+\!\sum _{c\leq b}
\ddot{x}_{\dot{c}\leq\dot{b}}^a
\tfrac{\partial }
{\partial\ddot{x}_{\dot{c}\leq\dot{b}}^r}\\
& -\!\ddot{x}_r^i
\tfrac{\partial }{\partial\ddot{x}_a^i}
-\ddot{x}_{\dot{r}}^i
\tfrac{\partial }
{\partial\ddot{x}_{\dot{a}}^i}
-\ddot
{x}_{t\dot{r}}^i
\tfrac{\partial }
{\partial\ddot{x}_{t\dot{a}}^i}
-\ddot{x}_{r\dot{b}}^i
\tfrac{\partial }
{\partial\ddot{x}_{a\dot{b}}^i}
-\ddot{x}_{c\dot{r}}^i
\tfrac{\partial }
{\partial\ddot{x}_{c\dot{a}}^i}
\!-\!\sum _{a\leq b}
\ddot{x}_{\dot{b}\dot{r}}^i
\tfrac{\partial }
{\partial \ddot{x}_{\dot{a}\leq\dot{b}}^i}\\
& -\!\sum _{b\leq a}\ddot{x}_{\dot{b}\dot{r}}^i
\tfrac{\partial }
{\partial\ddot{x}_{\dot{b}\leq\dot{a}}^i}
\!+\!\sum _{c\leq b}\ddot{x}_{cb}^a
\left(
\!\varkappa_{tt,c\leq b}^r
\!-\!\tfrac{2}{1+\delta_{cb}}
\left(
\dot{x}^b\varkappa_{tttc}^r
\!+\!\dot{x}^c\varkappa_{tttb}^r
\!-\!3\dot{x}^c\dot{x}^b\varkappa _{tttt}^r
\right)
\!
\right) \\
& -\! \sum _{c\leq a}\ddot{x}_{cr}^i
\left(
\varkappa_{tt,c\leq a}^i
\!-\!\tfrac{2}{1+\delta_{ca}}
\left(
\dot{x}^a\varkappa_{tttc}^i
\!+\!\dot{x}^c\varkappa_{ttta}^i
\!-\!3\dot{x}^c\dot{x}^a\varkappa _{tttt}^i
\right)
\!
\right) \\
& -\sum _{a\leq b}\ddot{x}_{rb}^i
\left(
\!\varkappa _{tt,a\leq b}^i
\!-\!\tfrac{2}{1+\delta _{ab}}
\left(
\dot{x}^a\varkappa_{tttb}^i
\!+\!\dot{x}^b\varkappa _{ttta}^i
\!-\!3\dot{x}^b\dot{x}^a
\varkappa _{tttt}^i
\right)
\!
\right) \\
& +\!\ddot{x}_t^a
\left(
\varkappa _{ttt}^r
\!+\!\ddot{x}_{\dot{r}}^k\varkappa _{tttt}^k
\right)
\!+\!\ddot{x}_{tt}^a\varkappa_{tttt}^r\!
-\!\ddot{x}_{tr}^i
\left(
\varkappa_{ttta}^i
\!-\!2\dot{x}^a\varkappa_{tttt}^i
\right) \\
& +\!\ddot{x}_{tb}^a
\left(
\varkappa _{tttb}^r\!-\!2\dot{x}^b
\varkappa _{tttt}^r
\right) ,
\end{align*}
\begin{align*}
\varkappa_{ta}^r\!
& =\!\dot{x}^a\!
\left(
\varkappa _{tt}^r
\!+\!\ddot{x}_r^i\varkappa _{tttt}^i
\right)
\!+\!\ddot{x}^a
\left(
\varkappa _{ttt}^r
\!+\!\ddot{x}_{\dot{r}}^i\varkappa _{tttt}^i
\right)
\!+\!2\ddot{x}_t^a\varkappa_{tttt}^r\!-\!\ddot{x}_r^i
\left(
\varkappa_{ttta}^i\!-\!2\dot{x}^a\varkappa_{tttt}^i
\right) \\
&  +\!\ddot{x}_b^a\!
\left(
\varkappa_{tttb}^r
\!-\!2\dot{x}^b\varkappa_{tttt}^r
\right)
\!+\!\dot{x}^a
\tfrac{\partial }{\partial\ddot{x}^r}
\!-\!\ddot{x}_{\dot{r}}^i
\tfrac{\partial }{\partial\ddot{x}_a^i}
\!+\!2\tfrac{\partial }
{\partial\ddot{x}_{\dot{a}}^r}
\!-\!\ddot{x}_{\dot{r}\dot{c}}^i
\tfrac{\partial }
{\partial\ddot{x}_{a\dot{c}}^i}
\!-\!\ddot{x}_{\dot{r}}^i
\tfrac{\partial }
{\partial\ddot{x}_{t\dot{a}}^i}
\!+\!\ddot{x}_{\dot{b}}^a
\tfrac{\partial }
{\partial\ddot{x}_{t\dot{b}}^r}\\
& -\!\ddot{x}_{t\dot{r}}^i\!
\left(
\varkappa_{ttta}^i
\!-\!2\dot{x}^a\varkappa_{tttt}^i
\right)  \!-\!\ddot{x}_{\dot{r}c}^i
\left(
\!\varkappa_{tt,a\leq c}^i
\!-\!\tfrac{2}{1+\delta_{ac}}
\left(
dot{x}^c\varkappa_{ttta}^i
\!+\!\dot{x}^a\varkappa_{tttc}^i
\!-\!3\dot{x}^a\dot{x}^c\varkappa_{tttt}^i
\right)
\!
\right) \\
&  -\! \ddot{x}_{c\dot{r}}^i\!
\left(
\! \varkappa_{tt,c\leq a}^i
\! -\!\tfrac{2}{1+\delta_{ac}}
\left(
\dot{x}^c\varkappa_{ttta}^i
\!+\!\dot{x}^a\varkappa_{tttc}^i
\!-\!3\dot{x}^a\dot{x}^c\varkappa_{tttt}^i
\right)
\!
\right)  ,
\end{align*}
\begin{align*}
\varkappa _{a\leq b}^r\!
&  =\!\tfrac{1}{1+\delta _{ab}}
\left[
\ddot{x}^b
\tfrac{\partial }{\partial \ddot{x}_a^r}
\!+\!\ddot{x}^a
\tfrac{\partial }{\partial\ddot{x}_b^r}
\!+\!\ddot{x}_{\dot{r}}^i
\left(
\dot{x}^b
\tfrac{\partial }
{\partial\ddot{x}_{t\dot{a}}^i}
\!+\!\dot{x}^a
\tfrac{\partial }
{\partial \ddot{x}_{t\dot{b}}^i}
\!-\!\tfrac{\partial }
{\partial \ddot{x}_{a\dot{b}}^i}
\!-\!\tfrac{\partial }
{\partial \ddot{x}_{b\dot{a}}^i}
\right)
\right. \\
& +\ddot{x}_{\dot{c}}^a
\left(
\!\tfrac{\partial }
{\partial \ddot{x}_{b\dot{c}}^r}
\!-\!\dot{x}^b
\tfrac{\partial }
{\partial \ddot{x}_{t\dot{c}}^r}
\!
\right)
\!+\!\ddot{x}_{\dot{c}}^b
\left(
\!\tfrac{\partial }
{\partial \ddot{x}_{a\dot{c}}^r}
\!-\!\dot{x}^a
\tfrac{\partial }{\partial \ddot{x}_{t\dot{c}}^r}
\!
\right)
\!-\!\dot{x}^b
\left\{
\dot{x}^a
\left(
\varkappa _{tt}^r\!
+\!\ddot{x}_r^i\varkappa _{tttt}^i
\right)
\right. \\
&  \left.
+\ddot{x}^a
\left(
\varkappa _{ttt}^r\!
+\!\ddot{x}_{\dot{r}}^i\varkappa _{tttt}^i
\right)
\!+\!2\ddot{x}_t^a\varkappa _{tttt}^r
\!-\!\ddot{x}_r^i
\left(
\varkappa _{ttta}^i
\!-\!2\dot{x}^a\varkappa _{tttt}^i
\right)
\right\} \\
& +\ddot{x}_{c}^a
\left(
\varkappa _{tttc}^r
\!-\! 2\dot{x}^c\varkappa _{tttt}^r
\right)
\!+\!\dot{x}^b\varkappa _{ta}^r
\!+\!\dot{x}^a\varkappa _{tb}^r\\
& \!-\dot{x}^a
\left\{
\dot{x}^b
\left(
\varkappa _{tt}^r
\!+\!\ddot{x}_r^i\varkappa _{tttt}^i
\right)
\!+\!\ddot{x}^b
\left(
\varkappa _{ttt}^r
\!+\!\ddot{x}_{\dot{r}}^i\varkappa _{tttt}^i
\right)
\!+\!2\ddot{x}_t^b\varkappa _{tttt}^r
\right. \\
& \left.
-\ddot{x}_r^i
\left(
\varkappa _{tttb}^i
\!-\!2\dot{x}^b\varkappa _{tttt}^i
\right)
\!+\!\ddot{x}_c^b
\left(
\varkappa _{tttc}^r
\!-\!2\dot{x}^c
\varkappa _{tttt}^r
\right)
\right\} \\
&  \! +\ddot{x}_t^b
\left(
\varkappa _{ttta}^r
\!-\!2\dot{x}^a
\varkappa _{tttt}^r
\right)
\!+\!\ddot{x}_t^a
\left(
\varkappa _{tttb}^r
\!-\!2\dot{x}^b
\varkappa _{tttt}^r
\right) \\
& +\ddot{x}_c^a
\left(
\!\varkappa _{tt,c\leq b\!}^r
\!-\!\tfrac{2}{1+\delta _{cb}}
\left(
\dot{x}^b\varkappa _{tttc}^r
\!+\!\dot{x}^c\varkappa_{tttb}^r
\!-\!3\dot{x}^c\dot{x}^b
\varkappa _{tttt}^r
\right)
\!
\right) \\
& +\ddot{x}_{c}^b
\left(
\!\varkappa _{tt,c\leq a}^r
-\tfrac{2}{1+\delta _{ca}}
\left(
\dot{x}^a\varkappa _{tttc}^r
+\dot{x}^c\varkappa _{ttta}^r
-3\dot{x}^c\dot{x}^a\varkappa _{tttt}^r
\right)
\!
\right) \\
&  +\ddot{x}_c^b
\left(
\!\varkappa _{tt,a\leq c}^r
-\tfrac{2}{1+\delta _{ca}}
\left(
\dot{x}^a\varkappa _{tttc}^r
+\dot{x}^c\varkappa _{ttta}^r
-3\dot{x}^c\dot{x}^a
\varkappa _{tttt}^r
\right)
\!\right) \\
&  \left.
+\ddot{x}_c^a
\left(
\!\varkappa _{tt,b\leq c}^r
\!-\!\tfrac{2}{1+\delta _{cb}}
\left(
\dot{x}^b\varkappa _{tttc}^r
\!+\!\dot{x}^c\varkappa _{tttb}^r
\!-\!3\dot{x}^c\dot{x}^b
\varkappa _{tttt}^r
\right)
\!
\right)
\!
\right] \\
&  -\ddot{x}_r^i
\left(
\! \varkappa _{tt,a\leq b}^i
\!-\!\tfrac{1}{1+\delta _{ab}}
\left(
2\dot{x}^b\varkappa _{ttta}^i
\!+\!2\dot{x}^a\varkappa _{tttb}^i
\!-\!6\dot{x}^a\dot{x}^b
\varkappa _{tttt}^i
\right)
\!
\right)
\!+\!2\tfrac{\partial }
{\partial \ddot{x}_{\dot{a}\leq\dot{b}}^r},
\end{align*}
\[
\begin{array}
[c]{l}
\varkappa_{ttt}^r
=\frac{\partial }
{\partial \ddot{x}_t^r}
-\ddot{x}_{\dot{r}}^i\varkappa _{tttt}^i,
\smallskip\\
\varkappa_{tta}^r
=2\dot{x}^a\varkappa _{ttt}^r
+\ddot{x}^a\varkappa_{tttt}^r
+3\dot{x}^a\ddot{x}_{\dot{r}}^i\varkappa _{tttt}^i
-\ddot{x}_{\dot{r}}^i\varkappa_{ttta}^i
+\frac{\partial }
{\partial \ddot{x}_a^r}
+2\frac{\partial }
{\partial\ddot{x}_{t\dot{a}}^r},
\end{array}
\]
\begin{align*}
\varkappa_{t,a\leq b}^r
& =\tfrac{1}{1+\delta _{ab}}
\left\{
\dot{x}^b
\left(
\varkappa _{tta}^r
-\dot{x}^a\varkappa _{ttt}^r
-\ddot{x}^a\varkappa _{tttt}^r
\right)
+\dot{x}^a
\left(
\varkappa _{ttb}^r
-\dot{x}^b\varkappa _{ttt}^r
-\ddot{x}^b\varkappa _{tttt}^r
\right)
\right. \\
& +\ddot{x}^b
\left(
\varkappa _{ttta}^r
-2\dot{x}^a\varkappa _{tttt}^r
\right)
+\ddot{x}^a
\left(
\varkappa_{tttb}^r
-2\dot{x}^b\varkappa_{tttt}^r
\right)
+\dot{x}^b
\tfrac{\partial }
{\partial\ddot{x}_a^r}
+\dot{x}^a
\tfrac{\partial }
{\partial\ddot{x}_b^r}\\
&
\left.
+2\tfrac{\partial }
{\partial\ddot{x}_{a\dot{b}}^r}
+2\tfrac{\partial }
{\partial\ddot{x}_{b\dot{a}}^r}
\right\} \\
&  -\sum _{a\leq b}\ddot{x}_{\dot{r}}^i
\left(
\varkappa _{tt,a\leq b}^i
-\tfrac{2}{1+\delta_{ab}}
\left(
\dot{x}^b\varkappa _{ttta}^i
+\dot{x}^a\varkappa _{tttb}^i
-3\dot{x}^a\dot{x}^b\varkappa_{tttt}^i
\right)
\right)  ,
\end{align*}
\[
\begin{array}
[c]{rl}
\varkappa _{tttt}^r
= & \!\!\!
\frac{\partial }{\partial\ddot{x}_{tt}^r},
\smallskip\\
\varkappa _{ttta}^r
= & \!\!\!2\dot{x}^a\varkappa _{tttt}^r
+\frac{\partial }{\partial\ddot{x}_{ta}^r},
\smallskip\\
\varkappa _{tt,a\leq b}^r
= & \!\!\!\tfrac{2}{1+\delta _{ab}}
\left(
\dot{x}^b\varkappa_{ttta}^r
+\dot{x}^a\varkappa_{tttb}^r
-3\dot{x}^b\varkappa_{tttt}^r
\right)
+\frac{\partial }
{\partial\ddot{x}_{a\leq b}^r}.
\end{array}
\]

Hence for $n\geq 2$,
\[
\mathcal{D}^{(2)}
=\left\langle
\varkappa _{00}^r,
\varkappa _t^r,
\varkappa _a^r,
\varkappa _{tt}^r,
\varkappa _{ta}^r,
\varkappa _{a\leq b}^r,
\varkappa _{ttt}^r,
\varkappa _{tta}^r,
\varkappa _{t,a\leq b}^r,
\varkappa _{tttt}^r,
\varkappa _{ttta}^r,
\varkappa _{tt,a\leq b}^r
\right\rangle
\]
and for $n\!=\!1$,
$\mathcal{D}^{(2)}
=\left\langle \varkappa _{00},
\varkappa _t,
\varkappa _{tt},
\varkappa _{t1},
\varkappa _{11},
\varkappa _{ttt},
\varkappa _{tt1},
\varkappa _{t,11},
\varkappa _{tttt},
\varkappa _{ttt1},
\varkappa_{tt,11}
\right\rangle $.
\end{proof}

According to the formulas \eqref{K_sigma},
\eqref{T's}, \eqref{P's} the tensor field
$K^\sigma $ takes values in the vector
bundle $(p^{10})^\ast \bigwedge ^2T^\ast M^0
\otimes V(p^{10})$ and we can define
the curvature mapping
$\mathcal{K}\colon J^2(p^{21})
\to (p^{10})^\ast \bigwedge ^2T^\ast M^0
\otimes V(p^{10})$ by setting
$\mathcal{K}(j_\xi ^2\sigma )
=\left( K^\sigma \right) _\xi $.

\begin{theorem}
Every second-order differential invariant
$\mathcal{I}\colon J^2(p^{21})\to \mathbb{R}$
with respect to $\mathrm{Aut}^v(p)$ factors
uniquely through the curvature mapping
as follows:
$\mathcal{I}=\mathcal{\bar{I}}\circ \mathcal{K}$,
where
$\mathcal{\bar{I}}\colon(p^{10})^\ast
\bigwedge ^2T^\ast M^0\otimes V(p^{10})\to \mathbb{R}$
is an invariant smooth function under
the natural action of $\mathrm{Aut}^v(p)$, namely,
\begin{equation}
\Phi \cdot \eta
=\left(
\bigwedge ^2((\Phi^{(1)})^{-1})^\ast
\otimes(\Phi ^{(1)})_\ast
\right)
(\eta ),
\quad
\forall
\eta \in (p^{10})^\ast
\bigwedge ^2T^\ast M^0\otimes V(p^{10}).
\label{action}
\end{equation}
\end{theorem}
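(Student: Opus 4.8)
\emph{Plan.} The argument will combine the two facts already available in the paper: that second-order differential invariants are exactly the first integrals of the distribution $\mathcal{D}^{(2)}$ on $J^2(p^{21})$ (the remark preceding Lemma \ref{rank}), and that the curvature mapping $\mathcal{K}$ is equivariant for the $\mathrm{Aut}^v(p)$-actions. Throughout one works on the open dense subset of $J^2(p^{21})$ where $\mathcal{D}^{(2)}$, the map $\mathcal{K}$, and the natural distribution on the target bundle $E:=(p^{10})^\ast\bigwedge^2T^\ast M^0\otimes V(p^{10})$ all attain their generic rank.

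\emph{Step 1: equivariance of $\mathcal{K}$.} First I would check that $\mathcal{K}\circ(\Phi^{(2)})^{(2)}=\Phi\cdot\mathcal{K}$ for every $\Phi\in\mathrm{Aut}^v(p)$, where the right-hand action is \eqref{action}. By \eqref{torsion}, $K^\sigma$ differs from the torsion $T^\sigma$ only by $dt\wedge(H^\sigma\circ\varepsilon^{-1}\circ(L_{X^\sigma}J)^v)$; since $dt$ and the fundamental tensor $J$ are intrinsic to $J^1(\mathbb{R},M)$ and $X^\sigma$ transforms by \eqref{PhiXsigma}, this extra term, hence $K^\sigma$ itself, is $\Phi^{(1)}$-related to the corresponding object for $\Phi\cdot\sigma$ (alternatively, use Corollary \ref{CorollaryFunctoriality1}). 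Passing to the flows of $p$-vertical vector fields, $d\mathcal{K}$ sends each prolongation $(X^{(2)})^{(2)}$ to the infinitesimal generator of \eqref{action}; writing $\mathcal{E}$ for the distribution on $E$ spanned by these generators, one gets $d\mathcal{K}(\mathcal{D}^{(2)})\subseteq\mathcal{E}$, so $\mathcal{D}^{(2)}\subseteq(d\mathcal{K})^{-1}(\mathcal{E})$.

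\emph{Step 2: the dimension count.} Next I would show $\mathcal{K}$ is a surjective submersion and that the inclusion of Step 1 is an equality. Surjectivity and submersivity reduce, via \eqref{T's} and \eqref{P's}, to checking that the components $P^i_j$, $T^k_{ij}$ of $K^\sigma$ are functionally independent functions of the $2$-jet of $\sigma$ which can be assigned arbitrary values at a point; this is a direct (if long) computation disentangling which $2$-jet data actually enter $K^\sigma_\xi$. Granting it, $\dim\ker d\mathcal{K}=\dim J^2(p^{21})-\dim E=\tfrac{3}{2}n(n+1)(n+2)$, while $\operatorname{rank}\mathcal{E}=n(n+2)$ for $n>1$ (the $2n$ directions in which $\mathrm{Aut}^v(p)$ moves $\xi\in M^1$, plus the $n^2$ directions of the residual linear action on the fibre of $E$) and $\operatorname{rank}\mathcal{E}=2n$ for $n=1$ (the fibre being one-dimensional with trivial residual action). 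In either case $\dim(d\mathcal{K})^{-1}(\mathcal{E})=\dim\ker d\mathcal{K}+\operatorname{rank}\mathcal{E}$ equals $\tfrac12 n(3n^2+11n+10)$ for $n>1$ and $11$ for $n=1$, which by Lemma \ref{rank} is precisely $\operatorname{rank}\mathcal{D}^{(2)}$. Hence $\mathcal{D}^{(2)}=(d\mathcal{K})^{-1}(\mathcal{E})$ and, in particular, $\ker d\mathcal{K}\subseteq\mathcal{D}^{(2)}$.

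\emph{Step 3: factorization, invariance and uniqueness; main obstacle.} Let $\mathcal{I}\colon J^2(p^{21})\to\mathbb{R}$ be a second-order invariant. It is a first integral of $\mathcal{D}^{(2)}$, so $d\mathcal{I}$ annihilates $\mathcal{D}^{(2)}$ and a fortiori $\ker d\mathcal{K}$; since $\mathcal{K}$ is a surjective submersion, $\mathcal{I}$ is constant along the fibres of $\mathcal{K}$ and descends to a smooth $\bar{\mathcal{I}}\colon E\to\mathbb{R}$ with $\mathcal{I}=\bar{\mathcal{I}}\circ\mathcal{K}$, unique because $\mathcal{K}$ is onto. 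Invariance of $\bar{\mathcal{I}}$ under \eqref{action} follows by composing $\mathcal{I}\circ(\Phi^{(2)})^{(2)}=\mathcal{I}$ with a local section of $\mathcal{K}$ and using Step 1 together with surjectivity; conversely Step 1 shows that any \eqref{action}-invariant $\bar{\mathcal{I}}$ produces an invariant $\bar{\mathcal{I}}\circ\mathcal{K}$, so the correspondence is exactly as stated. The hard part is Step 2: establishing that $\mathcal{K}$ is a surjective submersion and pinning down $\operatorname{rank}\mathcal{E}$, i.e.\ understanding both the fibres of $\mathcal{K}$ and the $\mathrm{Aut}^v(p)$-action on $E$ well enough to match the numerology against Lemma \ref{rank}; once this is in place the remaining steps are formal.
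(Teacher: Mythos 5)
Your overall strategy (equivariance of $\mathcal{K}$, a dimension count against Lemma \ref{rank}, then descent) is reasonable, but its central step is not actually carried out, and as set up it proves less than the statement. Everything hinges on two quantitative claims you only assert: that $\mathcal{K}$ is a surjective submersion, and that $\operatorname{rank}\mathcal{E}=n(n+2)$ for $n>1$ (resp.\ $2$ for $n=1$). The first is indeed a direct computation, and the paper settles it by writing $\mathcal{K}$ explicitly in coordinates: $y^i_a\circ\mathcal{K}$ and $y^k_{ab}\circ\mathcal{K}$ are affine in the top-order jet coordinates $\ddot x^i_{t\dot a},\ddot x^i_{h\dot a},\ddot x^i_{\dot h\dot a}$, which simultaneously yields the connectedness of the fibres of $\mathcal{K}$ --- a fact you never mention but tacitly use when passing from ``$d\mathcal{I}$ annihilates $\ker d\mathcal{K}$'' to ``$\mathcal{I}$ is constant on the fibres''. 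The second claim is the real content, and your parenthetical justification ($2n$ base directions plus an $n^2$-dimensional residual orbit) is not a proof. Note that what your count actually needs is the upper bound $\operatorname{rank}\mathcal{E}\le n(n+2)$: for $n\ge 2$ this does follow from the soft observation that the generator of \eqref{action} at a point depends only on the $1$-jet of the $p$-vertical field and cannot move $t$; but for $n=1$ that bound gives $3$, and you must additionally verify that the residual action on the one-dimensional fibre is trivial (it is, by cancellation of the two determinant factors) --- otherwise the inclusion $\mathcal{D}^{(2)}\subseteq(d\mathcal{K})^{-1}(\mathcal{E})$ could be strict and you could not conclude $\ker d\mathcal{K}\subseteq\mathcal{D}^{(2)}$.

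Moreover, because your identification $\mathcal{D}^{(2)}=(d\mathcal{K})^{-1}(\mathcal{E})$ is obtained by comparing generic ranks, you only obtain $\ker d\mathcal{K}\subseteq\mathcal{D}^{(2)}$ on the open dense set fixed in your plan, whereas the theorem asserts a factorization with $\bar{\mathcal{I}}$ smooth on all of $(p^{10})^\ast\bigwedge^2T^\ast M^0\otimes V(p^{10})$; an extra limiting argument is needed at the degenerate points (e.g.\ where $K^\sigma_\xi=0$ or $\dot x^i=0$, where both $\mathcal{D}^{(2)}$ and $\mathcal{E}$ drop rank). The paper avoids both difficulties by never introducing $\mathcal{E}$: it computes $(X^{(2)})^{(2)}\left(y^i_j\circ\mathcal{K}\right)$ and $(X^{(2)})^{(2)}\left(y^k_{ij}\circ\mathcal{K}\right)$ explicitly, deduces that the prolongations of fields with vanishing $1$-jet at the base point span a constant-rank subdistribution $\mathcal{\tilde D}^{(2)}\subseteq\ker\mathcal{K}_\ast$ with $\dim\mathcal{\tilde D}^{(2)}=\tfrac{3}{2}n(n+1)(n+2)=\dim J^2(p^{21})-\dim\bigl((p^{10})^\ast\bigwedge^2T^\ast M^0\otimes V(p^{10})\bigr)$, hence $\ker\mathcal{K}_\ast=\mathcal{\tilde D}^{(2)}\subseteq\mathcal{D}^{(2)}$ at every point, and then descends using the affine (hence connected) fibres. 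To complete your proposal you should either supply the missing rank bound together with the degenerate-point argument, or switch to this pointwise identification of $\ker\mathcal{K}_\ast$.
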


\begin{proof}
The statement is an immediate consequence
of the following properties:

\begin{enumerate}
\item
The curvature mapping is a surjective submersion.

\item
The fibre $\mathcal{K}^{-1}(\eta )$ for every
$\eta \in \bigwedge ^2 T_{(t_0,x_0)}^\ast M^0
\otimes V_\xi (p^{10})$ is an affine subbundle
over $J^1(p^{21})$; in particular, the fibres
of $J^2(p^{21})$ are connected.

\item
If we define
\[
\mathcal{\tilde{D}}_{j_\xi ^2\sigma }^{(2)}
=\left\{
\begin{array}
[c]{ll}
\left\{
(X^{(2)})_{j_\xi ^2\sigma }^{(2)}
\in \mathcal{D}_{j_\xi ^2\sigma }^{(2)}
:j_{j_{t_0}^0\gamma }^1X=0
\right\}  ,
& \text{if }n=\dim M\geq 2\\
\left\{
(X^{(2)})_{j_\xi ^2\sigma }^{(2)}
\in \mathcal{D}_{j_\xi ^2\sigma}^{(2)}:
X_\xi ^{(1)}=0
\right\}  ,
& \text{if }n=\dim M=1
\end{array}
\right.
\]
then
$\ker(\mathcal{K}_\ast )_{j_\xi ^2\sigma }
=\mathcal{\tilde{D}}_{j_\xi ^2\sigma }^{(2)}$,
$\xi =j_{t_0}^1\gamma $.

\item
The curvature mapping is
$\mathrm{Aut}^v(p)$-equivariant
with respect to the natural actions, i.e.,
$\Phi \cdot \mathcal{K}(j_\xi ^2\sigma )
=\mathcal{K}(\Phi \cdot j_\xi ^2\sigma )$,
where the action on the left-hand side
is defined in \eqref{action}
and that on the right-hand side is given
as in the beginning of this section, i.e.,
$\Phi \cdot j_\xi ^2\sigma
=j_{\Phi ^{(1)}(\xi )}^2(\Phi ^{(2)}
\circ \sigma \circ (\Phi ^{(1)})^{-1})$.
\end{enumerate}

Coordinates are introduced in
$(p^{10})^\ast \bigwedge ^2T^\ast M^0\otimes
V(p^{10})$ by setting
\[
\eta =\left\{
y_i^j(\eta )
\left(
dt\wedge \omega ^i
\right)  _{(t_0,x_0)}
+\sum _{h<i}y_{hi}^j(\eta )
\left(
\omega^h\wedge\omega^i
\right) _{(t_0,x_0)}
\right\}
\otimes
\left(
\frac{\partial }{\partial \dot{x}^j
}\right) _\xi ,
\]
for every
$\eta \in \bigwedge ^2T_{(t_0,x_0)}^\ast
M^0\otimes V_\xi (p^{10})$.
The first and second properties directly
follow from the equations
of the curvature mapping, i.e.,
\begin{align*}
t\circ \mathcal{K}
& =t,
\;
x^i\circ \mathcal{K}=x^i,
\;
\dot{x}^i\circ \mathcal{K}
=\dot{x}^i,\\
y_a^i\circ \mathcal{K}
& =-\tfrac{1}{2}
\ddot{x}_{t\dot{a}}^i
-\tfrac{1}{2}
\dot{x}^h\ddot{x}_{h\dot{a}}^i
-\tfrac{1}{2}
\ddot{x}^h\ddot{x}_{\dot{h}\dot{a}}^i
+\ddot{x}_a^i
+\tfrac{1}{4}
\ddot{x}_{\dot{a}}^k\ddot{x}_{\dot{k}}^i,\\
y_{ab}^k\circ \mathcal{K}
& =-\tfrac{1}{2}
\ddot{x}_{a\dot{b}}^k
+\tfrac{1}{2}
\ddot{x}_{b\dot{a}}^k
-\tfrac{1}{4}
\ddot{x}_{\dot{a}}^h\ddot{x}_{\dot{h}\dot{b}}^k
+\tfrac{1}{4}
\ddot{x}_{\dot{b}}^h\ddot{x}_{\dot{h}\dot{a}}^k,
\; a<b.
\end{align*}

Moreover, from the formulas
\eqref{v}--\eqref{w_apunto_bpunto}
we deduce
\begin{align*}
(X^{(2)})^{(2)}
\left(
y_j^i\circ \mathcal{K}
\right)
& =\frac{\partial u^i}{\partial x^r}
\left(
y_j^r\circ \mathcal{K}
\right)
-\frac{\partial u^r}{\partial x^j}
\left(
y_r^i\circ \mathcal{K}
\right) ,\\
(X^{(2)})^{(2)}
\left(
y_{ij}^k\circ\mathcal{K}
\right)
& =\frac{\partial u^r}{\partial x^i}
\left(
y_{rj}^k\circ \mathcal{K}
\right)
+\frac{\partial u^k}{\partial x^r}
\left(
y_{ji}^r\circ\mathcal{K}
\right)
+\frac{\partial u^r}{\partial x^j}
\left(
y_{ir}^k\circ\mathcal{K}
\right)  .
\end{align*}
By evaluating these two formulas at $j_\xi ^2\sigma $,
we conclude $\mathcal{\tilde{D}}_{j_\xi ^2\sigma }^{(2)}
\subseteq \ker (\mathcal{K}_{\ast })_{j_\xi ^2\sigma}$
and from the Lemma \ref{rank} and the first item above
we have
\begin{align*}
\dim  \ker (\mathcal{K}_\ast )_{j_\xi ^2\sigma}
& =\dim  J^2(p^{21})-\dim
\left(
(p^{10})^\ast \bigwedge ^2T^\ast M^0\otimes V(p^{10})
\right)
\\
& =\tfrac{3}{2}n(n+2)(n+1)\\
& =\dim \mathcal{\tilde{D}}_{j_\xi ^2\sigma }^{(2)}.
\end{align*}
Finally, the fourth item above follows by using
the formulas \eqref{x_dot}--\eqref{Phixdot}
and the fact that
$T^{\Phi \cdot \sigma }
(\Phi ^{(1)}\cdot X,\Phi ^{(1)}\cdot Y)
=\Phi^{(1)}\cdot T^\sigma (X,Y)$,
$\forall X,Y\in \mathfrak{X}(M^1)$,
as follows from Corollary \ref{CorollaryFunctoriality1}.
\end{proof}

\begin{remark}
As the distribution $\mathcal{D}^{(2)}$ is involutive,
the number of functionally independent second-order
differential invariants is $\frac{1}{2}n^2(n-1)+1$
if $n\geq 2$, and $2$ if $n=1$. By passing
to the quotient, the isomorphism \eqref{isomorphism}
induces another isomorphism
\[
\begin{array}
[c]{l}
\iota _1\colon T^-(M^1)
\overset{\cong}{\longrightarrow}
\left.
(p^{10})^\ast TM^0
\right/ (
p^{10})_\ast T^0(M^1),
\smallskip \\
\iota _1
\left(
X_i^\sigma
\right)
=\dfrac{\partial }{\partial x^i}
\operatorname{mod}(p^{10})_\ast T^0(M^1).
\end{array}
\]
Moreover, as $M^0=\mathbb{R}\times M$, there is
a natural embedding $(p^\prime )^\ast
TM\hookrightarrow TM^0$ and pulling it back via
$p^{10}$ we obtain another embedding
$(p^\prime \circ p^{10})^\ast
TM\hookrightarrow (p^{10})^\ast TM^0$.
By composing this latter embedding
and the quotient map
\[
(p^{10})^\ast TM^0\to
\left.
(p^{10})^\ast TM^0
\right/
(p^{10})_\ast T^0(M^1),
\]
an isomorphism
$\iota _2\colon (p^\prime \circ p^{10})^\ast TM
\overset{\cong}{\longrightarrow}\left.
(p^{10})^\ast TM^0\right/ (p^{10})_\ast T^0(M^1)$
is deduced. From the formula \eqref{K_sigma}
it follows $i_{X^\sigma }K^\sigma
=-P_j^h\omega ^j\otimes \partial /\partial \dot{x}^h$,
and we define an endomorphism $\tilde{K}^\sigma
\colon(p^\prime \circ p^{10})^\ast TM
\to (p^\prime \circ p^{10})^\ast TM$ by setting
\[
\begin{array}
[c]{l}
\tilde{K}^\sigma
=\varepsilon ^{-1}\circ
\left.
i_{X^\sigma }K^\sigma
\right\vert _{T^-(M^1)}
\circ (\iota _1)^{-1}
\circ \iota _2,
\smallskip \\
\tilde{K}^\sigma
\left(
\dfrac{\partial }{\partial x^j}
\right)
=-P_j^h
\dfrac{\partial }{\partial x^h}.
\end{array}
\]
The coefficients of the characterestic polynomial
of $\tilde{K}^\sigma $ determine $n$ second-order
invariants. This fact was remarked for the first
time in \cite{Kosambi2}. If $n=1$ or $n=2$ then
these invariants together with the function $t$
exhaust a basis of second-order invariants,
but this is no longer true for $n\geq 3$.

Finally, we should also like to remark that
$\mathcal{K}(j^2_\xi \sigma )$ depends only
on $j^1_\xi (\tilde{K}^\sigma )$,
as follows from the following identities
among the components of the torsion tensor
field of the Chern connection:
\[
3T_{kj}^i=\frac{\partial P_j^i}{\partial\dot{x}^k}
-\frac{\partial P_k^i}{\partial\dot{x}^j}.
\]
Therefore, the Kosambi tensor field
$\tilde{K}^\sigma $ encodes all the relevant
information for KCC theory.
\end{remark}

\end{document}